\documentclass[12pt, reqno]{amsart}
\usepackage{amsmath, amsthm, amscd, amsfonts, amssymb, graphicx, color, mathrsfs}
\usepackage[bookmarksnumbered, colorlinks, plainpages]{hyperref}
\usepackage{pgfplots}
\usepackage{slashed}

\newtheorem{theorem}{Theorem}[section]
\newtheorem{lemma}[theorem]{Lemma}

\newtheorem{proposition}[theorem]{Proposition}
\newtheorem{corollary}[theorem]{Corollary}

\theoremstyle{definition}
\newtheorem{definition}[theorem]{Definition}

\theoremstyle{remark}
\newtheorem{remark}[theorem]{Remark}
\numberwithin{equation}{section}

\newcommand*\diff{\mathrm{d}}
\DeclareMathOperator{\supp}{supp}

\begin{document}
\setcounter{page}{1}

\title[Estimates for $S(m,g)$ calculus]{Kernel estimates for the Weyl-H\"ormander calculus, Weak (1,1) boundedness and other regularity properties}

\author[D. Cardona Sanchez]{Duv\'an Cardona}
\address{
  Duv\'an Cardona:
 \endgraf
  King Fahd University of Petroleum and Minerals 
   \endgraf
    College of Computing and Mathematics
    \endgraf
    Dhahran,  Saudi Arabia.
  \endgraf
  Pontificia Universidad Javeriana, 
  \endgraf
  Department of Mathematics
  \endgraf
  Bogot\'a-Colombia.
  \endgraf
  {\it E-mail address:} {\rm duvan.sanchez@kfupm.edu.sa; duvanc306@gmail.com}  
  }

 \author[J. Delgado]{Julio Delgado}
\address{
  Julio Delgado:
  \endgraf
  Departmento de Matematicas
  \endgraf
  Universidad del Valle
  \endgraf
  Cali-Colombia
  \endgraf
    {\it E-mail address} {\rm delgado.julio@correounivalle.edu.co}}

\author[M. Mart\'inez-Flores]{Manuel Mart\'inez-Flores}
\address{
  Manuel Mart\'inez-Flores
  \endgraf
  Department of Mathematics
  \endgraf
 Universidad del Valle de Guatemala, Guatemala
  \endgraf
  {\it E-mail address} {\rm mar21403@uvg.edu.gt, manuelalejandromartinezf@gmail.com}
  }

\thanks{ Manuel Mart\'inez Flores has been supported by the {\it{Liderazgo en Ciencias}} scholarship of the Universidad del Valle de Guatemala.  Duv\'an Cardona has been supported by the Department of Mathematics at KFUPM (King Fahd University of Petroleum and Minerals),  by the FWO  Odysseus  1  grant  G.0H94.18N:  Analysis  and  Partial Differential Equations, by the Methusalem programme of the Ghent University Special Research Fund (BOF)
(Grant number 01M01021) and has been supported by the FWO Fellowship
Grant No 1204824N of the Belgian Research Foundation FWO. He also has been supported by the Oberwolfach Leibniz Fellow of the Mathematical Institute of Research of Oberwolfach, MFO-Germany, Project F2511 (2026) and by the Department of Mathematics of the Pontificia Universidad Javeriana, Bogot\'a-Colombia. Julio Delgado has been supported by the Department of Mathematics at Universidad del Valle, Cali-Colombia, and by the Global Minds Grant: PDE and Control Theory of Ghent University, Belgium}

     \keywords{Pseudo-differential operators - Weyl-H\"ormander classes - Euclidean Harmonic Analysis}
    \subjclass[2010]{Primary 35S05; Secondary 47G30; 58J40.}

\begin{abstract}
   We provide kernel estimates for pseudo-differential operators in the $S(m,g)$ Weyl-H\"ormander calculus  classes. These new results lead to boundedness theorems like the weak (1,1) type for these classes and the boundedness of these operators from Hardy spaces to Lebesgue spaces $L^p$. We prove that under a suitable hypoelliticity condition on the weigh $m,$  one can get the weak (1,1) inequality for the Fefferman order established previously by the second author in analogy to the classical $L^p$-boundedness theorem due to Fefferman. Our approach makes an extension to $S(m,g)$ classes of previous estimates proved by \'Alvarez and Hounie, Nagase, Kumano-Go, etc, for the H\"ormander classes including new regularity properties in this framework from the Hardy space $H^p$ to $L^p$ and also on Lorentz spaces. We have identified that the {\it Uniform H\"ormander Condition} plays a fundamental role in some of our boundedness theorems.
\end{abstract} 

\maketitle
\tableofcontents
\allowdisplaybreaks

\section{Introduction}

\subsection{Overview} From the perspective of harmonic analysis, the boundedness of pseudo-differential operators is a fundamental problem that, depending on the class of symbols under consideration, requires a variety of techniques. For the classical {\it Kohn--Nirenberg classes}, boundedness properties can be reduced to those of {\it Calder\'on--Zygmund singular integral operators} \cite{CalderonZygmund1952}. For more sophisticated symbol classes, particularly those containing {\it oscillating Fourier multipliers} as fundamental examples, such as the $(\rho,\delta)$-{\it H\"ormander classes} \cite{Hormander1985III}, {\it Littlewood--Paley theory} and {\it complex interpolation techniques} became essential tools following the pioneering work of Fefferman \cite{Fefferman1973}.

The aim of this paper is to investigate the mapping properties of the {\it Weyl--H\"ormander calculus} associated to the $S(m,g)$-classes for suitable metrics $g$. These classes have been shown to be a versatile family of symbols generalising fundamental pseudo-differential calculi, indeed, heuristically one has that
\begin{equation*}
{\textnormal{ Kohn-Nirenberg }S^\mu\subset \textnormal{ H\"ormander } S^\mu_{\rho,\delta} \subset \textnormal{ Beals-Fefferman } S^{\Phi,\phi}\subset S(m,g) \textnormal{ classes},}
\end{equation*}
and also of particular interest for operators with discrete spectra 
\begin{equation*}
  {\textnormal{ Shubin classes } \Lambda^{\mu},\quad m=(1+|x|+|\xi|)^\mu \subset S(m,g)= \textnormal{Weyl-H\"ormander classes}.}
\end{equation*}
The Fefferman $L^p$-boundedness theorem for these classes was established by the second  author around 20 years ago, and our goal here is to obtain further estimates, including a weak $(1,1)$-boundedness result, by adapting the approach developed by \'Alvarez and Hounie \cite{alvarez-hounie}. It could be understood as an approach seeking to provide the mapping properties of the operators making use of suitable kernel estimates and having a replacement for the Calder\'on-Vaillancourt theorem, or in other words, the corresponding $L^2$-boundedness theorem. In our setting, a more general result regarding the $L^p$-boundedness for operators in the class $S(1,g)$ was first proved by Richard Beals \cite{Be,Be2}.   Since our results here extend those of Fefferman \cite{Fefferman1973},  we recall the following sharp $L^p$-estimates for the H\"ormander classes combining different versions, particularly from Fefferman \cite{Fefferman1973} and \'Alvarez and Hounie \cite{alvarez-hounie}. For simplicity, we present only the case $\delta\leq \rho.$
\begin{theorem}[$L^p$-bounds for H\"ormander classes]\label{FT}
Let $A:C^\infty(\mathbb{R}^n)\rightarrow\mathscr{D}'(\mathbb{R}^n)$ be a pseudo-differential operator with symbol $\sigma\in S^{-m}_{\rho,\delta}(\mathbb{R}^n\times {\mathbb{R}}^n ),$  $0\leq \delta\leq \rho\leq 1,$ $\delta<1.$ Then,
\begin{itemize}
    \item{\textnormal{(a)}} if $m=\frac{n(1-\rho)}{2},$ then $A$ extends to a bounded operator from $L^\infty(\mathbb{R}^n)$ to $ \mathrm{BMO}(\mathbb{R}^n)$ and also admits a bounded extension from the Hardy space $H^1(\mathbb{R}^n)$ to $L^1(\mathbb{R}^n)$. Moreover, for all $1<p<\infty,$ $A$ admits a bounded extension on $L^p(\mathbb{R}^n).$
   \item{\textnormal{(b)}} If $m\geq m_{p}:= n(1-\rho)\left|\frac{1}{p}-\frac{1}{2}\right|,$ then $A$ extends to a bounded operator on $ L^p(\mathbb{R}^n),$ for $1<p<\infty.$ 
   \item{\textnormal{(c)}} if $m=\frac{n(1-\rho)}{2},$ then $A$ extends to a bounded operator from $L^1(\mathbb{R}^n)$ to $ L^{1,\infty}(\mathbb{R}^n),$ or equivalently, $A$ is of weak (1,1) type.
\end{itemize}
\end{theorem}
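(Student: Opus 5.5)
The plan follows Fefferman and \'Alvarez--Hounie: prove kernel estimates for $A$, then feed them into a Calder\'on--Zygmund-type machinery whose $L^2$ input is the Calder\'on--Vaillancourt theorem.

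\textbf{Step 1: dyadic decomposition and kernel estimates.} Take a Littlewood--Paley partition $1=\psi_0(\xi)+\sum_{j\ge1}\psi(2^{-j}\xi)$ and set $\sigma_j=\sigma\psi_j$, with kernels $K_j(x,y)=\int e^{i(x-y)\xi}\sigma_j(x,\xi)\,d\xi$. Integrating by parts in $\xi$ with the symbol estimates of $S^{-m}_{\rho,\delta}$ gives
\begin{equation*}
|K_j(x,y)|\lesssim 2^{j(n-m)}(1+2^{j\rho}|x-y|)^{-N},
\end{equation*}
and, after differentiating in $y$, the same bound for $|\nabla_y K_j(x,y)|$ with an extra factor $2^{j}$. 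In addition, Plancherel in $y$ with weights $|x-y|^{2N}$ gives the $L^2$-type bounds
\begin{equation*}
\int |x-y|^{2N}|K_j(x,y)|^2\,dx\lesssim 2^{j(n-2m-2N\rho)},
\end{equation*}
and analogously for $\nabla_y K_j$. These weighted $L^2$ bounds are the key tool when $\rho<1$.

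\textbf{Step 2: the $L^2$ input.} For $\delta\le\rho$, $\delta<1$, the operator $A$ is bounded on $L^2$ for $m\ge0$ by Calder\'on--Vaillancourt; the case $\delta=\rho<1$ is its sharp form. For $m=n(1-\rho)/2$ we also need the gain $L^2\to L^{2/(1-\rho)}$-type smoothing coming from the order. In the Fefferman approach this is handled by showing $L^q\to L^2$ boundedness, with $1/q=1/2+m/n$, via Sobolev embedding of $(1-\Delta)^{-m/2}$ composed with an order-zero operator.

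\textbf{Step 3: weak $(1,1)$, part (c).} Make a Calder\'on--Zygmund decomposition $f=g+\sum b_k$ at height $\lambda$, with $b_k$ supported in cubes $Q_k$ of radius $r_k$. The key modification, due to Fefferman, is for cubes with $r_k<1$: enlarge $Q_k$ to $Q_k^*$ of radius $r_k^{\rho}$, not $2r_k$. The total measure stays controlled because $\sum r_k^{\rho n}$ is not directly summable, so instead one uses the $L^q\to L^2$ bound together with the $\rho$-enlarged exceptional set, exactly as in Fefferman's argument. Split $b_k$ further into low frequencies $2^{j}\le r_k^{-1}$, where the gradient kernel estimate and $b_k$'s mean zero apply, and high frequencies $2^{j}>r_k^{-1}$, where the weighted $L^2$ estimates off $Q_k^*$ and Cauchy--Schwarz apply. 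Sum the geometric series in $j$. Cubes with $r_k\ge1$ are handled by the rapid decay of $K_j$.

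\textbf{Step 4: the remaining parts.} For part (a), the $H^1\to L^1$ bound is the atomic version of the same estimate: apply $A$ to an atom and split at $Q^*$. The $L^\infty\to\mathrm{BMO}$ bound follows by duality, since the adjoint lies in the same class. The $L^p$ bounds follow by interpolation. For part (b), apply complex interpolation to the analytic family $A(1-\Delta)^{z}$ between $H^1\to L^1$ at order $n(1-\rho)/2$ and $L^2$ at order $0$, then use duality for $p>2$.

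The main obstacle is Step 3 at the critical order. The enlargement to radius $r_k^{\rho}$ must be balanced so that the high-frequency tail off $Q_k^*$ sums, using exactly $m=n(1-\rho)/2$. I would first check this exponent bookkeeping in the weighted $L^2$ bound with $N$ just above $n/2$.
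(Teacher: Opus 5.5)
The paper does not prove this theorem; it quotes it from Fefferman and \'Alvarez--Hounie. Section~\ref{section:main} runs the same scheme for $S(m,g)$, and your outline of (a) and (b) follows that scheme. There are two problems; the second is a genuine gap.

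\textbf{A fixable justification in Step 1.} The bound $\int|x-y|^{2N}|K_j(x,y)|^2\,dx\lesssim 2^{j(n-2m-2N\rho)}$ is an integral in $x$ at fixed $y$. Because $\sigma$ depends on $x$, Plancherel applied to $\sigma$ only gives the $dy$-integral. That is enough for $L^\infty\to\mathrm{BMO}$, but not for $H^1\to L^1$ or weak $(1,1)$. There are two ways to repair it.
\begin{itemize}
\item Apply Plancherel to the kernel of $A^*$, whose symbol is in the same class.
\item Use the \'Alvarez--Hounie device from the proof of Theorem~\ref{theo:kernel-estimate-first}. Write $(x-y)^\alpha K_j(\cdot,y)=\sum_{\beta\le\alpha}c_{\alpha\beta}\,\mathrm{Op}(D^\beta_\xi\sigma\,\chi_j)\,u^{\alpha-\beta}_y$, where $\chi_j\equiv1$ on $\supp\psi_j$ and $\widehat{u^{\gamma}_y}=e^{-iy\cdot\xi}D^\gamma_\xi\psi_j$. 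Then combine Calder\'on--Vaillancourt, with norm $\lesssim 2^{-j(m+\rho|\beta|)}$ uniformly in $j$, with Plancherel for $u^{\alpha-\beta}_y$.
\end{itemize}

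\textbf{The gap: Step 3 omits the only nontrivial point of the weak $(1,1)$ proof.} Your low/high frequency split is exactly the proof of
$\sup_{|y-z|<r}\int_{|x-z|>2r^{\rho}}|K(x,y)-K(x,z)|\,dx\le C$.
It therefore controls $Ab_k$ only off $Q_k^*$, and the high-frequency tail there is not the obstacle. The obstacle is the region $Q_k^*\setminus 2Q_k$, for two reasons.
\begin{itemize}
\item The exceptional set cannot be $\bigcup_kQ_k^*$, because $\sum_k r_k^{\rho n}$ is not bounded by $C\|f\|_{L^1}/\lambda$.
\item The $L^q\to L^2$ bound cannot be applied to $b_k$, which is only in $L^1$.
\end{itemize}
Your sentence appealing to ``Fefferman's argument'' is exactly where the mechanism has to go.

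Fefferman's mechanism keeps the exceptional set $\bigcup_k3Q_k$. For $r_k<1$, split $b_k=(b_k-b_k*\phi_k)+b_k*\phi_k$, with $\phi_k$ an approximate identity at the \emph{finer} scale $r_k^{1/\rho}$, not enlarged to $r_k^{\rho}$.
\begin{itemize}
\item The first piece is integrable off $3Q_k$ by the kernel condition at scale $R=r_k^{1/\rho}$, since $R^{\rho}=r_k$.
\item For $\tilde b=\sum_kb_k*\phi_k$, write $A=A_0(1-\Delta)^{-m/2}$ with $A_0$ of order $0$, bounded on $L^2$. Then $\|A\tilde b\|_{L^2}^2\lesssim\iint G_{2m}(x-y)|\tilde b(x)||\tilde b(y)|\,dx\,dy$. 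For $0<\rho<1$ the Bessel kernel satisfies $G_{2m}(u)\sim|u|^{-n\rho}$ near $0$.
\item Since $\phi_k*G_{2m}\lesssim(r_k^{1/\rho})^{-n\rho}=r_k^{-n}$ and $\|b_k\|_{L^1}\lesssim\lambda r_k^n$, this is $\lesssim\lambda\|f\|_{L^1}$, using Whitney-type cubes for the cross terms. Chebyshev then finishes the estimate.
\end{itemize}
This is precisely where $m=n(1-\rho)/2$ is used. Alternatively, invoke the \'Alvarez--Milman theorem (Theorem~\ref{theo:alvarez-milman}) with $\alpha=\rho$ and $\beta=\frac n2(1-\rho)$; this is how the paper obtains its own weak $(1,1)$ result.

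\textbf{The endpoint $\rho=0$.} Both routes need $\rho>0$. At $\rho=\delta=0$, which the statement allows, your exponent $q=2/(2-\rho)$ equals $1$, and the Sobolev embedding in Step~2 fails. So that endpoint is not covered by your argument, and it is not covered by the cited results either.
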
 
\subsection{Regularity properties of $S(m,g)$ classes}
The pseudo-differential operators considered in this work have symbols in the Weyl-H\"ormander classes $S(m^{-n\varepsilon},g).$ These classes are associated to the metric
$
  g_{(x,\xi)}(\diff x,\diff\xi)=m(x,\xi)^{-2}(\langle \xi \rangle^2 \diff x^2+\diff\xi^2),$ $x,\xi\in\mathbb{R}^n,$ 
and to  the weight
$
  m(x,\xi)=(a(x,\xi)+\langle \xi\rangle)^{\frac{1}{2}}.
$
Here, the symbol $a:=a(x,\xi),$ is the principal part of a second order differential operator $L,$ with coefficients of bounded derivatives and defined by
\begin{equation*}\label{ST}
  {Lf}=-\sum_{i,j}a_{ij}(x)\frac{\partial^2}{\partial{x_i\partial x_j}}f+\sum_{k}b_k(x)\frac{\partial}{\partial x_k}f+c(x)f,\,\,\, f\in C^\infty_{0}(\mathbb{R}^n).
\end{equation*}
We also assume for every $x\in\mathbb{R}^n,$ the matrix $A(x)=(a_{ij}(x))$ is a positive semi-definite matrix  of rank $r(x):=\textnormal{rank}(A(x))\geq r_0\geq 1.$ The coefficients $a_{ij}$ are  smooth functions which are uniformly bounded on $\mathbb{R}^n$, together with all their derivatives. 
Since there is a family of vector fields $X_j$ such that (see Subsection \ref{UHC:Subsec})
\begin{equation}\label{X:UHC}
    A(x) = \sum_{j=1}^k X_j^*(x)X_j(x),
\end{equation}
we will consider that the system $\mathbb{X}=\left\{X_j\right\}$ satisfies the H\"ormander Condition at step $r\in \mathbb{N}.$ 
Important examples  arise from operators of the form
\begin{equation*}\label{Ex1}
  L=-\sum_{j}{\mathcal{X}}_{j}^{*}{\mathcal{X}}_{j}+{\mathcal{X}}_0,\,\,\,L=-\sum_{j}{\mathcal{X}}_{j}^{*}{\mathcal{X}}_{j},
\end{equation*} where $\left\{{\mathcal{X}}_{i}:1\leq i\leq k\right\}$ is a system of H\"ormander vector fields. Such operators were introduced by H\"ormader in his celebrated paper {\em Hypoelliptic second order differential equations} (cf. \cite{Hormander1967a}) which as of today counts 1369 citations in MathSciNet and has opened intensive research in  multiple directions.
\subsubsection{$L^p$-regularity properties for $S(m,g)$-classes}Let us define
\begin{equation*}\label{q0}
  Q_0:=r_0+2(n-r_0),\,\,\varepsilon_0:=\frac{Q_0}{2n}-\frac{1}{2}. 
\end{equation*} It was proved by the second author the following Fefferman-type boundedness theorem. Below, the $\mathrm{BMO}$ space is adapted to a subelliptic distance associated to the weight $m=(a+\langle\xi\rangle)^\frac{1}{2},$ which, in the case where $a$ is an elliptic symbol, recovers the classical $\mathrm{BMO}$ space of John and Nirenberg; the Hardy space $H^1$ is the dual space of $\mathrm{BMO}.$

\begin{theorem}[$L^p$-bounds for  $S(m,g)$ classes \cite{Delgado2006,  Delgado2016}]\label{Th:Delgado:2005} Let $A:C^\infty(\mathbb{R}^n)\rightarrow\mathscr{D}'(\mathbb{R}^n)$ be a pseudo-differential operator with symbol $\sigma\in S(m^{-\beta},g).$ Then, the following statements hold. 
\begin{itemize}
    \item{\textnormal{(a)}} Let $\beta=n\varepsilon$ where  $\varepsilon_0\leq \varepsilon<\frac{Q_0}{2n}.$ Then  $A$ extends to a bounded operator from $L^\infty(\mathbb{R}^n)$ to $ \mathrm{BMO}(\mathbb{R}^n)$ and also admits a bounded extension from the Hardy space $H^1(\mathbb{R}^n)$ to $L^1(\mathbb{R}^n)$. 
    \item{\textnormal{(b)}}  Let $\beta=n\varepsilon$ where  $\varepsilon_0\leq \varepsilon<\frac{Q_0}{2n}.$
    Then, $A$ admits a bounded extension from $L^p(\mathbb{R}^n)$ to itself for all $1<p<\infty.$
   \item{\textnormal{(c)}}  For $0\leq \beta<n\varepsilon_0,$ $A$ admits a bounded extension from $L^p(\mathbb{R}^n)$ to itself provided that $$\beta\geq 2n\varepsilon_0\left|\frac{1}{2}-\frac{1}{p}\right|.$$ 
\end{itemize}
\end{theorem}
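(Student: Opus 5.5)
The plan follows the Fefferman scheme, transplanted to the geometry of the weight $m$. The basic tool is the Fefferman--Phong/Beals subelliptic estimate: since $\mathbb{X}$ satisfies the H\"ormander condition at step $r$, one has $m(x,\xi)\gtrsim\langle\xi\rangle^{1/(2r)}$, and more precisely $m$ dominates $\langle\xi\rangle^{1/2}$ in the $r_0$ directions where $A(x)$ is nondegenerate. This produces a quasi-distance $d$ on $\mathbb{R}^n$ and balls $B(x,\delta)$ of measure $|B(x,\delta)|\sim \delta^{Q_0}$ for small $\delta$: there are $r_0$ ``elliptic'' directions of size $\delta$ and $n-r_0$ degenerate directions of size $\delta^2$. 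The space $\mathrm{BMO}$ in (a) is defined over these balls.

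\textbf{Step 1 ($L^2$ bound).} By Beals' theorem, $S(1,g)$ operators are $L^2$-bounded. We also need a quantitative version: if $\sigma\in S(m^{-\beta},g)$ is localized by a Littlewood--Paley cutoff to $\langle\xi\rangle\sim 2^j$, the operator norm is controlled by the smallest value of $m^{-\beta}$ on that region.

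\textbf{Step 2 (kernel estimates).} Decompose $\sigma=\sum_j\sigma_j$ dyadically in $\xi$. We aim for two bounds on each piece $K_j$:
\begin{itemize}
\item a weighted $L^2$ (Plancherel) bound for $K_j(x,\cdot)$ off the ball $B(y,\delta)$, obtained by integrating by parts in $\xi$;
\item a bound on its regularity in $y$.
\end{itemize}
Both use that each $\xi$-derivative gains $m^{-1}$ and each $x$-derivative costs $\langle\xi\rangle m^{-1}$. Summing over $j$ with the threshold chosen as $2^j\sim\delta^{-2}$ should give the H\"ormander-type condition
\[
\int_{d(x,y)>C\delta}|K(x,y)-K(x,y')|\,dx\le C,\qquad d(y,y')<\delta .
\]
A size bound $\|K_j\|$ on the local part, with decay in the ball measure $\delta^{Q_0}$, should follow from the order $\beta=n\varepsilon\ge n\varepsilon_0=(Q_0-n)/2$. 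This is exactly the analogue of $n(1-\rho)/2$.

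\textbf{Step 3 (part (a)).} For $f\in L^\infty$ and a ball $B=B(x_0,\delta)$, split $f=f\chi_{B^*}+f\chi_{(B^*)^c}$.
\begin{itemize}
\item \emph{Local part.} For $\delta<1$, factor $A=A\,(1+L)^{\beta/2}\circ(1+L)^{-\beta/2}$, realised through the functional calculus on the $S(m,g)$ scale. The first factor is in $S(1,g)$, so it is $L^2$-bounded by Step 1. The second factor gains the measure $|B^*|^{1/2}$ via Sobolev embedding adapted to $Q_0$; this is where $\beta\ge n\varepsilon_0$ is used. Cauchy--Schwarz then gives an average bounded by $\|f\|_\infty$.
\item \emph{Nonlocal part.} Compare $Af_2(x)$ with $Af_2(x_0)$ using the kernel regularity from Step 2.
\item \emph{Large balls.} For $\delta\ge 1$, the negative order gives integrability of the kernel away from the diagonal.
\end{itemize}
The $H^1\to L^1$ bound follows by duality, because $A^*$ has a symbol in the same class. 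Alternatively one can test on atoms supported in $d$-balls, using the cancellation of the atom together with the same kernel estimates.

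\textbf{Step 4 (parts (b) and (c)).} For (b), interpolate between $L^2$ and the $L^\infty\to\mathrm{BMO}$ bound via Fefferman--Stein; the sharp-function argument works on the space of homogeneous type $(\mathbb{R}^n,d,dx)$. Then use duality for $p<2$. For (c), embed into an analytic family $T_z$ with symbol $\sigma\, m^{\beta-z}$. At $\operatorname{Re} z=0$ the family is $L^2$-bounded, and at $\operatorname{Re} z=n\varepsilon_0$ it maps $H^1\to L^1$ and $L^\infty\to\mathrm{BMO}$, with admissible growth in $\operatorname{Im} z$. Stein's complex interpolation then yields the line $\beta=2n\varepsilon_0|1/2-1/p|$.

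\textbf{Main obstacle.} The hardest step is Step 2 together with the local estimate in Step 3: matching the anisotropic geometry of $m$ to balls whose measure behaves like $\delta^{Q_0}$, uniformly in $x$ even though the rank $r(x)$ varies. To get this I would first try to work with $Q_0$ as a lower bound on the local dimension, using only the assumption $r(x)\ge r_0$.
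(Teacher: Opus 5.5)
The paper gives no proof of this theorem; it quotes it from \cite{Delgado2006, Delgado2016} as a Fefferman-type result, so I judge your outline on its own merits. Your overall architecture is the right one: Beals' $L^2$ bound, $L^\infty\to\mathrm{BMO}$ over anisotropic balls with $|B(x,\delta)|\asymp\delta^{Q_0}$, Fefferman--Stein interpolation plus duality for (b), and the analytic family $\sigma m^{\beta-z}$ for (c). Step~4 is fine once (a) is known. The gap is in Steps~2--3, at the critical order $\beta=n\varepsilon_0=(Q_0-n)/2=(n-r_0)/2$.

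\textbf{The kernel condition you aim for is false at this order.} You want a H\"ormander-type condition whose excluded region is comparable to the ball (size $\delta$ along the range of $A$, $\delta^2$ along its kernel). To see why it fails, cut with $\varphi(m(x_0,\xi)/t)$.
\begin{itemize}
\item On $m\asymp t$ one has $|\xi'|\lesssim t$ and $|\xi''|\lesssim t^2$, and the $\xi$-support has volume $\lesssim t^{Q_0}$ (Lemma~\ref{lem:phase-space-size} with $\kappa=1$).
\item $g$ gives $\xi$-smoothness only at scale $t$, in \emph{every} direction. So $K_t(x,\cdot)$ is spread at Euclidean scale $t^{-1}$, with $\|K_t(x,\cdot)\|_{L^1}\lesssim t^{-\beta+(Q_0-n)/2}=O(1)$.
\item There are $\asymp\log(1/\delta)$ dyadic $t$ with $\delta^{-1}\lesssim t\lesssim\delta^{-2}$. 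For these, $t^{-1}\gg\delta^2$, so $K_t$ lies outside $B(x_0,C\delta)$ in the degenerate directions.
\item A shift of size $\delta^2$ along $\ker A(x_0)$ changes the phase by $\delta^2|\xi''|\sim\delta^2t^2\gtrsim1$, so it does not smooth $K_t$. Each such band contributes $\asymp1$, and the total is of order $\log(1/\delta)$.
\end{itemize}
This is exactly the failure of the standard H\"ormander condition for Fefferman's multiplier $e^{i|\xi|^{1-\rho}}|\xi|^{-n(1-\rho)/2}$. For instance, take $A=\mathrm{diag}(I_{r_0},0)$. Then the multiplier $\psi_0(\xi'/|\xi''|^{1/2})\,e^{i|\xi''|^{1/2}}\langle\xi''\rangle^{-(n-r_0)/4}$ (cut off near $\xi''=0$) lies in $S(m^{-n\varepsilon_0},g)$ and behaves this way. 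The mechanism is local, so it is not tied to this model.

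\textbf{Your local Sobolev step cannot compensate.}
\begin{itemize}
\item If $B^*\sim B$, the local part is trivial from $L^2$-boundedness. So $\beta\ge n\varepsilon_0$ is not used there, contrary to your claim, and everything rests on the false kernel condition.
\item If you enlarge $B^*$ to the Euclidean ball of radius $C\delta$, as the pieces $K_t$ require, use the $Q_0$-adapted embedding $1/q-1/2=\beta/Q_0$ (sharp in the model case). It gives $|B|^{-1/2}|B^*|^{1/q}\asymp\delta^{(n-Q_0)/2+n\beta/Q_0}$. This is bounded only for $\beta\ge\frac{Q_0}{n}\,n\varepsilon_0>n\varepsilon_0$, so you lose the sharp order.
\end{itemize}

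\textbf{The missing idea is Fefferman's frequency splitting inside the oscillation estimate.} Take $B$ of radius $\delta<1$ centred at $x_0$. Write $A=A_{\mathrm{lo}}+A_{\mathrm{hi}}$ using the cutoff $\chi(\delta\,m(x,\xi))$, which lies in $S(1,g)$ uniformly in $\delta$ because $m$ is a regular weight.
\begin{itemize}
\item $A_{\mathrm{lo}}$ acts on all of $f$. Moving $x$ inside $B$ costs $\delta|\xi'|+\delta^2|\xi''|+\delta\langle\xi\rangle/m\lesssim t\delta$ on $m\asymp t$. Hence the oscillation of $A_{\mathrm{lo}}f$ on $B$ is $\lesssim\sum_{t\lesssim\delta^{-1}}t\delta\,\|f\|_\infty\lesssim\|f\|_\infty$. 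This uses regularity of $K$ in $x$, not in $y$ as you wrote; the $y$-version is the one relevant to $H^1\to L^1$.
\item $A_{\mathrm{hi}}$ has symbol in $\delta^{\beta}S(1,g)$. Set $f_1=f\chi_{B^*}$, with $B^*$ the Euclidean ball of radius $C\delta$. Then $|B|^{-1/2}\|A_{\mathrm{hi}}f_1\|_{L^2}\lesssim\delta^{-Q_0/2}\delta^{\beta}\delta^{n/2}\|f\|_\infty$. This is bounded precisely when $\beta\ge(Q_0-n)/2=n\varepsilon_0$; this, not a Sobolev embedding, is where the critical order enters.
\item $A_{\mathrm{hi}}(f-f_1)$ is controlled by $\int_{|x-y|>C\delta}|K_t(x,y)|\,dy\lesssim(t\delta)^{n/2-N}$, which is summable over $t\gtrsim\delta^{-1}$.
\end{itemize}
The threshold sits at $|\xi'|\sim\delta^{-1}$ in the elliptic directions but $|\xi''|\sim\delta^{-2}$ in the degenerate ones. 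A dyadic decomposition in $\langle\xi\rangle$ cut at $2^j\sim\delta^{-2}$ therefore cannot implement it; you must cut along level sets of $m$.

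\textbf{Two smaller corrections.}
\begin{itemize}
\item The $L^2$ norm of a localized piece is controlled by the \emph{largest} value of $m^{-\beta}$ on its support, not the smallest. On $\langle\xi\rangle\asymp2^j$ this is $2^{-j\beta/2}$.
\item $m\ge\langle\xi\rangle^{1/2}$ holds trivially, with $m\asymp\langle\xi\rangle$ along the range of $A$. No subelliptic estimate or H\"ormander condition enters the weight.
\end{itemize}
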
 
\subsubsection{Open questions}
Nevertheless, Theorem \ref{Th:Delgado:2005} leaves open the question about whether the weak (1,1) inequality holds for the classes $S(m^{-\tau},g) $ and for which sharp order $\tau$.  Moreover, it is also an open problem to determine if additional properties are required for the weight  $m=(a+\langle\xi\rangle)^\frac{1}{2}.$  Since $a$ has Kohn-Nirenberg order $m=2,$ one indeed has the estimate $m\lesssim \langle \xi\rangle.$ However, in order to have better control of Littlewood-Paley components, we will consider the following assumption, namely, that the following {\it hypoellipticity } condition must hold
\begin{equation}\label{kappa:1:2}
    \exists \kappa\in [1,2], \quad \langle \xi\rangle^{\frac{\kappa}{2}}\lesssim m.
\end{equation}Note that this particularly condition holds when $a\geq 0,$ is such that 
\begin{equation}\label{hypoellipticity}
    \exists \kappa'\in [0,2], \quad \langle \xi\rangle^{\kappa'}\lesssim a.
\end{equation}The case $\kappa'=2$ in the previous assumption \eqref{hypoellipticity} is exactly the ellipticity condition on $a:=a(x,\xi),$ and the non-superfluous case corresponds to the range $\kappa'\in [1,2),$ since for $\kappa'\in [0,1),$ one would have $\langle\xi\rangle\lesssim m,$ that is, \eqref{kappa:1:2} holds for $\kappa=1.$ 
In this paper with the goal of having a better understanding of these questions, we have proved the following theorem.
\subsubsection{The weak (1,1) type of operators}
Here we recall that a collection of vector fields satisfies the H\"ormander Condition at step $r\in \mathbb{N}$ if these vectors together with their commutators up to length $r$ span $\mathbb{R}^n$ at each point.
\begin{theorem}\label{theo:weak-(1,1):intro}
  Let $\sigma \in S(m^{-\tau},g)$ and  assume that for some $\kappa\in [1,2],$ the weight $m$ satisfies the condition \eqref{kappa:1:2}. Let us consider that the system $\mathbb{X}=\{X_j:1\leq j\leq k\}$ in (\ref{X:UHC}) satisfies the {\it{Uniform H\"ormander Condition (UHC)}} at step $r$. Assume that the following order inequality
  \begin{equation*}\label{eq:kernel-estimate-weak-R<1:intro}     
      \tau \ge \max\left\{ \frac{n(4-\kappa^2)-r_0(4-2\kappa)}{2\kappa^2}, \frac{n}{\kappa}\left(1-\frac{\kappa}{2r}\right)\right\}.
  \end{equation*}
  holds. Then, the pseudo-differential operator $\sigma(X, D)$ is bounded from $L^1(\mathbb{R}^n)$ to $L^{1,\infty}(\mathbb{R}^n).$
\end{theorem}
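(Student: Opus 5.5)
We follow the Calderón--Zygmund strategy of Fefferman and of Álvarez--Hounie, adapted to the weight $m$ and to the metric $g$. The proof rests on two ingredients. The first is an $L^2$ (or $L^p$) bound for $\sigma(X,D)$, which we import from Theorem \ref{Th:Delgado:2005}. The second is a kernel estimate of Hörmander type that is uniform with respect to a scale $R$ and is quantified through the condition \eqref{kappa:1:2}.

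\textbf{Step 1: Calderón--Zygmund decomposition.} Given $f\in L^1$ and $\lambda>0$, we write $f=g+\sum_Q b_Q$, where $|g|\lesssim\lambda$, each $b_Q$ is supported in a cube $Q$ of side $\ell_Q$, $\int b_Q=0$, and $\sum|Q|\lesssim \|f\|_1/\lambda$. The good part is handled by $L^2$-boundedness. Since $\tau\ge n\varepsilon_0$ follows from the first term in the maximum after comparing with $Q_0$, Theorem \ref{Th:Delgado:2005}(b) gives $L^2$ boundedness. Indeed, $m^{-\tau}$ is bounded, so $S(m^{-\tau},g)\subset S(1,g)$ and Beals' theorem applies directly.

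\textbf{Step 2: the bad cubes, split by size.} For the bad part we treat large and small cubes differently.

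For cubes with $\ell_Q\ge1$, we remove the enlarged cube $Q^*$ and use the decay of the kernel $K(x,y)$ away from the diagonal. Integrating by parts in $\xi$ with respect to the metric $g$ gains $m/\langle\xi\rangle$ per derivative, hence at least $\langle\xi\rangle^{-(1-\kappa'')}$. This yields $\sup_y\int_{|x-y|>1}|K(x,y)|\,dx<\infty$, so $\|\sigma(X,D)b_Q\|_{L^1((Q^*)^c)}\lesssim\|b_Q\|_1$.

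For cubes with $\ell_Q=R<1$, we follow Álvarez--Hounie. We choose a dilated exceptional set $\tilde Q$ of radius $R^{\theta}$, with $\theta<1$ adapted to $\kappa$; it is natural to take $\theta=\kappa/2$, since \eqref{kappa:1:2} gives $m\gtrsim\langle\xi\rangle^{\kappa/2}$, so the $g$-uncertainty scale at frequency $\langle\xi\rangle$ is $m/\langle\xi\rangle\le\langle\xi\rangle^{\kappa/2-1}$. The measure of $\bigcup\tilde Q$ is controlled by $\sum R^{n\theta}$, which is not directly bounded by $\sum|Q|$. Therefore, following Fefferman, we instead use the $L^2$ estimate on the averaged part. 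Concretely, we decompose $\sigma=\sum_j\sigma_j$ by a Littlewood--Paley partition $\langle\xi\rangle\sim2^j$. For $2^j\le R^{-1}$ we exploit the cancellation $\int b_Q=0$ through the $y$-gradient of $K_j$. For $2^j>R^{-1}$ we use $L^2$ bounds on $\tilde Q$ and decay off $\tilde Q$.

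\textbf{Step 3: kernel estimates (the main obstacle).} The central lemma is a uniform Hörmander-type estimate
$$\sup_{R<1}\sup_{|y-y'|<R}\int_{|x-y|>2R^{\theta}}|K(x,y)-K(x,y')|\,dx<\infty,$$
together with $L^2\to L^2$ bounds of order $R^{n(1-\theta)/2}$ needed to handle $\tilde Q$. To obtain the $L^2$ gains, we estimate $\|\sigma_j\|$ through the symbolic calculus: $\sigma_j\in S(m^{-\tau},g)$ localized at scale $2^j$, with $m^{-\tau}\lesssim2^{-j\kappa\tau/2}$. The volume counting uses the subelliptic geometry, and this is where the UHC at step $r$ enters. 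Balls of the metric induced by $m$ have volume comparable to $\delta^{Q}$ with $Q$ between $Q_0$-type exponents, and the UHC makes the ball-box comparison uniform in $x$, giving a lower volume bound $\gtrsim\delta^{n}\cdot\delta^{-(n)(1-1/r)}$-type.

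Balancing the Plancherel count $2^{jn(\ldots)}$ against the decay $2^{-j\kappa\tau/2}$ produces the two thresholds in the maximum. The first, $\frac{n(4-\kappa^2)-r_0(4-2\kappa)}{2\kappa^2}$, comes from the $L^2$-volume balance using the rank $r_0$ of the elliptic directions. The second, $\frac n\kappa(1-\frac{\kappa}{2r})$, comes from the pointwise/Hörmander-condition integration over the subelliptic annuli using step $r$.

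I expect this lemma, specifically the uniformity in $x$ of the volume estimates (where the UHC is indispensable) and the bookkeeping of the exponents, to be the hardest part. Once it is established, summing over $Q$ and $j$ and applying Chebyshev gives $|\{|\sigma(X,D)f|>\lambda\}|\lesssim\|f\|_1/\lambda$.
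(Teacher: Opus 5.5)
There is a genuine gap, located where the second term of the maximum comes from. The paper does not redo the Calder\'on--Zygmund decomposition; it checks the three hypotheses of Theorem \ref{theo:alvarez-milman}. In your Step 2 for small cubes you correctly observe that $\bigcup\tilde Q$ cannot be excised. The replacement you propose (``$L^2$ bounds on $\tilde Q$'', ``$L^2\to L^2$ bounds of order $R^{n(1-\theta)/2}$'') cannot work, because $b_Q$ is controlled only in $L^1$, so no $L^2\to L^2$ estimate can be applied to it. What this mechanism needs is an $L^q\to L^2$ bound with $\frac1q=\frac12+\frac\beta n$ and $\beta\ge\frac n2(1-\alpha)$. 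Here $\alpha$ is the exponent in the Euclidean condition $\int_{|x-z|>cR^\alpha}|k(x,y)-k(x,z)|\,dx\le C$ for $|y-z|<R<1$. The paper proves this bound in Theorem \ref{theo:Lq-L2}. By \eqref{kappa:1:2}, $\mathfrak{B}^{s}\in S(m^{2s/\kappa},g)$, so one writes $\sigma(x,D)=[\sigma(x,D)\mathfrak{B}^{s}]\,\mathfrak{B}^{-s}$. The bracket is $L^2$-bounded when $\tau\ge 2s/\kappa$, and $\mathfrak{B}^{-s}:L^q\to L^2$. You never state this smoothing estimate, so your argument has no source for the condition $\tau\ge\frac{2}{\kappa}\beta$. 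Attributing $\frac{n}{\kappa}\bigl(1-\frac{\kappa}{2r}\bigr)$ to ``integration over subelliptic annuli'' does not supply a mechanism.

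Second, you state your kernel lemma for Euclidean $|y-y'|<R$ with exclusion radius $2R^{\kappa/2}$, and nothing in your sketch supports that exponent. The paper proves the $R^{\kappa/2}$ estimate (Theorem \ref{theo:kernel-estimate-first}) only for $\varrho(y,z)<R$. There one splits $z-y=\Theta'+\Theta''$ with $|\Theta'|\lesssim R$ and $|\Theta''|\lesssim R^2$. This gives $|(z-y)\cdot\xi|\lesssim tR$ on $\{m(z,\xi)\asymp t\}$, even though $|\xi''|$ can be of size $t^{2/\kappa}$. For Euclidean $|y-z|<R$ you only get $Rt^{2/\kappa}$. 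The cancellation regime then stops at $t\approx R^{-\kappa/2}$ instead of $t\approx R^{-1}$, and the same argument yields a larger exclusion radius.

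The UHC enters precisely in passing to Euclidean distance. By Lemma \ref{lem:distance-comparison}, $|y-z|<R$ gives $\varrho(y,z)\lesssim R^{1/r}$, hence $\alpha=\kappa/(2r)$, $\beta=\frac n2\bigl(1-\frac{\kappa}{2r}\bigr)$, and the second threshold $\frac{2}{\kappa}\beta$. It is not used in the volume counting: Lemma \ref{lem:phase-space-size} uses only $r_0$ and $\kappa$ and produces the first threshold. Your account is also internally inconsistent. If your Euclidean lemma with $\theta=\kappa/2$ held, then $\alpha=\kappa/2$ in Theorem \ref{theo:alvarez-milman} would give the threshold $\frac n\kappa\bigl(1-\frac\kappa2\bigr)$, which does not involve $r$ at all.

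Two minor points. First, $\xi$-derivatives of $\sigma\in S(m^{-\tau},g)$ gain $m^{-1}$, not $m/\langle\xi\rangle$. Second, $L^2$-boundedness holds for every $\tau\ge0$ by Theorem \ref{Delgado:Theorem:2016} with $p=2$. So you do not need $\tau\ge n\varepsilon_0$, and it does not always follow from the first term, which vanishes at $\kappa=2$.
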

\subsubsection{Boundedness on Hardy spaces}
As for the boundedness properties from the Hardy space $H^p_\varrho,$ adapted to a suitable  we prove the following regularity result. Below $Q$ corresponds to the homogeneous dimension associated to the system of vector fields $\mathbb{X}$, see Remark \ref{Q:definition}.
\begin{theorem}\label{Hardy:Hp:Lp:intro}  Let us consider that the system $\mathbb{X}=\{X_j:1\leq j\leq k\}$ in (\ref{X:UHC}) satisfies the {\it{Uniform H\"ormander Condition (UHC)}} at step $r$. 
    Let $\sigma \in S(m^{-\tau},g)$  and let 
    \begin{equation*}
        \frac{n}{2}\left(1-\frac{n\kappa}{2Q}\right) \le \beta. 
    \end{equation*}
    Assume that 
    \begin{equation*}
        \tau \ge \max \left\{\frac{n(4-\kappa^2)-r_0(4-2\kappa)}{2\kappa^2},  \frac{2}{\kappa}\beta\right\}.
    \end{equation*}
     Then, $\sigma(x, D)$ is bounded from $H^p_\varrho$ to $L^p$ for $p_0\le p\le1,$ always that
     \begin{equation*}
         \frac{1}{p_0}=\frac{1}{2} + \frac{\beta Q(1/n+\kappa/4) }{Q+\beta  Q\kappa /2 - n\kappa/2  }, \quad \textnormal{if } \beta<\frac{n}{2};
     \end{equation*}
     \begin{equation*}
         \frac{1}{p_0}=\frac{1}{2} + \frac{ Q(1/2+n\kappa/8) }{Q+n  Q\kappa /4 - n\kappa/2  }, \quad \textnormal{if }  \beta\ge\frac{n}{2}.
     \end{equation*}
     If $\kappa=2$ and $Q=n$, then $\sigma(x, D)$ is bounded from the standard Hardy space $H^p_\varrho=H^p$ to $L^p$ for all $p$ in the range $p_0< p\le1,$ and 
     \begin{equation*}
         p_0=\frac{n}{n+1}.
     \end{equation*}
\end{theorem}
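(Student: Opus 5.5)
The plan is the atomic-decomposition method: it is enough to show that $\|\sigma(x,D)a\|_{L^p}\le C$ uniformly over $H^p_\varrho$-atoms $a$. Here an atom is supported in a ball $B=B_\varrho(x_0,R)$ for the subelliptic distance $\varrho$ attached to $\mathbb{X}$, satisfies $\|a\|_{L^2}\le |B|^{1/2-1/p}$, and has vanishing moments up to the order required for $p\ge p_0$. By the UHC, $|B_\varrho(x,R)|\asymp R^{Q}$ for $R\le1$ and $\asymp R^n$ for $R\ge1$ (Remark \ref{Q:definition}).

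\emph{The $L^2$ input.} The first hypothesis, $\tau\ge \frac{n(4-\kappa^2)-r_0(4-2\kappa)}{2\kappa^2}$, is used to make $\sigma(x,D)$ and the auxiliary operators $\sigma(x,D)\langle D\rangle^{\kappa\gamma}$ bounded on $L^2$ for suitable $\gamma\ge0$. This follows from the Beals $L^2$ theorem for $S(1,g)$ together with \eqref{kappa:1:2}, which converts powers of $\langle\xi\rangle$ into powers of $m$: $\langle\xi\rangle^{\kappa s/2}\lesssim m^{s}$. With this, the part of $\sigma(x,D)a$ near the atom is handled by H\"older's inequality. Write $B^*$ for a dilate of $B$ of radius $R^{\gamma_0}$; the exponent $\gamma_0\le1$ (for $R<1$) is chosen depending on $\beta$ and $\kappa$, as in \'Alvarez--Hounie. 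Then
\[
\|\sigma(x,D)a\|_{L^p(B^*)}\le |B^*|^{1/p-1/2}\|\sigma(x,D)a\|_{L^2}.
\]
For large balls ($R\ge1$) this is immediately $\lesssim 1$. For small balls the bare $L^2$ bound loses a power of $R$, so I would instead use $\sigma(x,D)=\bigl(\sigma(x,D)\langle D\rangle^{2\beta}\bigr)\langle D\rangle^{-2\beta}$. The condition $\tau\ge 2\beta/\kappa$ makes the first factor $L^2$-bounded, and a Sobolev/Hardy--Littlewood--Sobolev estimate gains $\|\langle D\rangle^{-2\beta}a\|_{L^2}\lesssim \|a\|_{L^q}\lesssim |B|^{1/q-1/p}$. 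Matching these exponents fixes $\gamma_0$.

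\emph{The kernel estimates away from the atom.} Off $B^*$, I would use the kernel estimates proved earlier in the paper for $S(m^{-\tau},g)$. Decompose $\sigma=\sum_j\sigma_j$ dyadically in $\xi$, with $\langle\xi\rangle\sim2^j$, and obtain for each piece:
\begin{enumerate}
\item a weighted $L^2$ bound $\|\,|x-y|^{N}K_j(\cdot,y)\|_{L^2}$, via integration by parts in $\xi$ using $g$, where each derivative gains $m^{-1}\lesssim 2^{-j\kappa/2}$;
\item a Lipschitz/Taylor bound in $y$, through the vanishing moments of $a$ and the regularity of $K$ measured along the vector fields $X_j$ (UHC).
\end{enumerate}
Split the sum at $2^{j}\sim R^{-\theta}$ for an appropriate $\theta$. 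The low frequencies use the moment cancellation, gaining $(2^jR)^{N+1}$; the high frequencies use decay in $|x-x_0|$ outside $B^*$. Summing the resulting geometric series, and using H\"older on the annuli $2^{k}B^*\setminus2^{k-1}B^*$, gives $\int_{(B^*)^c}|\sigma(x,D)a|^p\lesssim1$ exactly when $p\ge p_0$. The two formulas for $1/p_0$ correspond to whether the gain from $\beta$ saturates at $\beta=n/2$; beyond that point the $L^2$ trade-off cannot be improved. This is also where the lower bound $\beta\ge \frac n2(1-\frac{n\kappa}{2Q})$ enters, guaranteeing $\gamma_0\le1$.

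\emph{The main obstacle} is this last balancing step. The exponents coming from $|B|\sim R^Q$, the conversion $\kappa$ between $m$ and $\langle\xi\rangle$, and the dyadic cutoff all have to be reconciled so that the near and far estimates are uniform in $R$. I would first fix $\gamma_0$ from the near estimate, then solve for the smallest admissible $p$ in the far estimate, and check that this reproduces $p_0$.

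For the final statement, $\kappa=2$ and $Q=n$ reduce everything to the classical atoms and balls. The lower bound on $\beta$ then vanishes, the $L^2$ step uses only the boundedness of $\sigma(x,D)$ itself, and one cancellation moment gives the far estimate for $p>n/(n+1)$, the classical endpoint for first-order moment cancellation.
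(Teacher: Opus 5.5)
Your outline has the same architecture as the paper's proof: atoms; a region of size $R^\gamma$ around the atom handled by H\"older and an $L^q$--$L^2$ bound; dyadic annuli handled by kernel-difference estimates and $\int b=0$; then balancing $\gamma$. However, two steps are wrong as stated.

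\textbf{The Bessel factorization has the wrong power.} Since $\langle\xi\rangle^{s}\lesssim m^{2s/\kappa}$, the hypothesis $\tau\ge 2\beta/\kappa$ makes $\sigma(x,D)\langle D\rangle^{\beta}$ bounded on $L^2$. One then uses $\langle D\rangle^{-\beta}:L^q\to L^2$ with $1/q=1/2+\beta/n$, which is Theorem \ref{theo:Lq-L2}. Your factor $\sigma(x,D)\langle D\rangle^{2\beta}$ would need $\tau\ge 4\beta/\kappa$, and $1/q=1/2+2\beta/n$ would not reproduce the stated $p_0$.

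\textbf{The first hypothesis is misplaced.} The condition $\tau\ge\frac{n(4-\kappa^2)-r_0(4-2\kappa)}{2\kappa^2}$ is not an $L^2$ condition; $\tau\ge0$ already gives $L^2$ boundedness on $S(m^{-\tau},g)$. In the paper it is precisely $-\kappa\tau/2-\kappa n/4+Q_\kappa/2\le0$. This makes the weighted bound $\|(1+t^{\kappa}|\cdot-z|^2)^{N/2}(k_t(\cdot,y)-k_t(\cdot,z))\|_{L^2}\lesssim tR\,t^{\kappa n/4}$ (for $tR<1$) uniform in $t$. Here $k_t$ is the kernel of $\sigma(x,\xi)\varphi(m(z,\xi)/t)$, and $t^{Q_\kappa}$ is the volume of $\{m(z,\xi)\asymp t\}$ (Lemma \ref{lem:phase-space-size}).

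Your proposal never states the corresponding uniformity condition for your pieces $\langle\xi\rangle\sim\lambda=2^j$, and that computation is the core of the far-field estimate. If you carry it out (volume $\lambda^n$, each $\xi$-derivative of $\sigma$ gaining $\lambda^{-\kappa/2}$, weight $\lambda^{\kappa}|x-z|^2$), you get the requirement $\tau\ge n/\kappa-n/2$. This is weaker than the paper's condition since $Q_\kappa\ge n$. It even follows from $\tau\ge 2\beta/\kappa$ and the lower bound on $\beta$, because $Q\ge n$. So your frequency decomposition is viable, but only once this computation is done.

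\textbf{Bookkeeping needed to land on $p_0$.}
\begin{itemize}
\item The near region $B_E(z,2R^\gamma)$ and the annuli $A_j(z,R^\gamma)$ must be Euclidean, with volumes $R^{n\gamma}$ and $(2^jR^\gamma)^n$, while the atom is normalized by $|B(z,R)|\asymp R^Q$.
\item Only $\int b=0$ is available. The Coifman--Weiss atoms defining $H^p_\varrho$ carry no higher moments, so frequencies $\lambda\lesssim 1/R$ gain a single factor $\lambda R$, not $(\lambda R)^{N+1}$.
\item Splitting at $\lambda\sim 1/R$ gives the estimate of Theorem \ref{theo:kernel-estimate-dyadic}: $\int_{A_j(z,R^\gamma)}|k(x,y)-k(x,z)|\,dx\lesssim 2^{-2j/\kappa}R^{1-2\gamma/\kappa}$ for $\gamma\le\kappa/2$.
\item The near part then forces $\gamma\ge Q\frac{2-p(1+2\beta/n)}{n(2-p)}$, and the far part forces $\gamma\le\frac{p(Q+1)-Q}{p(n+2/\kappa)-n}$. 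The exponent $p_0$ is where the two bounds meet.
\end{itemize}
The lower bound on $\beta$ is exactly the compatibility of these two bounds at $p=1$, namely $\frac{Q}{n}\bigl(1-\frac{2\beta}{n}\bigr)\le\frac{\kappa}{2}$. It guarantees $\gamma\le\kappa/2$, not $\gamma_0\le1$. Your remarks on $\beta\ge n/2$ (passing to the limit $\beta=n/2$) and on the case $\kappa=2$, $Q=n$ (plain $L^2$ near estimate, $\gamma=1$, $p>n/(n+1)$) agree with the paper.
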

\begin{figure}[htpb]
    \centering
    \begin{tikzpicture}
        \begin{axis}[
            width=10cm,
            height=7cm,
            xlabel={$\kappa$},
            ylabel={$p_0$},
            xmin=1, xmax=2,
            ymin=0.5, ymax=1.0,
            xtick={1,2},
            ytick={1/2, 3/4, 1},
            yticklabels={$1/2$, $3/4$, 1},
            samples=50,
            grid=both,
            grid style={line width=.1pt, draw=gray!20},
            major grid style={line width=.2pt,draw=gray!50}
        ]
        
        \addplot[blue, thick, domain=1.001:1.999] {1/(0.5 + 3/2*(6-1)*(1/3 + x/4)/(6-1 + 3/2*(6-1)*x/2-3*x/2)
        )
        };
        \addplot[blue, thick, domain=1.001:1.999] {1/(0.5 + 3/2*(6-2)*(1/3 + x/4)/(6-2 + 3/2*(6-2)*x/2-3*x/2)
        )
        };
        \addplot[blue, thick, domain=1.001:1.999] {1/(0.5 + 3/2*(6-3)*(1/3 + x/4)/(6-3 + 3/2*(6-3)*x/2-3*x/2)
        )
        };
        \end{axis}
    \end{tikzpicture}
    \caption{Critical exponents $p_0$ as a function of $\kappa$ for $r_0=1, 2, 3,$ with $r=2,$ and $n=3$. Notice each curve approaches the corresponding value of $n/(n+1)$ as $\kappa\to2,$ when $r_0=n$ }
    \label{fig:critical-p}
\end{figure}
\subsubsection{Boundedness for Sobolev and Besov spaces}
Moreover, we include the following boundedness results for Sobolev and Besov spaces. 

\begin{theorem}\label{Lorentz:Sobolev:Besov:intro}
Let $\sigma\in S(m^{-\tau},g)$. Let $s,s'\in\mathbb{R},$ and let $1<p<\infty.$ Assume that 
      \begin{equation*}
            \tau \ge s'-s+ (n-r_0)\left|\frac{1}{2}-\frac{1}{p}\right|.
        \end{equation*}
        Then, we have that $\sigma(x, D)$ extends to a bounded operator from $H^s_p(m, g)$ to $H^{s'}_p(m, g).$ Moreover, for $1<p<\infty,$ we have that $\sigma(x, D)$ extends to a bounded operator from $B^s_{p,q}(m,g)$ to $B^{s'}_{p,q}(m,g).$
\end{theorem}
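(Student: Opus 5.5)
The approach is to reduce everything to the $L^p$-boundedness of Theorem \ref{Th:Delgado:2005}(c), using the order-reducing structure of the Weyl--H\"ormander calculus. I take the Sobolev spaces to be defined, as is standard in this calculus, by
\[
H^s_p(m,g)=\{f:\ \Lambda_s f\in L^p\},\qquad \|f\|_{H^s_p(m,g)}=\|\Lambda_s f\|_{L^p},
\]
where $\Lambda_s=\mathrm{Op}(\lambda_s)$ is an invertible operator with symbol $\lambda_s\in S(m^{s},g)$ and inverse $\Lambda_s^{-1}=\Lambda_{-s}'$ with symbol in $S(m^{-s},g)$. Such a $\Lambda_s$ exists because $g$ is a H\"ormander metric and $m$ is $g$-admissible (Bony--Chemin). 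The operator to estimate is then the conjugated operator
\[
T:=\Lambda_{s'}\,\sigma(x,D)\,\Lambda_s^{-1}.
\]
By the symbolic calculus (composition $S(m_1,g)\circ S(m_2,g)\subset S(m_1m_2,g)$), $T$ has symbol in $S(m^{s'-\tau-s},g)=S(m^{-\beta},g)$ with $\beta:=\tau-(s'-s)$. By hypothesis,
\[
\beta\ \ge\ (n-r_0)\left|\tfrac12-\tfrac1p\right|=2n\varepsilon_0\left|\tfrac12-\tfrac1p\right|,
\]
since $2n\varepsilon_0=Q_0-n=n-r_0$.

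Next I apply the $L^p$ results. If $\beta<n\varepsilon_0$, Theorem \ref{Th:Delgado:2005}(c) gives $T$ bounded on $L^p$. If $\beta\ge n\varepsilon_0$, I write $T=T_0\circ\Lambda'$, where $T_0$ has symbol in $S(m^{-n\varepsilon_0},g)$ and $\Lambda'$ has symbol in $S(m^{-(\beta-n\varepsilon_0)},g)\subset S(1,g)$. Theorem \ref{Th:Delgado:2005}(b) gives $T_0$ bounded on $L^p$, and Beals' theorem gives $\Lambda'$ bounded on $L^p$ for $1<p<\infty$. Alternatively, $\beta\ge n\varepsilon_0$ already lies in the range of (c) when $|1/2-1/p|\le1/2$, provided the inclusion $S(m^{-\beta},g)\subset S(m^{-\beta'},g)$ for $\beta\ge\beta'$ is used, which holds because $m\ge1$. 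Either way $\|Tf\|_{L^p}\lesssim\|f\|_{L^p}$. Hence
\[
\|\sigma(x,D)f\|_{H^{s'}_p}=\|T\Lambda_s f\|_{L^p}\lesssim\|\Lambda_s f\|_{L^p}=\|f\|_{H^s_p}.
\]

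For the Besov statement, I view $B^s_{p,q}(m,g)$ as a real interpolation space $(H^{s_0}_p,H^{s_1}_p)_{\theta,q}$ with $s=(1-\theta)s_0+\theta s_1$. Choose $s_0<s<s_1$ and set $s_j'=s_j+(s'-s)$, so that $s_j'-s_j=s'-s$ and the order condition holds at both endpoints with the same $\tau$. The Sobolev result then gives $\sigma(x,D):H^{s_j}_p\to H^{s_j'}_p$ for $j=0,1$, and real interpolation yields $B^s_{p,q}\to B^{s'}_{p,q}$. If the paper instead defines Besov spaces through a Littlewood--Paley decomposition adapted to $m$, the same conclusion should follow by applying the conjugation argument to each dyadic piece, or by identifying that definition with the interpolation space.

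The main obstacle is the first step: constructing the invertible $\Lambda_s$ with symbolic inverse, and matching it with the paper's definition of $H^s_p(m,g)$. I would obtain the parametrix from the Weyl--H\"ormander calculus, using $h=m^{-2}\langle\xi\rangle\cdot\ldots$ and the uncertainty parameter $h\le1$. The smoothing error, which is in $S(h^\infty m^{s},g)$, is absorbed by adding a large constant in the $S(m^s,g)$ symbol to make it invertible.
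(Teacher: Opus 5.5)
Your argument is the paper's argument. You conjugate by order-reducing operators so that $\Lambda_{s'}\sigma(x,D)\Lambda_s^{-1}$ has symbol in $S(m^{s'-s-\tau},g)$, apply Delgado's $L^p$ theorem, and obtain the Besov statement by real interpolation. Your choice $s_j'=s_j+(s'-s)$, which keeps the same $\theta$ on both sides, is the right way to set this up. The paper takes $\Lambda_s=(\mathcal{R}+cI)^{s/2}$ with $\mathcal{R}=\mathrm{Op}^w(m^2)$, since that is how $H^s_p(m,g)$ is defined there. It quotes Remark \ref{Membership:s:m:g:powers} for the symbol classes of $(\mathcal{R}+cI)^{\pm s/2}$. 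It then applies Theorem \ref{Delgado:Theorem:2016}, which covers every order $\ge (n-r_0)|1/2-1/p|$ at once, so no case split is needed.

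Two of your auxiliary claims are wrong and should be dropped.

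(1) In the case $\beta\ge n\varepsilon_0$, the factorization $T=T_0\circ\Lambda'$ does not work, because Beals' theorem does not make $\Lambda'$ bounded on $L^p$ for all $1<p<\infty$. When $r_0<n$, on the region $a(x,\xi)\lesssim\langle\xi\rangle$ one has $m^2\asymp\langle\xi\rangle$, so $g$ is of $S_{1/2,1/2}$ type there. Consequently operators with symbols in $S(m^{-\epsilon},g)$ with $\epsilon<(n-r_0)|1/2-1/p|$, in particular in $S(1,g)$, need not be bounded on $L^p$ for $p\neq 2$. This is exactly where the loss $(n-r_0)|1/2-1/p|$ comes from. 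Your fallback is the correct argument. In its cleanest form: $m\ge 1$ gives $S(m^{-\beta},g)\subset S(m^{-n\varepsilon_0},g)$, and Theorem \ref{Th:Delgado:2005}(b) with $\varepsilon=\varepsilon_0$ applies. Your variant through part (c) fails when $r_0=n$, because the range $0\le\beta<n\varepsilon_0$ in (c) is then empty.

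(2) For the same reason, you should not let $\Lambda_s$ be an arbitrary invertible operator in $S(m^s,g)$. Two such choices differ by an operator with symbol in $S(1,g)$, so for $p\neq 2$ there is no reason for the resulting norms to be equivalent. To prove the statement as posed you must use $(\mathcal{R}+cI)^{s/2}$. With that choice no parametrix is needed. The parametrix you sketch would also fail as stated: the Planck function is $h=\lambda_g^{-1}=\langle\xi\rangle/m^2$, which is $\asymp 1$ wherever $a(x,\xi)\lesssim\langle\xi\rangle$. So remainders in $S(h^\infty m^s,g)$ are not smoothing there and cannot be absorbed by adding a constant.
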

For regularity properties and the theory of pseudo-differential operators on Lie 
groups, particularly in the non-commutative case, we refer the reader to
\cite{Cardona:2026,cardona2026weak,CardonaDelgadoRuzhansky:Lp:Pseudo,
Cardona:Martinez:2025a,Coriasco:Toft2016,cardona-ruzhansky-subelliptic,
delgado-ruzhansky,ruzhansky-turunen}. For the aspects related to the Weyl-H\"ormander calculus we will follow  H\"ormander \cite{Hormander1985III}.

\subsection{Structure of the manuscript} This paper is organized as follows:

$\bullet$ In Section \ref{section:prelims} we provide the preliminaries about sub-Riemannian geometry used in this paper, the Weyl quantization and the Weyl-H\"ormander calculus. 

We also discuss the Uniform H\"ormander Condition $(UHC)$ identified here as a crucial hypothesis in the analysis of Littlewood-Paley components. We end with the definition of Hardy spaces adapted to a subelliptic distance.

$\bullet$ The proofs of our main theorems are presented in Section \ref{section:main}. Indeed, we start with Subsection \ref{Geometric:properties} by presenting the geometric properties induced to the volume between level surfaces $m(z,\xi)\asymp t,$ defined by the weigh function $m:=m(\cdot,\cdot).$ 

We also investigate the effect of the $UHC$-property into the subelliptic distance that was chosen in the definition of Hardy spaces. Then the regularity results for the $S(m,g)$-classes are investigated, starting with the boundedness on Sobolev and Besov spaces, on Lorentz spaces, including the weak (1,1) type of the corresponding operators.

\section{Preliminaries}
\label{section:prelims}
In this section we provide the notions related to the Weyl-H\"ormander classes $S(m, g),$ for this aspect we follow H\"ormander \cite{Hormander1985III}. We also present other notions required in our analysis. We start in the next section introducing the Uniform H\"ormander Condition, used as a hypothesis of our boundedness results. When there is $C>0$ such that $A\le CB,$ then we write $A\lesssim B.$ If, in addition $B\lesssim A,$ we write $A\asymp B.$ Moreover, we denote by $\lfloor x\rfloor$ the least integer $N,$ such that $N\ge x.$
\subsection{The Uniform H\"ormander Condition}\label{UHC:Subsec}

Let $A(x):=(a_{ij}(x))$ be a semi-positive definite matrix of rank $r(x)\ge r_0$ so that its entries and their derivatives are smooth and uniformly bounded, and let 
\begin{equation*}
    a(x, \xi) := \sum_{ij}a_{ij}(x)\xi_i\xi_j.
\end{equation*}
Let us define
\begin{equation}\label{metricgxx}
    g_X(\diff x, \diff \xi) := m^{-2}(X)\left(\langle\xi\rangle^2 \diff x^2 +\diff \xi^2\right), \quad m(X) := \sqrt{a(x, \xi)+\langle\xi\rangle}.
\end{equation}
Since $A(x)$ is semi-definite positive, it can be factorized as 
\begin{equation*}
    A(x) = B^*(x)B(x),
\end{equation*}
or equivalently, it can be written as 
\begin{equation*}
    A(x) = \sum_{j=1}^k X_j^*(x)X_j(x),
\end{equation*}
where each $X_j(x)$ is a smooth bounded vector field. We say that this collection of vector fields satisfy the \textit{H\"ormander Condition} at step $r\in \mathbb{N},$ if these vector fields together with their commutators up to length $r,$ span $\mathbb{R}^n$ at each point $x.$ Let us define 
\begin{align*}
    X^{(1)}&:= \{ X_1, \ldots, X_k\}, \\
    X^{(2)}&:= \{ [X_i, X_j] : 1\le i<j\le k\}, ...,
\end{align*}
so that the elements of $X^{(s)}$ are the commutators of length $s$. If $Y_j\in X^{(s)}$, then we say that it has degree $d_j=s.$ We say that these vectors satisfy the {\it Uniform H\"ormander Condition (UHC)}, if for any spanning subset of $n$ vector fields we have that 
\begin{equation*}
    |\det (Y_{j_1}, \ldots , Y_{j_n})| \ge c_0>0,
\end{equation*}
for every $x\in\mathbb{R}^n.$ Now, we define two notions of distance that take into account the geometric particularities of this problem. 
\begin{definition}\label{def:varrho}
    We say $\varphi\in C(\delta)$ if it is an absolutely continuous mapping $\varphi:[0, 1]\to \mathbb{R}^n$ such that 
    \begin{equation*}
        \varphi'(t) = \sum_j a_j(t)Y_j(\varphi(t)), \quad |a_j(t)|<\delta^{d_j}.
    \end{equation*}
    Moreover, we define
    \begin{equation*}
        \varrho(x, y):= \inf\{\delta>0:\exists \varphi\in C(\delta) , \, \varphi(0)=x,\, \varphi(1)=y\}.
    \end{equation*}
\end{definition}
\begin{definition}
    We say that $y\in B(x, R)$ if there exists a Lipschitzian path $\gamma(t)$, $0\le t\le R,$ such that $\gamma(0)=x,$ and $\gamma(R)=y$. Moreover, we require that it is subunitary with respect to $A(x)$, namely, that the tangent vectors
    \begin{equation*}
        \gamma'(t) =: \lambda(t),
    \end{equation*}
    satisfy for almost every $t\in [0,R] ,$ the inequality
    \begin{equation*}
        \sum_{ij}\lambda_i\lambda_j\xi_i\xi_j \le \sum_{ij} a_{ij}(\gamma(t)) \xi_i\xi_j.
    \end{equation*}
    We define the distance $d(\cdot,\cdot)$ below as 
    \begin{equation*}
        d(x, y) := \inf \{R>0: y\in B(x, R) \}.
    \end{equation*}
\end{definition}
These two distances are equivalent, see Nagel, Stein and Wainger \cite[Theorem~4]{nagel:stein:wainger}.
\begin{theorem}
    If the UHC is satisfied, then the distances $\varrho(\cdot, \cdot)$ and $d(\cdot, \cdot)$ are equivalent.
\end{theorem}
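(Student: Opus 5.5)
The plan is to prove the two inequalities $\varrho\lesssim d$ and $d\lesssim\varrho$ separately, each through a local comparison of balls at small scales combined with a chaining argument at large scales. The UHC supplies the uniformity of all constants in $x$.

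\textbf{Step 1: $\varrho\lesssim d$.} Take $y\in B(x,R)$ with a subunitary path $\gamma$ on $[0,R]$. Since $A(x)=\sum_j X_j^*X_j$, the subunit condition means that $\gamma'(t)$ lies in the ellipse determined by $A(\gamma(t))$. Writing $A=B^*B$, this gives $\gamma'(t)=\sum_j c_j(t)X_j(\gamma(t))$ with $\sum_j|c_j|^2\le 1$. Now rescale $\varphi(s)=\gamma(Rs)$, $s\in[0,1]$. Then $\varphi'(s)=\sum_j Rc_j(Rs)X_j$ with $|Rc_j|\le R=R^{d_j}$, since $d_j=1$ for the generators. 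Hence $\varphi\in C(R')$ for every $R'>R$, and $\varrho(x,y)\le d(x,y)$ follows directly, with no constant.

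\textbf{Step 2: $d\lesssim\varrho$ locally.} Here commutator directions have to be realised by horizontal paths. First I would restrict to $\varrho(x,y)=\delta\le\delta_0$. For the general case I would use that $\varrho$ and $d$ are both length-type (path) metrics: concatenating paths gives the triangle inequality, and splitting a $C(\delta)$ path into $N\approx \delta/\delta_0$ pieces reduces to $\delta\le\delta_0$. At large scales both distances are comparable to the Euclidean one, because the $Y_j$ are bounded and the UHC gives a uniform lower bound on the spanned volume. So the large-scale comparison is easy, and the core is the local statement.

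Locally, I would invoke the Nagel--Stein--Wainger machinery. The UHC says that for some $n$-tuple $I=(j_1,\dots,j_n)$, $|\det(Y_{j_1},\dots,Y_{j_n})(x)|\ge c_0$ uniformly. With that tuple I would define the exponential-type map $\Phi_{x,\delta}(u)=\exp\bigl(\sum_k u_k\delta^{d_{j_k}}Y_{j_k}\bigr)(x)$ for $|u|<1$. Because the $C^N$ norms of the $Y_j$ are bounded and the determinant is bounded below, the inverse function theorem applies on a ball of $u$ of radius independent of $x$ and $\delta$. Consequently the $\varrho$-ball $B_\varrho(x,c\delta)$ is contained in $\Phi_{x,\delta}(\{|u|<1\})$. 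It then suffices to show that each point $\Phi_{x,\delta}(u)$ is reachable by a subunit path of length $\lesssim\delta$. For a commutator $Y=[X_{i_1},[\dots,X_{i_s}]]$ of degree $s$, the flow $e^{tY}$ with $|t|\le\delta^s$ is approximated by a product of flows $e^{\pm \tau X_{i}}$ with $\tau\approx\delta$. This is the Campbell--Hausdorff commutator formula, with error $O(\delta^{s+1})$ in the appropriate scaled sense, and such a product is a horizontal path of length $\lesssim\delta$. To absorb the error terms, I would compose these approximate flows and apply a fixed-point/open-mapping argument in the $u$-coordinates, which again uses the uniform nondegeneracy from the UHC. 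This yields $d(x,y)\lesssim\delta$.

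\textbf{Main obstacle.} The hard step is the local reverse inclusion $B_\varrho(x,c\delta)\subset B(x,C\delta)$ with constants uniform in $x\in\mathbb{R}^n$. Nagel--Stein--Wainger handle this on compact sets. The first thing I would try is to check that every constant in their proof depends only on finitely many $C^N$ norms of the $X_j$, which are bounded here, and on the lower bound of the relevant determinant, which is exactly what the UHC guarantees is uniform. I would then cite \cite[Theorem~4]{nagel:stein:wainger} in this uniform form rather than redoing the commutator approximation in full.
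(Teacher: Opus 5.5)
\textbf{The gap is the passage from small to large scales. It cannot be repaired, because the global statement is false.}

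Your count of pieces is off. Splitting a $C(\delta)$ path into $N$ pieces and rescaling each to $[0,1]$ divides every coefficient by $N$, so a degree-$d_j$ coefficient becomes $<\delta^{d_j}/N$. With $N\approx\delta/\delta_0$ this is $\delta^{d_j-1}\delta_0$, which exceeds $\delta_0^{d_j}$ as soon as $d_j\ge 2$ and $\delta>\delta_0$. To land in $C(\delta_0)$ you need $N\approx(\delta/\delta_0)^r$ pieces, and chaining then only gives $d\lesssim\varrho^r$.

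Likewise, boundedness of the $Y_j$ only gives $|x-y|\lesssim\varrho(x,y)^r$ for $\varrho\ge 1$. A $C(\delta)$ path may move with speed $\delta^{d_j}$ along a degree-$d_j$ field, so $\varrho$ is not comparable to the Euclidean distance at large scales.

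Concretely, in $\mathbb{R}^3$ let $X_1=\partial_{x_1}$ and $X_2=\cos x_1\,\partial_{x_2}+\sin x_1\,\partial_{x_3}$. Then $[X_1,X_2]=-\sin x_1\,\partial_{x_2}+\cos x_1\,\partial_{x_3}$ and $\det(X_1,X_2,[X_1,X_2])\equiv 1$, so the UHC holds with $r=2$. The matrix $A=X_1X_1^{T}+X_2X_2^{T}$ has rank $2$ and coefficients bounded with all derivatives.
\begin{itemize}
\item Since $A$ is the orthogonal projection onto $\mathrm{span}\{X_1,X_2\}$, the subunit condition forces Euclidean speed at most $1$. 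Hence $d(0,p_L)\ge L$ for $p_L=(0,0,L)$.
\item The path $\varphi(t)=(0,0,Lt)$ satisfies $\varphi'(t)=L\,[X_1,X_2](\varphi(t))$. So $\varphi\in C(\delta)$ for every $\delta>\sqrt{L}$, and $\varrho(0,p_L)\le\sqrt{L}$.
\end{itemize}
Thus $d/\varrho$ is unbounded, and no chaining argument can give $d\lesssim\varrho$ globally.

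\textbf{What your argument does establish is correct and matches what the paper relies on.} Step 1 ($\varrho\le d$ with constant $1$, via $\gamma'=\sum_j c_jX_j$ with $\sum_j c_j^2\le 1$ and rescaling) is fine and global.

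For the local part, the paper gives no argument beyond citing Nagel--Stein--Wainger, Theorem~4, which is itself a statement on compact sets. Your plan to check that their constants depend only on finitely many $C^N$ norms of the $X_j$ and on the UHC lower bound is what is needed to make that citation uniform in $x$.

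One detail in your sketch: a single fixed nondegenerate $n$-tuple does not in general give $B_\varrho(x,c\delta)\subset\Phi_{x,\delta}(\{|u|<1\})$. This fails, for example, if a tuple of lower total degree is also nondegenerate at some points. Nagel--Stein--Wainger instead choose the tuple depending on $(x,\delta)$, so that $|\det Y_I(x)|\,\delta^{d(I)}$ is comparable to its maximum over $I$.

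The correct conclusion is:
\begin{itemize}
\item $\varrho\asymp d$ whenever $\varrho(x,y)\le 1$, uniformly in $x$. A bounded number of pieces, $N\approx\delta_0^{-r}$, handles $\delta_0\le\varrho\le 1$.
\item For larger distances only $\varrho\le d\lesssim\varrho^{r}$ holds, with $d(x,y)\asymp|x-y|$ for $|x-y|\ge 1$.
\end{itemize}
You should state and prove this local version rather than attempt the large-scale reduction.
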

\begin{remark}\label{Q:definition}
    For the remainder of this paper, we are going to denote the sub-unitary balls as $B(z, R)$ and the Euclidean balls as $B_E(z, R).$ Moreover, we have that 
    \begin{equation*}
        B(z, R) \asymp R^Q, \quad Q:= d_1+\cdots+d_n.
    \end{equation*}
    Notice that when $r=2,$ then we have that 
    \begin{equation*}
        Q=r_0+2(n-r_0) = Q_0.
    \end{equation*}
    Moreover, it was proved in \cite{nagel:stein:wainger} that this disance satisfies the doubling condition
\begin{equation*}
    |B(z, 2R)| \le C|B(z, R)|.
\end{equation*}
\end{remark}
\subsection{The Weyl quantization}
In what follows we introduce the notion of pseudo-differential operator. We record the quantization process due to Kohn and Nirenberg \cite{KN:1965}. Also, we present the Weyl quantization. A motivation for this is that real-valued symbols are associated to symmetric operators.
\begin{definition}
    Let $\sigma(x, \xi) \in \mathcal{S}'(\mathbb{R}^{2n}).$ We define its \emph{Kohn-Nirenberg quantization} as the operator $\sigma(x, D):\mathcal{S}(\mathbb{R}^{n})\to\mathcal{S}(\mathbb{R}^{n})$ defined by
    \begin{equation*}
        \sigma(x, D)f(x) := \int_{\mathbb{R}^n}\int_{\mathbb{R}^n}e^{i(x-y)\cdot\xi}\sigma(x, \xi)f(y)\diff \xi \diff y.
    \end{equation*}
    Moreover its \emph{Weyl quantization} is defined as
    \begin{equation*}
        \sigma^wf(x) := \int_{\mathbb{R}^n}\int_{\mathbb{R}^n}e^{i(x-y)\cdot\xi}\sigma\left(\tfrac{x+y}{2}, \xi\right)f(y)\diff \xi \diff y.
    \end{equation*}
\end{definition}
According to the Weyl-H\"ormander classes, derivatives of a symbol are controlled by a (H\"ormander) metric $g$ and the weight $m$ provides some notion of order for the symbol. The family of symbols $S(m,g)$ is introduced below. The notion of H\"ormander metric can be introduced as follows.
\begin{definition}
    For $(x, \xi)\in \mathbb{R}^{2n} $, let $g_{(x, \xi)}(\cdot)$ be a positive definite quadratic form on $\mathbb{R}^{2n}$. We say $g$ is a H\"ormander metric if the following conditions are satisfied
    \begin{enumerate}
        \item \emph{Continuity.} There exists constants $C, c, c'>0$ such that $g_{(x,\xi)}(y, \eta) \le C $ implies that 
        \begin{equation*}
            c'g_{(x+y, \xi+\eta)}(t, \tau) \le g_{(x,\xi)}(t,\tau) \le c g_{(x+y, \xi+\eta)}(t, \tau)
        \end{equation*}
        for every $(t, \tau)\in \mathbb{R}^{2n}.$
        \item \emph{Uncertainty principle.} Let us define 
        \begin{equation*}
            g^\sigma_{(x, \xi)}(t, \tau) := \sup_{(z, \zeta)\ne0} \frac{(z\cdot \tau -t\cdot\zeta)^2}{g_{(x, \xi)}(z, \zeta)},
        \end{equation*}
        where $z\cdot\tau-t\cdot\zeta$ denotes the sympletic form in $\mathbb{R}^n.$
        We say $g$ satisfies the uncertainty principle if 
        \begin{equation*}
            \lambda_g(x, \xi) := \inf_{(t,\tau)\ne0} \left(\frac{g^\sigma_{(x,\xi)}(t, \tau)}{g_{(x,\xi)}(t, \tau)}
            \right)^{1/2} \ge 1,
        \end{equation*}
        for all $(x, \xi)\in \mathbb{R}^{2n} $
        \item \emph{Temperancy.} We say that $g$ is temperate if the exists $\overline{C}>0$ and $J\in \mathbb{N}$ such that 
        \begin{equation*}
            \left(\frac{g_{(x, \xi)}(\cdot)}{g_{(y, \eta)}(\cdot)}
            \right)^{\pm1} \le \overline{C} \left(1+g^\sigma_{(y,\eta)}(x-y, \xi-\eta)\right)^J.
        \end{equation*}
    \end{enumerate}
\end{definition}
Weights $M$ for the $S(M, g)$ classes are typically considered admissible with respect to the metric $g,$ as we record in the following definition.
\begin{definition}
    We say that a strictly positive function $M$ is a $g$-admissible weight if it satisfies the following
    \begin{enumerate}
        \item \emph{Continuity.} There exists $\tilde{C}>0$ such that $g_{(x, \xi)}(y,\eta)\le 1/\tilde{C}$ implies that
        \begin{equation*}
            \left(\frac{M(x+y, \xi+\eta)}{M(x, \xi)}\right)^{\pm1} \le \tilde{C}.
        \end{equation*}
        \item \emph{Temperancy.} There exists $\tilde{C}>0$ and $N\in\mathbb{N}$ such that 
        \begin{equation*}
            \left(\frac{M(y, \eta)}{M(x, \xi)}
            \right)^{\pm1} \le \tilde{C}\left(1+g^\sigma_{(x,\xi)}(x-y, \xi-\eta)\right)^N.
        \end{equation*}
    \end{enumerate}
\end{definition}
\subsection{The Weyl-H\"ormander calculus}
The calculus of pseudo-differential operator for the $S(M, g)$ classes extend several pseudo-differential calculus, including the classical $(\rho,\delta)$-H\"ormander classes, see \cite{Hormander1967}. For more details on this calculus, we refer the reader to \cite{Hormander1985III}.
\begin{definition}
    For a H\"ormander's metric $g$, and a $g$-admissible weight $M,$ we denote by $S(M, g)$ the set of all smooth functions $\sigma$ on $\mathbb{R}^{2n}$ such that for any integer $k,$ there exists $C_k>0$ so that for any $(x, \xi), (t_1, \tau_1),\ldots,(t_k,\tau_k),$ we have that the directional derivatives satisfy
    \begin{equation*}
        \left|\sigma^{(k)}(x, \xi)((t_1, \tau_1),\ldots,(t_k,\tau_k))
        \right| \le C_k M(x, \xi)\prod_{j=1}^k g_{(x, \xi)}^{1/2}(t_j, \tau_j).
    \end{equation*}
    We denote by $\|\sigma\|_{k;S(M,g)}$ the minimum $C_k$ that satisfies these estimates. When $M,g$ can be understood from the context, we will use $\|\sigma\|_{(k)}$.
\end{definition}
\begin{definition}
    For a H\"ormander's metric $g$, and a $g$-admissible weight $M,$ we define the Sobolev space relative to $M$, denoted by $H(M, g)$, as the set of tempered distributions $u$ on $\mathbb{R}^{n}$ such that 
    \begin{equation*}
        \sigma^wu \in L^2
    \end{equation*}
    for all $\sigma\in S(M, g).$
\end{definition}
\begin{remark}
    For the properties of the Sobolev spaces $H( M, g)$ we refer to Bony and Chemin \cite{BC:1994}. We will consider a general class of Sobolev and Besov spaces in subsection \ref{ss:Sobolev-Besov}.
\end{remark}
\begin{remark} (1) For $0\le\delta\le\rho\le 1$, with $\delta<1$, we have that the metric given by 
    \begin{equation*}
        g^{\rho,\delta}_{(x,\xi)} := \langle\xi\rangle^{2\delta}\diff x^2 + \langle\xi\rangle^{-2\rho}\diff \xi^2,
    \end{equation*}
    is a H\"ormander metric. In particular, the function given by
    \begin{equation*}
        \Lambda^m(x, \xi):= \langle\xi\rangle^m,
    \end{equation*}
    is a $g^{\rho,\delta}$-admissible weight for every $m\in\mathbb{R}$. Moreover, we have that
    \begin{equation*}
        S^m_{\rho,\delta} = S(\Lambda^m, g^{\rho,\delta}).
    \end{equation*}
    Here, we say that $\sigma \in S^m_{\rho,\delta}$ if the following inequalities
    \begin{equation}\label{eq:rho-delta-symbol}
        \left|\partial^\mu_\xi\partial^\nu_x \sigma(x, \xi)
        \right| \le C_{\mu\nu} \langle\xi\rangle^{m-\rho|\mu|+\delta|\nu|},
    \end{equation}
    hold, for every $\mu,\nu\in\mathbb{N}^n_0.$\\%

    (2) The metric defined in \eqref{metricgxx} is a H\"ormander metric and $m$ is a $g$-admissible weight (cf. \cite{c-c-x:se}). 
\end{remark}
\begin{remark} 
    Let $(M, g)$ be a Riemannian manifold of bounded geometry and let $\nabla$ be a linear connection with Christoffel symbols given by $\Gamma_{kj}^j(x).$ For $0\le\delta<\rho\le1,$ let us define the metric 
    \begin{equation*}
        {g}_{x, \xi}^{\rho,\delta,\nabla} := \langle\xi\rangle_x^{2\delta} |\diff x|^2 + \langle\xi\rangle^{-2\rho}\sum_j\left| \diff \xi_j -\sum_{i,k}\Gamma_{kj}^i(x)\xi_i\diff x^k
        \right|^2,
    \end{equation*}
    and the weight 
    \begin{equation*}
        \Lambda^m_g(x, \xi) := \langle\xi\rangle_x^m,
    \end{equation*}
    where 
    \begin{equation*}
        \langle\xi\rangle_x:= \sqrt{1+g^{ab}(x)\xi_a\xi_b}.
    \end{equation*}
    Then, the Safarov symbol class may be identified as
    \begin{equation*}
        S^m_{\rho,\delta}(\nabla) = S(\Lambda^m_g, g^{\rho,\delta,\nabla}).
    \end{equation*}
    We direct the reader to \cite{GomezCobosRuzhansky2025, CardonaGomezCobosRuzhanskyMartinezRuzhanskyYeghoyan} for more details.
\end{remark}
\begin{definition}
    A $g$-admissible weight $M$ is regular if $M\in S(M, g). $
\end{definition}
\begin{remark} For the H\"ormander metric defined in \eqref{metricgxx}, the weight $m$ is regular as it was shown in \cite{c-c-x:se}.
 
\end{remark}
 
The following Fefferman-type boundedness theorem can be deduced from a general version stated by the second author in \cite{Delgado2016}, see also \cite{Delgado2006}.
\begin{theorem}\label{Delgado:Theorem:2016}
    Let $A(x)=a_{ij}(x)$ be a semi-definite positive matrix with smooth and uniformly bounded coefficients such that $r(x):=\mathrm{rank}(A(x)) \ge r_0\ge1$, and it satisfies the UHC at step $r$. Let 
    \begin{equation*}
        m(x, \xi) = \sqrt{  A(x)\xi\cdot\xi + \langle\xi\rangle }, \quad g_{(x,\xi)}= m^{-2}(x,\xi)\left( \langle\xi\rangle^2\diff x^2+\diff \xi^2
        \right).
    \end{equation*}
    Let $\sigma\in S(m^{-\tau},g)$ and $1<p<\infty$. Then, $\sigma(X, D)$ is bounded from $L^p(\mathbb{R}^n)$ to itself if 
    \begin{equation*}
        \tau \ge (n-r_0)\left| \frac{1}{2}-\frac{1}{p} \right|.
    \end{equation*}
\end{theorem}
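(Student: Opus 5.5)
The statement is said to be deducible from the general Fefferman-type theorem of \cite{Delgado2016}, so the plan is to reduce to it and interpolate. The argument has three parts: an $L^2$ estimate, an endpoint estimate, and complex interpolation between them.

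\textbf{Step 1: the $L^2$ case ($\tau\ge 0$).} The metric $g$ in \eqref{metricgxx} is a H\"ormander metric and $m$ is a regular $g$-admissible weight. Hence $S(m^{-\tau},g)\subset S(1,g)$ whenever $\tau\ge 0$, because $m\gtrsim 1$. Next we pass from the Kohn--Nirenberg to the Weyl quantization. This is legitimate because $g$ is symplectically small: indeed $g^\sigma = m^2\langle\xi\rangle^{-2}\diff x^2+m^2\diff\xi^2 \ge g$, so $\lambda_g\ge1$, and in this situation $S(M,g)$ is invariant under change of quantization. H\"ormander's $L^2$ theorem for $S(1,g)$ then gives $L^2$-boundedness.

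\textbf{Step 2: the endpoint.} The endpoint is the $L^\infty\to\mathrm{BMO}$ and $H^1\to L^1$ estimate of Theorem \ref{Th:Delgado:2005}(a). There $\beta=n\varepsilon_0=\tfrac{Q_0-n}{2}=\tfrac{n-r_0}{2}$. This is exactly the value of $(n-r_0)|\tfrac12-\tfrac1p|$ at $p=1$ and at $p=\infty$. Note also that part (b) of that theorem already covers $\beta=n\varepsilon$ with $\varepsilon\ge\varepsilon_0$ for all $1<p<\infty$. So the claimed range for large $\tau$ is immediate once we have the embedding $S(m^{-\tau},g)\subset S(m^{-\tau'},g)$ for $\tau\ge\tau'$. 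For intermediate $\tau$ we interpolate.

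\textbf{Step 3: interpolation.} Take $2<p<\infty$ and $\tau=(n-r_0)(\tfrac12-\tfrac1p)$; the case $p<2$ follows by duality. The adjoint of $\sigma(x,D)$ has a symbol in the same class, by the calculus. Embed $\sigma$ in the analytic family
$$\sigma_z(x,\xi)=e^{z^2}\, m(x,\xi)^{\tau-z\frac{n-r_0}{2}}\,\sigma(x,\xi),\qquad 0\le \mathrm{Re}\, z\le1 .$$
Since $m$ is regular, $m^{i s}\in S(1,g)$ with seminorms growing polynomially in $|s|$. Consequently:
\begin{itemize}
\item on the line $\mathrm{Re}\,z=0$ we get $\sigma_z\in S(m^{0},g)$, which is $L^2$-bounded by Step 1;
\item on the line $\mathrm{Re}\,z=1$ we get $\sigma_z\in S(m^{-(n-r_0)/2},g)$, which is $L^\infty\to\mathrm{BMO}$ bounded by Step 2;
\item in both cases the bounds grow at most polynomially in $|\mathrm{Im}\, z|$, and the factor $e^{z^2}$ absorbs this growth.
\end{itemize}
The Fefferman--Stein interpolation theorem between $L^2$ and $\mathrm{BMO}$ then gives $L^p$-boundedness at $\theta=1-\tfrac2p$, i.e.\ at order $\theta\tfrac{n-r_0}{2}=\tau$. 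The $\mathrm{BMO}$ here is the one adapted to $\varrho$; under the UHC it is a space of homogeneous type by Remark \ref{Q:definition}, so the interpolation applies. Larger $\tau$ follow by inclusion of the classes.

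\textbf{Main obstacle.} The hardest point is justifying uniform seminorm control of $m^{is}$ in $S(1,g)$. I would handle it by writing $m^{is}=\exp(is\log m)$. One checks that $\log m$ has all $g$-derivatives of order $\ge1$ bounded, because $m\in S(m,g)$. The Fa\`a di Bruno formula then gives $k$-th seminorms $O(\langle s\rangle^k)$.
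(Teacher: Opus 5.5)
Your argument is correct. The paper gives no proof of its own; it deduces the statement from the second author's Fefferman-type theorem, i.e.\ Theorem~\ref{Th:Delgado:2005}. Part~(c) of that theorem is already exactly the range $\tau<(n-r_0)/2$, because $2n\varepsilon_0=Q_0-n=n-r_0$. Part~(b), together with the inclusion $S(m^{-\tau},g)\subset S(m^{-\tau'},g)$ for $\tau\ge\tau'$, covers $\tau\ge(n-r_0)/2$.

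Your Steps~1 and~3 therefore re-derive (c) from the endpoint (a) by Fefferman's $L^2$--$\mathrm{BMO}$ complex interpolation, which is the standard way such results are proved. What this buys is that only the endpoint is used as a black box. The price is two facts you use tacitly:
\begin{itemize}
\item the $L^\infty\to\mathrm{BMO}$ norm in (a) must be controlled by finitely many seminorms of the symbol, which is what makes the growth in $\mathrm{Im}\,z$ admissible; this is standard via the closed graph theorem on the Fr\'echet space $S(m^{-(n-r_0)/2},g)$;
\item the Fefferman--Stein theorem must hold on the space of homogeneous type $(\mathbb{R}^n,\varrho,\diff x)$.
\end{itemize}
Both are fine. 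For $p<2$ you could also interpolate directly against the $H^1\to L^1$ endpoint in (a) instead of passing to adjoints.

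There is one slip in Step~1. For $g=m^{-2}\langle\xi\rangle^2\diff x^2+m^{-2}\diff\xi^2$ the correct dual metric is $g^\sigma=m^2\diff x^2+m^2\langle\xi\rangle^{-2}\diff\xi^2$; you swapped the coefficients. With your formula, the claimed inequality $g\le g^\sigma$ would actually fail in the $x$-directions, e.g.\ whenever $0\neq\xi\in\ker A(x)$, where $m^2=\langle\xi\rangle$. With the correct formula, $\lambda_g=m^2/\langle\xi\rangle\ge 1$ since $a\ge 0$, so your conclusion stands.

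You can also avoid the change of quantization altogether. Since $\langle\xi\rangle^{1/2}\le m$, one has $S(1,g)\subset S^0_{1/2,1/2}$, and Theorem~\ref{theo:hounie-L2} then gives the $L^2$ bound for $\sigma(x,D)$ directly.
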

\subsection{Function spaces} 
In this subsection, we will include the definitions of the function spaces that will be studied in this paper.
\begin{definition}
    Let $0<p<\infty.$ We say that $u$ belongs in the Lorentz space $L^{p,q}:=L^{p,q}(\mathbb{R}^n),$ if for $0<q<\infty$, we have that
    \begin{equation*}
    \|u\|_{L^{p,q}} :=  \left( p\int_0^\infty \lambda^q \left|\left\{ x\in \mathbb{R}^n: |u(x)|>\lambda 
    \right\}\right|^{q/p} \frac{\diff t}{t}
    \right)^{1/q} < \infty,
\end{equation*}
    and for $q=\infty$, we have that 
    \begin{equation*}
    \|u\|_{L^{p,\infty}} :=  \sup_{\lambda>0}\lambda^q \left|\left\{ x\in \mathbb{R}^n: |u(x)|>\lambda 
    \right\}\right|  < \infty.
\end{equation*}
    It is a known fact that $L^{p,p}$ coincides with the Lebesgue space $L^p:=L^p(\mathbb{R}^n).$
\end{definition}
Triebel \cite[p.~134]{triebel} proved a real interpolation result for these spaces. We record it in the following statement.
\begin{proposition}\label{pro:lorentz-real-interpolation}
    Let $0<\theta<1$, let $1< p_0,p_1<\infty$ so that $p_0\ne p_1$. Let $1\le q_0,q_1,q\le\infty$ and let 
    \begin{equation*}
        \frac{1}{p}=\frac{1-\theta}{p_0} + \frac{\theta}{p_1}.
    \end{equation*}
    Then 
    \begin{equation*}
        (L^{p_0,q_0}, L^{p_1,q_1})_{\theta,q} = L^{p,q}
    \end{equation*}
    where $(\cdot,\cdot)_{\theta,q}$ is the real interpolation functor.
\end{proposition}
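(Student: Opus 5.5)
The plan is to prove the proposition by the standard route through the $K$-functional and the reiteration theorem. First we realize each Lorentz space $L^{p,q}$ ($1<p<\infty$, $1\le q\le\infty$) as a real interpolation space between two fixed Lebesgue spaces. The second interpolation step is then handled by abstract reiteration.

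\textbf{Step 1: the $K$-functional.} For $f\in L^1+L^\infty$ we use the exact formula
\begin{equation*}
K(t,f;L^1,L^\infty)=\int_0^t f^*(s)\,\diff s = t f^{**}(t),
\end{equation*}
where $f^*$ is the decreasing rearrangement. It follows by splitting $f$ at the level $f^*(t)$: the truncated part lies in $L^\infty$, and the excess lies in $L^1$ and is supported on a set of measure at most $t$. For the lower bound we use $\int_0^t f^*\le \int_0^t g_0^*+t\|g_1\|_\infty$ for any decomposition $f=g_0+g_1$.

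\textbf{Step 2: Lorentz spaces as interpolation spaces.} Inserting this formula into the definition of $(L^1,L^\infty)_{\eta,q}$ with $\eta=1-1/p$ gives
\begin{equation*}
\|f\|_{(L^1,L^\infty)_{\eta,q}}\asymp\Bigl(\int_0^\infty \bigl(t^{1/p}f^{**}(t)\bigr)^q\frac{\diff t}{t}\Bigr)^{1/q}.
\end{equation*}
The right-hand side is equivalent to the $L^{p,q}$ quasi-norm written with $f^*$, that is, $\bigl(\int_0^\infty (t^{1/p}f^*(t))^q\,\diff t/t\bigr)^{1/q}$, because:
\begin{itemize}
\item the inequality $f^*\le f^{**}$ gives one direction;
\item Hardy's inequality $\int_0^\infty \bigl(t^{1/p-1}\int_0^t\phi\bigr)^q\frac{\diff t}{t}\lesssim \int_0^\infty (t^{1/p}\phi(t))^q\frac{\diff t}{t}$, valid exactly because $p>1$, gives the other.
\end{itemize}
We also check that this is the same quantity as the distribution-function expression in the paper's definition. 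This follows by the change of variables $\lambda=f^*(t)$, using the layer-cake identity between $f^*$ and $\lambda\mapsto|\{|f|>\lambda\}|$. Hence $L^{p,q}=(L^1,L^\infty)_{1-1/p,\,q}$ with equivalent norms, for all $1<p<\infty$ and $1\le q\le\infty$.

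\textbf{Step 3: reiteration.} Set $\theta_j=1-1/p_j\in(0,1)$, $j=0,1$. Since $p_0\neq p_1$, we have $\theta_0\neq\theta_1$. By Step 2, $L^{p_j,q_j}=(L^1,L^\infty)_{\theta_j,q_j}$, and these spaces are of class $\theta_j$ in the couple $(L^1,L^\infty)$. The reiteration theorem (via Holmstedt's estimate) then gives
\begin{equation*}
(L^{p_0,q_0},L^{p_1,q_1})_{\theta,q}=(L^1,L^\infty)_{\eta,q},\qquad \eta=(1-\theta)\theta_0+\theta\theta_1 .
\end{equation*}
A direct computation gives $1-\eta=(1-\theta)/p_0+\theta/p_1=1/p$. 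Applying Step 2 once more identifies the right-hand side with $L^{p,q}$.

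\textbf{Main obstacle.} The delicate point is Step 2, namely replacing $f^{**}$ by $f^*$ uniformly in $q\in[1,\infty]$ via Hardy's inequality. This is where the restriction $p_0,p_1>1$ enters, since for $p=1$ the two quantities are not comparable. The endpoint $q=\infty$ needs a separate sup-version of the Hardy bound. The reiteration step itself is abstract and only needs $\theta_0\neq\theta_1$, which is guaranteed by $p_0\neq p_1$.
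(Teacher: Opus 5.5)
Your proposal is correct. The paper gives no proof of its own and simply cites Triebel (p.~134). Your route is essentially the standard one from that reference: the exact formula $K(t,f;L^1,L^\infty)=\int_0^t f^*(s)\,\diff s$, the identification $L^{p,q}=(L^1,L^\infty)_{1-1/p,\,q}$ via Hardy's inequality for $p>1$, and then the reiteration theorem with $\theta_0\neq\theta_1$.
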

On the other hand, let $\varrho(\cdot,\cdot)$ be the sub-Riemannian distance as in Definition \ref{def:varrho}. Since the distance $\varrho(\cdot,\cdot)$ satisfies the doubling condition,  we may use the atomic construction of Hardy spaces as in Coifman and Weiss \cite[p.~591]{CoifmanWeiss1977}.
\begin{definition}
    For $0< p<q,$ and $p\le 1\le q\le \infty.$ we say a complex valued function $b$ is a $(p,q)$-\emph{atom} related to the sub-unitary ball $B(z, R)$ if it satisfies the following properties:
    \begin{enumerate}
        \item $\supp b \subset B(z, R) ,$
        \item $\|b\|_{L^q} \le  R^{Q(1/q-1/p)},$ 
        \item $\int b(x)\diff x =0.$
    \end{enumerate}
    We say that $f\in H^{p,q}_\varrho:=H^{p,q}_\varrho(\mathbb{R}^n), $ if  there exist a sequence of $(p,q)$-atoms $b_j$ and a sequence $\lambda_j\in\ell^p$ such that $u$ can be decomposed, namely
    \begin{equation}\label{eq:decomposition}
        u= \sum_j\lambda_jb_j.
    \end{equation}
    Moreover, we define its $H^{p,q}_\varrho$ norm by taking the infimum of 
    \begin{equation*}
        \left(\sum_j|\lambda_j|^p
        \right)^{1/p}
    \end{equation*}
    over all such decompositions as in \eqref{eq:decomposition}. In \cite[Theorem~A]{CoifmanWeiss1977}, it was proved that the spaces $H^{p,q_1}_\varrho$ and $H^{p,q_2}_\varrho$ are equivalent. Thus, $q$ may be chosen as convenient in each particular situation.
\end{definition}
\begin{remark}
    Hardy spaces may also be defined using maximal functions as in Mac\'ias and Segovia \cite{MaciasSegovia}. First, recall that 
    \begin{equation*}
        |B(x, R)| \asymp R^{Q}.
    \end{equation*}
    Thus, we have that 
    \begin{equation*}
        \left|\varrho(x, z)^{Q}-\varrho(y, z)^{Q}\right| \le C(R^{Q})^{1-1/Q}(\varrho(x, y)^{Q})^{1/Q},
    \end{equation*}
    whenever $x,y\in B(z, R^{1/Q}).$ Namely, we have that our space is a normal space of degree $1/Q,$ see \cite{MaciasSegovia}. Now, we define the Lipschitz space $\mathrm{Lip}(\beta)$ as the set of functions $\phi$ such that 
    \begin{equation*}
        |\phi(x)-\phi(y)|\le C\varrho(x, y)^{\beta Q}.
    \end{equation*}
    We denote the smallest constant $C$ that satisfies this condition as $\|\phi\|_\beta.$ Moreover, we say that $\phi \in T_\gamma(x)$ for $0<\gamma<1/Q,$ if $\phi\in\mathrm{Lip}(\beta)$ for all $\beta\le 1/Q,$ if there exists $R>0$ such that $\supp \phi \subset B(x, R),$ and 
    \begin{equation*}
        R^{Q}\|\phi\|_{L^\infty}\le1, \quad R^{(1+\gamma)Q}\|\phi\|_\gamma \le 1.
    \end{equation*}
    We define the $\gamma$-maximal function of a suitable distribution as (see \cite{MaciasSegovia})
    \begin{equation*}
        f^*_\gamma(x) := \sup \{|\langle f,\phi\rangle|:\phi\in T_\gamma(x)\}.
    \end{equation*}
    In \cite[Theorem~5.9]{MaciasSegovia}, it was proved that a function $f\in H^p_\varrho,$ if and only if, $f^*_\gamma \in L^p,$ whenever $1/(1+\gamma)< p\le 1$. Moreover, we have that 
    \begin{equation*}
        \|f\|_{H^p} \asymp \|f^*_\gamma\|_{L^p}.
    \end{equation*}
    Thus, we require that $p>Q/(Q+1).$
\end{remark}

\subsection{Auxiliary results}
Now, we state the $L^2$-boundedness Theorem by Hounie \cite{hounie}. This result is interesting as it provides an extension of the classical Calderón-Vaillancourt Theorem, see \cite{CV}.
\begin{theorem}\label{theo:hounie-L2}
    Let $a:=a(x,\zeta)$ be a function in $\mathbb{R}^n\times\mathbb{R}^n$ so that for $|\alpha|,|\beta|\leq \lfloor n/2+1\rfloor$ the derivatives $\partial^\alpha_\zeta \partial^\beta_x a$ are continuous and satisfy 
    \begin{equation*}
        \left|\partial^\alpha_\zeta \partial^\beta_x a(x,\zeta)\right| \leq C\langle\zeta\rangle^{m-\rho|\alpha|+\delta|\beta|},
    \end{equation*}
    where $0\leq\delta<1$, $0<\rho\leq1$, so that 
    \begin{equation*}
        m \leq -n\max \{ 0, (\delta-\rho)/2 \}.
    \end{equation*}
    Then $a(X, D)$ is bounded in $L^2(\mathbb{R}^n)$ with norm proportional to the best bound $C$.
\end{theorem}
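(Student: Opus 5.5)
This is Hounie's $L^2$ theorem (Theorem~\ref{theo:hounie-L2}), quoted from the literature. The plan follows the standard almost-orthogonality route of Calder\'on--Vaillancourt, adapted to exotic $(\rho,\delta)$ indices with $\delta$ possibly larger than $\rho$ and only finitely many derivatives.

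\textbf{Reduction to $m=-n\max\{0,(\delta-\rho)/2\}$.} It suffices to treat the extreme case $m=-n\max\{0,(\delta-\rho)/2\}$, since lowering $m$ only strengthens the hypothesis. We may also assume $a$ is compactly supported in $\zeta$: once the bound is uniform in the support, a limiting argument removes this assumption.

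\textbf{Dyadic decomposition.} Take a Littlewood--Paley partition of unity $1=\sum_{k\ge0}\psi_k(\zeta)$, with $\psi_k$ supported where $|\zeta|\asymp 2^k$, and set $a_k=a\psi_k$.

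\textbf{Rescaling each piece.} For each $k$, cover $\mathbb{R}^n_x$ by cubes of side $2^{-k\delta}$ and the annulus $|\zeta|\asymp 2^k$ by cubes of side $2^{k\rho}$. Let $\chi_{Q}(x)\varphi_{Q'}(\zeta)$ be a smooth partition subordinate to these cubes. After the change of variables $x=2^{-k\delta}x'$, $\zeta=2^{k\rho}\zeta'$, each localized piece becomes a symbol whose derivatives of order $\le\lfloor n/2+1\rfloor$ in each variable are bounded by $C2^{km}$. Here the hypotheses $\delta<1$ and $\rho>0$ ensure the rescaling is legitimate.

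\textbf{Almost orthogonality.} Apply the Cotlar--Stein lemma to the family $T_{Q,Q'}$. Integrating by parts in $\zeta$ with $N\le\lfloor n/2+1\rfloor$ derivatives controls the $x$-separation. Integrating by parts in $x$ controls the $\zeta$-separation. This gives kernel decay of order $(1+\mathrm{dist})^{-N}$ with $N>n/2$. That exponent is not integrable by itself, so the decay must be combined with the Cauchy--Schwarz/Plancherel device of Hounie (and Cordes--Kato): estimate
\[
\|T f\|_{L^2}\lesssim \|(1+|x-y|^2)^{N/2}K\|_{L^2_y}\,\|(1+|x-y|^2)^{-N/2}f\|_{L^2_y},
\]
so that only $N>n/2$ derivatives are needed.

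\textbf{Counting overlaps (main obstacle).} When $\delta>\rho$, the phase-space boxes of side $2^{-k\delta}\times 2^{k\rho}$ have volume $2^{k(\rho-\delta)n}<1$, so the localized operators are not individually uniformly bounded. Each has norm of order $2^{km}\,2^{kn(\delta-\rho)/2}$, and the choice $m=-n(\delta-\rho)/2$ exactly compensates for this. When $\delta\le\rho$, the norm is just $2^{km}\le1$.

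\textbf{Summing over $k$.} The pieces for different $k$ are orthogonal in frequency up to finite overlap of the $\psi_k$, and the $x$-derivatives do not destroy this. The resulting square-function estimate then gives $\|a(X,D)\|_{L^2\to L^2}\lesssim C$, linear in the best constant $C$.

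The delicate point is the case $\rho<\delta$ with only $\lfloor n/2+1\rfloor$ derivatives available. I would first try to handle it through Hounie's sharper lemma: an operator whose symbol has $n/2+$ derivatives in $L^2$-Sobolev sense is bounded, with norm controlled by those derivatives. This would be applied to the rescaled pieces, so that Cotlar--Stein needs no extra derivatives.
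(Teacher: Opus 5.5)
The paper gives no proof of Theorem~\ref{theo:hounie-L2}. It quotes the result from Hounie~\cite{hounie} and only uses the case $\rho=\delta=1/2$, $m=0$. Your outline therefore has to stand on its own, and as written it does not: the two points that make the theorem nontrivial are left open.

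\emph{Derivative count.} Cotlar--Stein needs $\sup_i\sum_j\|T_i^*T_j\|^{1/2}<\infty$. The square root halves the decay $(1+\mathrm{dist})^{-N}$ you get from $N$ integrations by parts. Summing that over the $n$-dimensional lattice of neighbouring boxes then requires $N>2n$ derivatives in each variable, not $\lfloor n/2+1\rfloor$. The Cauchy--Schwarz/Plancherel inequality you display bounds one operator through its $\zeta$-derivatives; it does nothing for the off-diagonal sums in Cotlar--Stein. Your fallback, ``Hounie's sharper lemma'', is essentially the statement itself, so the argument becomes circular exactly where it matters.

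\emph{The loss for $\delta>\rho$.} The change of variables $x=2^{-k\delta}x'$, $\zeta=2^{k\rho}\zeta'$ is not symplectic unless $\rho=\delta$: $e^{ix\cdot\zeta}$ becomes $e^{i2^{k(\rho-\delta)}x'\cdot\zeta'}$. So bounds $C2^{km}$ on the rescaled symbols give no operator bound. The mechanism you state is also wrong. A single piece $\chi_Q(x)a_k(x,D)\varphi_{Q'}(D)$ has Hilbert--Schmidt norm $\lesssim C2^{km}(|Q|\,|Q'|)^{1/2}=C2^{km}2^{kn(\rho-\delta)/2}$, which is small. The factor $2^{kn(\delta-\rho)/2}$ comes from overlap: the $x$-dependence at scale $2^{-k\delta}$ spreads each output over frequencies of size $2^{k\delta}\gg 2^{k\rho}$, i.e.\ over about $2^{kn(\delta-\rho)}$ neighbouring boxes.

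One way to close both gaps with an integer $N>n/2$ (within the allowed range) is to drop the $x$-localization and Cotlar--Stein, and use Cauchy--Schwarz in the \emph{output} frequency. If $\rho>\delta$, first lower $\rho$ to $\delta$; the hypotheses only weaken, and the argument below does not need $\rho>0$. So assume $\rho\le\delta<1$ and $m=-n(\delta-\rho)/2$. Write $a=\sum_{k,j}a\psi_k\varphi_{kj}$, with $\varphi_{kj}$ adapted to cubes of side $2^{k\rho}$ centred at $\xi_{kj}$. Let $T_{kj}$ be the corresponding operators and put $w_{kj}(\eta)=(1+2^{-2k\delta}|\eta-\xi_{kj}|^2)^{N/2}$. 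Then
\[
\Big\|\sum_{k,j}T_{kj}f\Big\|_{L^2}^2\le \sup_{\eta}\Big(\sum_{k,j}2^{2km}w_{kj}(\eta)^{-2}\Big)\sum_{k,j}2^{-2km}\|w_{kj}(D)T_{kj}f\|_{L^2}^2 .
\]
At the level $2^k\approx|\eta|$ the lattice sum is $\approx 2^{kn(\delta-\rho)}=2^{-2km}$. This is precisely where the loss enters and where $m$ compensates. The remaining levels contribute geometric series because $2N>n$ and $\delta<1$, so the first factor is bounded.

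Next, $w_{kj}(D)T_{kj}$ is controlled by operators with symbols $2^{-k\delta|\beta|}(\zeta-\xi_{kj})^{\beta_1}\partial_x^{\beta_2}(a\psi_k\varphi_{kj})$, $|\beta_1+\beta_2|\le N$. These are $O(C2^{km})$, supported in the cube, and lose $2^{-k\rho}$ per $\zeta$-derivative. Your Cauchy--Schwarz device with weight $(1+2^{2k\rho}|x-y|^2)^{N/2}$ then gives $\|w_{kj}(D)T_{kj}f\|\lesssim C2^{km}\|\tilde\varphi_{kj}(D)f\|$. Finite overlap of the $\tilde\varphi_{kj}$ finishes the proof. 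It uses only $\partial^\alpha_\zeta\partial^\beta_x a$ with $|\alpha|,|\beta|\le N$, and gives a bound linear in $C$.
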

Next we record the result from \'Alvarez and Milman \cite{alvarez-milman}, which we will use to prove weak (1,1) boundedness for a pertinent class $S(m^{-\tau},g)$ after obtaining the required kernel estimates. 
\begin{theorem}\label{theo:alvarez-milman}
    Let $T:C^\infty(\mathbb{R}^n)\to \mathcal{D}'(\mathbb{R}^n)$ be an integral kernel operator that extends to a bounded operator from $L^2(\mathbb{R}^n)$ to itself and from $L^q(\mathbb{R}^n)$ to $L^2(\mathbb{R}^n)$ such that for some $\alpha$ and $\beta$, we have that 
    \begin{equation*}
        \frac{1}{q}=\frac{1}{2} + \frac{\beta}{n}, \quad \frac{n}{2}(1-\alpha)\le \beta < \frac{n}{2}.
    \end{equation*}
    Moreover, assume that its kernel $k(x, y)$ satisfies the following estimates
    \begin{equation}\label{eq:alvarez-milman-R<1}
        \forall R<1, \sup_{|x-y|<R} \int_{|x-z|>cR^\alpha} |k(x, y)-k(x,z)|\diff x \le C.
    \end{equation}
    \begin{equation}\label{eq:alvarez-milman-R>1}
        \forall R\ge1,\sup_{|x-y|<R} \int_{|x-z|>cR} |k(x, y)-k(x,z)|\diff x \le C.
    \end{equation}
    Then, the operator $T$ is bounded from $L^1(\mathbb{R}^n)$ to $L^{1,\infty}(\mathbb{R}^n).$
\end{theorem}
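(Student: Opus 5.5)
The plan is to run a Calder\'on--Zygmund decomposition at height $\lambda$ and to treat large and small cubes differently. Following Fefferman's argument for strongly singular integrals, the small cubes are handled by a smoothing step, and the $L^q\to L^2$ hypothesis pays for the enlarged exceptional region $|x-z|\le cR^\alpha$ in \eqref{eq:alvarez-milman-R<1}.

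\textbf{Decomposition and the good part.} Fix $f\in L^1\cap L^2$ and $\lambda>0$. Decompose $f=g+\sum_j b_j$, where:
\begin{itemize}
\item $|g|\lesssim\lambda$ and $\|g\|_{L^1}\le\|f\|_{L^1}$;
\item each $b_j$ is supported in a cube $Q_j$ with centre $z_j$ and side $R_j$, has $\int b_j=0$, and satisfies $\|b_j\|_{L^1}\lesssim\lambda|Q_j|$;
\item $\sum_j|Q_j|\lesssim\lambda^{-1}\|f\|_{L^1}$.
\end{itemize}
The good part is handled by the $L^2$ boundedness:
\[
|\{|Tg|>\lambda\}|\le\lambda^{-2}\|g\|_2^2\lesssim\lambda^{-1}\|f\|_1 .
\]
Split the bad part as $\sum_{R_j\ge1}b_j+\sum_{R_j<1}b_j$.

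\textbf{Large cubes.} For $R_j\ge1$ the argument is the classical one. Remove $E_1=\bigcup_{R_j\ge1}Q_j^*$, where $Q_j^*$ is the $cR_j$-dilate; its measure is $\lesssim\sum|Q_j|$. Outside $E_1$, write
\[
Tb_j(x)=\int\bigl(k(x,y)-k(x,z_j)\bigr)b_j(y)\,\diff y,
\]
using $\int b_j=0$. Fubini and \eqref{eq:alvarez-milman-R>1} give $\|Tb_j\|_{L^1((Q_j^*)^c)}\lesssim\|b_j\|_1$, and Chebyshev concludes.

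\textbf{Small cubes.} For $R_j<1$ the natural exceptional set has radius $R_j^\alpha\gg R_j$ when $\alpha<1$. Its measure $\sum R_j^{n\alpha}$ is not controlled by $\sum|Q_j|$, so it cannot simply be discarded. I would therefore follow Fefferman and split $b_j=\tilde b_j+(b_j-\tilde b_j)$, where $\tilde b_j=b_j*\phi_{j}$ is a mollification at a scale $\varepsilon_j$ chosen in terms of $R_j$ and $\alpha$. The choice should make two things true at once.

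First, $\tilde b:=\sum_j\tilde b_j$ should have a good $L^q$ bound. Since the mollifications essentially do not overlap (the $Q_j$ are disjoint) and each satisfies $\|\tilde b_j\|_\infty\lesssim\lambda|Q_j|\varepsilon_j^{-n}$, one can interpolate $L^1$ and $L^\infty$ to get $\|\tilde b\|_q^q\lesssim\lambda^{q-1}\|f\|_1$. The hypothesis $L^q\to L^2$ with $\frac1q=\frac12+\frac\beta n$ then gives
\[
|\{|T\tilde b|>\lambda\}|\le\lambda^{-2}\|\tilde b\|_q^2\lesssim\lambda^{-1}\|f\|_1 .
\]
Obtaining the exponent $2/q$ correctly uses $\beta\ge\frac n2(1-\alpha)$.

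Second, the differences $b_j-\tilde b_j$ are still supported near $Q_j$ and have mean zero. Their contribution away from the standard dilates $Q_j^*$ is estimated through \eqref{eq:alvarez-milman-R<1}, with the region $cR_j\le|x-z_j|\le cR_j^\alpha$ absorbed using the $L^2$ bound together with the smoothing. The remaining piece $T\tilde b_j$ on that annulus is already covered by the $L^q\to L^2$ estimate above.

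\textbf{Main obstacle.} I expect the main difficulty to be the small-cube step: choosing the mollification scale so that the $L^q\to L^2$ gain exactly compensates the loss from the $R^\alpha$-enlarged region. That balancing is where the constraint $\frac n2(1-\alpha)\le\beta<\frac n2$ enters. I would first try $\varepsilon_j=R_j^{\alpha}$ and check the two exponent bookkeeping inequalities; if they fail, I would adjust to an intermediate scale $R_j^{\gamma}$ with $\alpha\le\gamma\le1$. Summing the three contributions (good part, large cubes, small cubes) gives the weak $(1,1)$ bound.
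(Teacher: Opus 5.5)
The paper gives no proof of this statement. It is recorded from \'Alvarez and Milman, whose argument follows Fefferman's method for strongly singular integrals. Your treatment of the good part and of the cubes with $R_j\ge 1$ is the standard one and is fine. The small-cube step, however, is the entire content of the theorem, and it has a genuine gap.

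\textbf{The smoothed part.} The bound for $\tilde b=\sum_j\tilde b_j$ does not give weak type. From $\|\tilde b\|_{L^q}^q\lesssim\lambda^{q-1}\|f\|_{L^1}$ you only get
\[
\lambda^{-2}\|\tilde b\|_{L^q}^2\lesssim\lambda^{-2}\bigl(\lambda^{q-1}\|f\|_{L^1}\bigr)^{2/q}=\Bigl(\frac{\|f\|_{L^1}}{\lambda}\Bigr)^{2/q},
\]
with $2/q>1$. No choice of $\varepsilon_j$ repairs this. Take $N$ identical, widely separated cubes: then $\|\tilde b\|_{L^q}^2\sim N^{2/q}$, while $\lambda\|f\|_{L^1}\sim N$. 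A global $L^q\to L^2$ bound is not additive over cubes, so it has to be used cube by cube, through Cauchy--Schwarz on the enlarged ball. If $h$ is supported in $CQ_j$ and $|h|\lesssim\lambda$, then
\[
\int_{B(z_j,cR_j^{\alpha})}|Th|\,\diff x\le|B(z_j,cR_j^{\alpha})|^{1/2}\|Th\|_{L^2}\lesssim R_j^{n\alpha/2}\|h\|_{L^q}\lesssim\lambda R_j^{\frac n2(1+\alpha)+\beta}\le\lambda|Q_j| .
\]
This is exactly where $\beta\ge\frac n2(1-\alpha)$ enters. Your claim that the mollified pieces ``essentially do not overlap'' is also false for $\varepsilon_j=R_j^\gamma\gg R_j$: they live on balls of radius $R_j^\gamma$, which can overlap heavily. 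An overlap-counting argument does give $\|\tilde b\|_{L^2}^2\lesssim\lambda\|f\|_{L^1}$, so plain $L^2$ boundedness handles $\tilde b$. But then the $L^q\to L^2$ hypothesis has not been used at all.

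\textbf{The rough part.} This is the more serious problem. The function $b_j-\tilde b_j$ is as singular as $b_j$: you only have $L^1$ control and it is unbounded. For $\varepsilon_j\ge R_j$ it is spread over a ball of radius $\varepsilon_j$, so the kernel condition controls $T(b_j-\tilde b_j)$ at best outside $B(z_j,cR_j^\alpha)$. On the annulus $cR_j\le|x-z_j|\le cR_j^\alpha$, whose union over $j$ cannot be discarded, you propose to absorb it ``using the $L^2$ bound together with the smoothing.'' This piece is neither smooth nor bounded in $L^2$ or $L^q$, so neither Cauchy--Schwarz argument applies. Since $T\tilde b_j+T(b_j-\tilde b_j)=Tb_j$, the decomposition has simply moved the original difficulty into this term.

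Smoothing helps the rough part only at scales $\varepsilon\le R_j^{1/\alpha}$, the opposite of your range $R_j\le\varepsilon_j\le R_j^\alpha$. There, writing $T(b_j-b_j*\phi_\varepsilon)(x)=\iint[k(x,y)-k(x,y+w)]\,\phi_\varepsilon(w)\,b_j(y)\,\diff w\,\diff y$ gives control off $|x-z_j|\lesssim R_j$. But then the smooth part has height $\lambda R_j^{n(1-1/\alpha)}$. At $\beta=\frac n2(1-\alpha)$ the local estimate above then only yields $\lambda|Q_j|R_j^{-\frac n2(1-\alpha)}$, which blows up as $R_j\to 0$. Resolving this tension between the two scales is the core of Fefferman's argument, and your proposal leaves it open.
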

Finally, we include the boundedness properties of Bessel potentials.
\begin{theorem}
    Let us define the Bessel potential of order $s\in\mathbb{R}$, denoted by $\mathfrak{B}^s,$ as the pseudo-differential operator with symbol given by $\langle\xi\rangle^s.$ Let $1<p<q<\infty,$ such that 
    \begin{equation*}
        s = -n\left(\frac{1}{q}-\frac{1}{p}\right).
    \end{equation*}
    Then, we have that $\mathfrak{B}^s$ is bounded from $L^p$ to $L^q.$
\end{theorem}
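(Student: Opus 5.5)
We prove this by reducing to the Hardy--Littlewood--Sobolev inequality for the Riesz potential, since the Bessel potential is dominated by it at short range and decays exponentially at long range. Concretely, for $s<0$ we write $\mathfrak{B}^s f = G_{-s} * f$, where $G_{-s}$ is the Bessel kernel, i.e. the inverse Fourier transform of $\langle\xi\rangle^{s}$. Setting $a:=-s=n(1/p-1/q)\in(0,n)$, since $p<q$, the standard subordination formula
\begin{equation*}
  \langle\xi\rangle^{-a}=\frac{1}{\Gamma(a/2)}\int_0^\infty e^{-t(1+|\xi|^2)}\,t^{a/2}\,\frac{\diff t}{t}
\end{equation*}
shows that $G_{a}$ is a positive average of Gaussians. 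In particular $G_a\ge 0$, and we will establish the pointwise bound
\begin{equation*}
  G_a(x)\lesssim |x|^{a-n}\ \ (|x|\le 2),\qquad G_a(x)\lesssim e^{-c|x|}\ \ (|x|\ge 2).
\end{equation*}
Both bounds follow from the representation $G_a(x)=c_a\int_0^\infty e^{-t}e^{-|x|^2/(4t)}t^{(a-n)/2}\,\diff t/t$. For small $|x|$ we substitute $t=|x|^2 u$ and bound the factor $e^{-t}$ by $1$. For large $|x|$ we use the inequality $t+|x|^2/(4t)\ge |x|$.

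From this bound we get $G_a(x)\lesssim |x|^{a-n}$ globally, because $e^{-c|x|}\lesssim |x|^{a-n}$ for $|x|\ge2$. Hence $|\mathfrak{B}^{-a}f|\le G_a*|f|\lesssim I_a|f|$, where $I_a f=|\cdot|^{a-n}*f$ is the Riesz potential. The Hardy--Littlewood--Sobolev inequality states that $I_a:L^p\to L^q$ is bounded precisely when $1<p<q<\infty$ and $1/q=1/p-a/n$, which is exactly the relation $s=-n(1/q-1/p)$ in the statement. This yields $\|\mathfrak{B}^sf\|_{L^q}\lesssim\|f\|_{L^p}$, first for $f\in\mathcal{S}$, and then by density for all of $L^p$.

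The only substantive step is the Hardy--Littlewood--Sobolev inequality, which we take as classical. Its standard proof goes as follows: split the kernel at a radius $R$; bound the local part by $R^{a}Mf$, where $M$ is the Hardy--Littlewood maximal function; bound the tail by Hölder's inequality as $R^{a-n/p}\|f\|_{L^p}$; optimize in $R$ to obtain $I_af\lesssim (Mf)^{p/q}\|f\|_p^{1-p/q}$; and finish with the $L^p$ boundedness of $M$. The one point needing care is that $p>1$ is necessary here, and it is assumed in the statement. The exponential tail of $G_a$ is not needed for this estimate; it would only matter if one also wanted the trivial case $q=p$, or $L^p\to L^q$ bounds with $a>n(1/p-1/q)$ via Young's inequality. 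Those cases are not part of the statement.
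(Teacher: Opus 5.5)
The paper gives no proof of this statement; it is recorded among the auxiliary results as a classical fact. Your argument is the standard proof and is mathematically sound for what it actually proves. It runs as follows: subordination gives $G_a\ge 0$ and $G_a(x)\lesssim |x|^{a-n}$ (the substitution $t=|x|^2u$ with $e^{-t}\le 1$ already gives this for all $x$ once $0<a<n$); this yields the pointwise domination $|\mathfrak{B}^{-a}f|\lesssim I_a|f|$; and Hardy--Littlewood--Sobolev, via Hedberg's maximal-function argument, finishes it.

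\textbf{The sign.} Your claim that the HLS relation is ``exactly the relation $s=-n(1/q-1/p)$ in the statement'' is false.
\begin{itemize}
\item As printed, $p<q$ gives $s=-n(1/q-1/p)=n(1/p-1/q)>0$. Hence $-s=n(1/q-1/p)<0$, not $n(1/p-1/q)$, and your opening ``for $s<0$'' contradicts the hypothesis.
\item With $a=-s$, the HLS condition $1/q=1/p-a/n$ reads $s=n(1/q-1/p)$, which is the negative of the printed formula.
\end{itemize}

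The statement as printed is in fact false. For $s>0$ the symbol $\langle\xi\rangle^s$ has positive order. Test it on $f_\lambda(x)=e^{i\lambda x_1}\phi(x)$ with $\widehat{\phi}$ compactly supported. Then $\|f_\lambda\|_{L^p}=\|\phi\|_{L^p}$ for every $\lambda$, while $\|\mathfrak{B}^sf_\lambda\|_{L^q}\asymp\lambda^{s}\to\infty$ as $\lambda\to\infty$.

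What you have proved is the corrected version: $\mathfrak{B}^{-s}$ is bounded from $L^p$ to $L^q$ when $s=n(1/p-1/q)$. This is precisely the form the paper actually uses in Theorem~\ref{theo:Lq-L2}, where $\mathfrak{B}^{-s}$ with $s=n(1/q-1/2)$ maps $L^q$ to $L^2$. You should say explicitly that the printed sign is a typo and that you are proving the corrected statement, rather than asserting that your relation coincides with the hypothesis.
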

\section{Regularity properties for $S(m, g)$-classes revisited}
\label{section:main}
In this section we investigate the mapping properties for pseudo-differential operators for the classes $S(m,g).$ The next subsection will be dedicated to the analysis of some geometric estimates.
\subsection{Geometric properties}\label{Geometric:properties}
First, we are interested in the size of the phase space. We follow the strategy of \cite[Appendix~1]{Delgado2016}
\begin{lemma}\label{lem:phase-space-size}
    For $t\ge 1$ and $z\in\mathbb{R}^n$, assume that 
  \begin{equation*}
      |a(x, \xi)+\langle\xi\rangle| \ge C\langle\xi\rangle^{\kappa},
  \end{equation*}
  for $\kappa \in [1,2].$ Then, we have that
    \begin{equation*}
        \int_{m(z, \xi)\asymp t} \diff \xi \leq Ct^{Q_\kappa},
    \end{equation*}
    with $C>0$ independent of $z$ and with $Q_\kappa:=r_0 + \frac{2}{\kappa}(n-r_0)$.
\end{lemma}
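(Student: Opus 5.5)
The plan is to estimate the measure of the set $\{\xi\in\mathbb{R}^n: m(z,\xi)\asymp t\}$ by placing it inside a box adapted to the quadratic form $\xi\mapsto a(z,\xi)=A(z)\xi\cdot\xi$, following \cite[Appendix~1]{Delgado2016}. Fix $z$ and use an orthogonal change of variables $\xi=U_z\eta$, with $U_z$ orthogonal, so that $A(z)=U_z\,\mathrm{diag}(\lambda_1(z),\dots,\lambda_n(z))\,U_z^{*}$ with $\lambda_1(z)\ge\lambda_2(z)\ge\cdots\ge\lambda_n(z)\ge 0$. This change has Jacobian $1$ and leaves $\langle\xi\rangle$ invariant, so it does not affect the volume. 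Then $a(z,\xi)=\sum_j\lambda_j(z)\eta_j^2$. Write $\eta=(\eta',\eta'')$, where $\eta'\in\mathbb{R}^{r_0}$ collects the coordinates of the $r_0$ largest eigenvalues and $\eta''\in\mathbb{R}^{n-r_0}$ the remaining ones.

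The first constraint comes from the upper bound $m(z,\xi)\lesssim t$, which gives $a(z,\xi)\le m^2\lesssim t^2$. Hence $\lambda_j(z)\eta_j^2\lesssim t^2$ for each $j\le r_0$. Provided $\lambda_{r_0}(z)\ge c>0$ uniformly in $z$, this yields $|\eta'|\lesssim t$, so the $\eta'$-variables range over a ball of volume $\lesssim t^{r_0}$.

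The second constraint comes from the hypoellipticity assumption. We have $C\langle\xi\rangle^{\kappa}\le a+\langle\xi\rangle=m^2\lesssim t^2$, so $|\eta''|\le|\xi|\le\langle\xi\rangle\lesssim t^{2/\kappa}$. The $\eta''$-variables therefore range over a ball of volume $\lesssim t^{2(n-r_0)/\kappa}$. By Fubini, the set $\{m(z,\cdot)\asymp t\}$ lies in a product of these two balls, so
\[
\int_{m(z,\xi)\asymp t}\diff\xi\lesssim t^{r_0}\,t^{\frac{2}{\kappa}(n-r_0)}=t^{Q_\kappa}.
\]
The constants depend only on $C$, $\kappa$, $n$ and the lower bound $c$, and not on $z$. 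Note that only the upper half of $m\asymp t$ is used; since $t\ge1$, the lower half is harmless.

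The main obstacle is the uniformity of the lower bound $\lambda_{r_0}(z)\ge c>0$. Knowing that $\mathrm{rank}\,A(z)\ge r_0$ at every point gives positivity of $\lambda_{r_0}(z)$ pointwise, but not uniformly on the non-compact space $\mathbb{R}^n$. I would first try to obtain it from the UHC. By \eqref{X:UHC}, $A(z)\xi\cdot\xi=\sum_j |X_j(z)\cdot\xi|^2$, and the UHC bounds determinants of spanning families from below by $c_0$. Together with the uniform bounds on the coefficients and their derivatives, this should force $r_0$ of the fields $X_j(z)$ to remain uniformly linearly independent, which is exactly $\lambda_{r_0}(z)\gtrsim 1$.

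If that deduction fails, I would make the uniform-rank condition explicit, as is implicitly done in \cite{Delgado2016}. This means assuming that the sum of the principal $r_0\times r_0$ minors of $A(z)$ is bounded below. Since $\lambda_1\le\|A\|_\infty$ is bounded, that assumption gives $\lambda_{r_0}\gtrsim1$ directly.
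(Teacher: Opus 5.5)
Your proof is correct and is essentially the paper's argument. The paper also diagonalizes $A(z)$ and confines the region to scale $t$ in the $r_0$ nondegenerate eigendirections and to scale $t^{2/\kappa}$ in the others. It packages this through the anisotropic quasi-norm $\|\cdot\|$, the dilations $\delta_t$ (whose Jacobian is $t^{Q_\kappa}$) and an extra change of variables. Your box containment with an orthogonal $U_z$ reaches the same bound more directly, and it avoids having to control $|\det\theta^{-1}(z)|$ uniformly.

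The uniform bound $\lambda_{r_0}(z)\gtrsim 1$ that you flag is exactly what the paper takes for granted: its $\inf_z\sup_{J}\prod_{j\in J}\lambda_j\ge c$ is asserted directly from the pointwise rank condition. So making it an explicit hypothesis, as you propose, is if anything more careful than the original.
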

\begin{proof}
    Recall that $a_{ij}(z)$ are the entries of a positive semi-definite matrix of rank $r(z)\ge r_0$. Hence, there exists a non-singular matrix $\theta(z)$ such that 
    \begin{equation*}
        \sum_{i,j}a_{ij}(z)\xi_i\xi_j = \sum_{j=1}^{r(x)} \lambda_j(z)(\theta(x)\xi)_j^2,
    \end{equation*}
    where $\lambda_j$ are the eigenvalues of the matrix $a_{ij}$. Since this matrix is of rank at least $r_0$, then there exists $c>0$ such that 
    \begin{equation*}
        \lambda := \inf_{z\in\Omega}\sup_{J\in\mathcal{J}(z)}\prod_{j\in J} \lambda_j\ge c,
    \end{equation*}
    where 
    \begin{equation*}
        \mathcal{J}(z):= \{ J=(j_1,\ldots,j_{r_0}): 1\le j_1 \le \cdots \le j_{r_0} \le r(z)\}.
    \end{equation*}
    Then, for a fixed $z_0$, we may fix the same positive eigenvalues $\lambda_j(z)$ for a neighborhood of $z_0$. Moreover, we define 
    \begin{align*}
        \overline{\xi} &:= \theta(z)\xi ,\\
        \|\overline{\xi}\| &:= \left(\sum_{j=1}^{r_0}\overline{\xi}^{4/\kappa}_j + \sum_{j=r_0+1}^n \overline{\xi}_j^2\right)^{\kappa/4},\\
        \delta_t\overline{\xi}&:= (t\overline{\xi}_1,\ldots,t\overline{\xi}_{r_0},t^{2/\kappa}\overline{\xi}_{r_0+1}, \ldots, t^{2/\kappa}\overline{\xi}_{n}).
    \end{align*}
    From the properties of $\theta$ and $\lambda$, we have that
    \begin{equation*}
        c(1+\|\overline{\xi}\|) \le m(z, \theta^{-1}(z)\overline{\xi})\le Ct.
    \end{equation*}
    By setting, $\overline{\xi}:=\delta_t\xi'$, we have that 
    \begin{equation*}
        \|\xi'\|\le C.
    \end{equation*}
    Now, let us define $\xi'=h(\zeta)$, where
    \begin{equation*}
        h(\zeta):= \left(\zeta_1, \ldots, \zeta_{r_0},\zeta_{r_0+1}|\zeta_{r_0+1}|^{2/\kappa-1},\ldots, \zeta_n|\zeta_n|^{2/\kappa-1}\right)
    \end{equation*}
    to obtain that
    \begin{equation*}
        |\zeta|\le C_1.
    \end{equation*}
    Notice that the Jacobian of the transformations to obtain $\zeta$ from $\xi$ is 
    \begin{equation*}
        |\theta^{-1}(z)|t^{Q_\kappa}\left(\frac{2}{\kappa}\right)^{n-r_0}|\zeta_{r_0+1}\cdots\zeta_n|^{2/\kappa-1}.
    \end{equation*}
    On the other hand, we have that 
    \begin{align*}
        m(z, \theta^{-1}(z)\overline{\xi}) &\le C \left( \sum_{j=1}^{r_0}\overline{\xi}_j^2+\sum_{j=r_0+1}^{r(z)}\lambda_j(z)\overline{\xi}_j^2 + \sum_{j=r(z)}^n |\overline{\xi}_j|^\kappa \right)^{1/2} ,
    \end{align*}
    and for $\overline{\xi}:=\delta_{t}\xi'$, we have 
    \begin{align*}
        C & \le  \left( \sum_{j=1}^{r_0}(\xi'_j)^2+\sum_{j=r_0+1}^{r(z)}t^{4/\kappa-2}\lambda_j(z)(\xi'_j)^2 + |\xi'|^\kappa \right)^{1/2}.
    \end{align*}
    Now, let us set
    \begin{equation*}
        \xi''_j := \begin{cases}
            \xi'_j, & j\le r_0\text{ or } t^{4/\kappa-2}\lambda_j \le 1; \\
            t^{2/\kappa-1}\lambda_j(x)^{1/2}\xi'_j, & \text{otherwise.}
        \end{cases}
    \end{equation*}
    Thus, we have that
    \begin{equation*}
        C  \le  \left( \sum_{j=1}^{r_0}(\xi''_j)^2+\sum_{j=r_0+1}^{r(z)}(\xi''_j)^2 + |\xi''|^\kappa \right)^{1/2} = C \left( \sum_{j=1}^{r(z)}(\xi''_j)^2 + |\xi''|^\kappa \right)^{1/2}.
    \end{equation*}
    Finally, setting $\xi''=h(\zeta)$, 
    we obtain that
    \begin{equation*}
        C_{r(z_0)} \le |\zeta|.
    \end{equation*}
    Notice the Jacobian of this transformation is comparable to
    \begin{align*}
        &|\theta^{-1}(z)|t^{Q_\kappa}\left[\prod_{t^{4/\kappa-2}\lambda_j\ge 1} t^{1-2/\kappa}\lambda_j(z)^{-1/2}\right] \left(\frac{2}{\kappa}\right)^{n-r_0}|\zeta_{r_0+1}\cdots\zeta_n|^{2/\kappa-1}\\
        & \le |\theta^{-1}(z)|t^{Q_\kappa} \left(\frac{2}{\kappa}\right)^{n-r_0}|\zeta_{r_0+1}\cdots\zeta_n|^{2/\kappa-1}.
    \end{align*}
    Since $r(z_0)$ takes a finite number of values, we have that $|\zeta| \asymp 1$ independent of $z$. Therefore, we have that
    \begin{align*}
        \int_{m(z,\xi)\asymp t} \diff \xi & \le C \int_{|\zeta|\asymp 1}|\theta^{-1}(z)|t^{Q_\kappa} \left(\frac{2}{\kappa}\right)^{n-r_0}|\zeta_{r_0+1}\cdots\zeta_n|^{2/\kappa-1} \diff \zeta \\
        & \le Ct^{Q_\kappa}\int_{|\zeta|\asymp 1} |\zeta_{r_0+1}\cdots\zeta_n|^{2/\kappa-1} \diff \zeta \\
        &\le Ct^{Q_\kappa},
    \end{align*}
    where we used the fact that $|\zeta_j|\le |\zeta|\le C$ and the integral converges since the exponent satisfies $2/\kappa-1 \ge 0$.
\end{proof}
In the previous proof, we diagonalised the operator at the point $z\in\mathbb{R}^n.$ That means that we may rewrite $\xi=\xi'+\xi''$, where $\xi''$ is in the kernel of $A(z).$ We may use the dual projection in the space variable. Thus, we may rewrite, $y-z=\Theta(y, z)'+\Theta(y,z)'',$ such that 
\begin{equation*}
    (y-z)\cdot\xi = \Theta(y,z)'\cdot \xi' + \Theta(y, z)''\cdot\xi''.
\end{equation*}
We will be using this notation when convenient.
\begin{lemma}
    Let $R>0,$ if $\varrho(y, z)= R,$ then we have that 
    \begin{equation*}
        |\Theta(y,z)''| \le CR^2.
    \end{equation*}
\end{lemma}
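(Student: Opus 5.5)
We use the control-distance description of $\varrho$ from Definition \ref{def:varrho}. If $\varrho(y,z)=R$, then for every $\delta>R$ there is an absolutely continuous path $\varphi\in C(\delta)$ with $\varphi(0)=z$, $\varphi(1)=y$ and
\begin{equation*}
\varphi'(t)=\sum_j a_j(t)Y_j(\varphi(t)),\qquad |a_j(t)|<\delta^{d_j}.
\end{equation*}
Write $P''(z)$ for the orthogonal projection onto $\ker A(z)$. With the splitting fixed at the base point $z$ (diagonalisation of $A(z)$), $\Theta(y,z)''$ is the component of $y-z$ dual to $\xi''\in\ker A(z)$, so up to the bounded, uniformly invertible change of variables $\theta(z)$ we have $|\Theta(y,z)''|\lesssim |P''(z)(y-z)|$. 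It therefore suffices to estimate
\begin{equation*}
P''(z)(y-z)=\int_0^1\sum_j a_j(t)\,P''(z)Y_j(\varphi(t))\,\diff t .
\end{equation*}

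The key observation concerns degree-one fields. From $A=\sum_j X_j^*X_j$ we get $X_j(z)\in\operatorname{range}A(z)=(\ker A(z))^\perp$, so $P''(z)X_j(z)=0$ for every $X_j\in X^{(1)}$.

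For such a $Y_j$, the function $x\mapsto P''(z)Y_j(x)$ vanishes at $x=z$ and is Lipschitz, with constant controlled by the uniform bounds on the coefficients. Hence
\begin{equation*}
|P''(z)Y_j(\varphi(t))|\lesssim|\varphi(t)-z|.
\end{equation*}
Since every $Y_j$ is bounded, $|\varphi'(t)|\lesssim\sum_j\delta^{d_j}$. So for $\delta\le1$ we get $|\varphi(t)-z|\lesssim\delta$, and the degree-one terms contribute at most
\begin{equation*}
\int_0^1|a_j(t)|\,|\varphi(t)-z|\,\diff t\lesssim\delta\cdot\delta=\delta^2 .
\end{equation*}
For the commutators $Y_j$ with $d_j\ge2$ we use only boundedness: $|a_j(t)|\,|P''(z)Y_j|\lesssim\delta^{d_j}\le\delta^2$ when $\delta\le1$. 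Summing and letting $\delta\downarrow R$ gives $|\Theta(y,z)''|\le CR^2$ for $R\le1$.

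For $R\ge1$ we use a cruder estimate. The path gives $|y-z|\lesssim\sum_j\delta^{d_j}\lesssim\delta^r$. When $r\le2$ this is already $\lesssim R^2$, and since $|\Theta''|\lesssim|y-z|$ we are done.

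I expect two obstacles. The first is this large-$R$ regime when $r>2$. There I would argue that the statement is meant (and will be used) for $R\lesssim1$, where the local analysis above applies; alternatively I would restrict the lemma to $R\le1$. The second, more substantive, is the uniformity of the splitting $\xi=\xi'+\xi''$. The projection $P''(z)$ and the Lipschitz constant of $x\mapsto P''(z)Y_j(x)$ must be bounded independently of $z$, even though $\operatorname{rank}A(z)$ may jump. This is where I would invoke the UHC and the bounded geometry of the coefficients. The bound $|P''(z)X_j(x)|=|P''(z)(X_j(x)-X_j(z))|\le\|\nabla X_j\|_\infty|x-z|$ needs only that $P''(z)$ is an orthogonal projection, so it does not depend on the eigenvalue gaps of $A(z)$. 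Finally, the constant comparing $|\Theta''|$ with $|P''(z)(y-z)|$ is governed by $|\theta^{-1}(z)|$, which was already controlled in the proof of Lemma \ref{lem:phase-space-size}.
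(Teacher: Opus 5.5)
Your argument is correct for $R\le 1$, and for every $R$ when $r\le 2$. It does not prove the lemma as stated, however: for $R>1$ with $r>2$ you only propose to restrict the statement. The obstruction comes from your choice of paths. A path in $C(\delta)$ may move along degree-$d_j$ commutators with coefficients as large as $\delta^{d_j}$, and these outgrow $\delta^2$ once $\delta>1$. Your argument has no control on their component in $\ker A(z)$, and the fallback $|y-z|\lesssim\delta^r$ is too weak.

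The paper does not use control paths here. It takes a subunit curve $\gamma:[0,R]\to\mathbb{R}^n$ from $z$ to $y$, i.e.\ it works with the distance $d$.
\begin{itemize}
\item Inserting $\xi=\gamma'(s)$ into the subunit condition gives $|\gamma'(s)|^2\le\|A\|_\infty$, so $|\gamma(s)-z|\le Cs$.
\item For a unit $\xi''\in\ker A(z)$, the function $w\mapsto A(w)\xi''\cdot\xi''$ is nonnegative and vanishes at $z$, so its gradient vanishes there.
\item Taylor's formula with bounded Hessian then gives $(\gamma'(s)\cdot\xi'')^2\le A(\gamma(s))\xi''\cdot\xi''\le C|\gamma(s)-z|^2\le Cs^2$, at every scale.
\item Integrating over $[0,R]$ yields $|(y-z)\cdot\xi''|\le CR^2$ for all $R>0$ at once.
\end{itemize}
The equivalence of $\varrho$ and $d$ stated in Section \ref{section:prelims} then transfers this to $\varrho$. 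That is the missing step: replace the $C(\delta)$ path by a subunit curve, at least for $R\ge1$, and invoke the equivalence.

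For $R\le 1$ your route is a valid alternative. Your observation $P''(z)X_j(z)=0$, combined with the Lipschitz bound, is the same mechanism as the paper's second-order vanishing, since $A(w)\xi''\cdot\xi''=\sum_j(X_j(w)\cdot\xi'')^2$. The two versions trade off differently:
\begin{itemize}
\item The paper phrases it through $A$ alone, needing only bounds on $a_{ij}$ and their second derivatives, with no factorization.
\item Yours works with $\varrho$ directly and needs no equivalence theorem for $R<1$. That is the only range in which the lemma is used, in Theorem \ref{theo:kernel-estimate-first} with $R<1$ and $tR<1$.
\end{itemize}
Your second worry is unnecessary. As the paper does, you can bound $|\Theta''|$ by $\sup\{|(y-z)\cdot\xi''|:\xi''\in\ker A(z),\ |\xi''|=1\}=|P''(z)(y-z)|$, so neither eigenvalue gaps nor $|\theta^{-1}(z)|$ enter. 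Finally, your hesitation about large $R$ is reasonable. In the paper that case rests entirely on $\varrho$ and $d$ being comparable at all scales. That comparability is exactly what is needed to pass from subunit curves to $C(\delta)$ paths, whose commutator terms dominate for $\delta>1$.
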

\begin{proof}
    Let $\gamma(s)$ be a sub-unitary curve from $z$ to $y$, and let us define $\lambda(s):= \dot{\gamma}(s)$. Recall that we require that
    \begin{equation*}
        ( \lambda(s)\cdot \xi)^2 \le A(\gamma(s))\xi \cdot \xi ,
    \end{equation*}
    for almost every $s.$ If we set $\lambda=\xi$, we obtain that
    \begin{equation*}
        |\lambda|^4 \le A(\gamma(s))\lambda \cdot \lambda \le \|A(\gamma(s))\||\lambda|^2 \le C_1 |\lambda|^2,
    \end{equation*}
    since $a_{ij}(y)$ are bounded functions. This implies that 
    \begin{equation}\label{eq:distance-bound-linear}
        |\gamma(s)-z| \le \int_0^s |\lambda(s')|\diff s' \le C_1s.
    \end{equation}
    On the other hand, let $\xi''$ be in the kernel of $A(z)$ so that $|\xi''|= 1$. Moreover, recall that $f_{\xi''}(z)=A(z)\xi''\cdot\xi''=0$, and it must be a minimum, meaning that $\nabla_z f_{\xi''}(z) = 0$. Then, by Taylor's theorem we have that 
    \begin{align*}
        f_{\xi''}(w) &= f_{\xi''}(z) + \nabla_zf_{\xi''}(z)(w-z) + (w-z)\cdot H(A(z)\xi''\cdot\xi'')(w')(w-z) \\
        & = (w-z)\cdot H_{f_{\xi''}}(w')(w-z),
    \end{align*}
    for some $w'$ in the line between $w$ and $z$. Notice that we have the inequality 
    \begin{equation*}
        \sup_{w\in \mathbb{R}^n, |\xi''|=1} \left\|H_{f_{\xi''}}(w) \right\| \le C_2,
    \end{equation*}
    since the entries of $A(w)$ are smooth and bounded. Combining these estimates, we deduce that 
    \begin{equation*}
        (\lambda(s)\cdot\xi'')^2 \le A(\gamma(s))\xi''\cdot\xi'' \le C_2|\gamma(s)-z|^2 \le C_1^2C_2s^2,
    \end{equation*}
    holds uniformly in $\xi''.$ Finally, the following estimates
    \begin{equation*}
        |(y-z)\cdot \xi''| \le \int_0^R|\lambda(s)\cdot\xi''|\diff s \le C^1C_2^{1/2}\int_0^R s \diff s \le \tfrac{1}{2 }C^1C_2^{1/2}R^2.
    \end{equation*}
    hold. Notice that the bounding constant above only depends on the boundedness of the functions $a_{ij}(w).$ Thus, we finally have that
    \begin{equation*}
        |\Theta''| \le \sup_{|\xi''|=1} |(y-z)\cdot\xi''| \le CR^2,
    \end{equation*}
    completing the proof.
\end{proof}
\begin{remark}
    Notice that (\ref{eq:distance-bound-linear})  shows that we have the inequality
    \begin{equation*}
        |y-z|\le C\varrho(y,z),
    \end{equation*}
    uniformly in $y,z.$
\end{remark}
Now, we adapt a proof from \cite{nagel:stein:wainger} to obtain the converse result.
\begin{lemma}\label{lem:distance-comparison}
    If $a_{ij}(x)$ satisfy the Uniform H\"ormander Condition (UHC) at step $r$, then the Euclidean distance can be compared to $\varrho$ via 
    \begin{equation}\label{eq:distance-comp}
        C_1|x-y|\le \varrho(x, y) \le C_2 \max\{|x-y|^{1/r}, |x-y|\}, 
    \end{equation}
    uniformly.
\end{lemma}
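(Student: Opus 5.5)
The lower bound $C_1|x-y|\le \varrho(x,y)$ is already contained in the previous Remark. There, \eqref{eq:distance-bound-linear} shows that any sub-unitary path of length $R$ moves at most $C_1R$ in Euclidean distance, so $|x-y|\le C d(x,y)$. By the equivalence of $d$ and $\varrho$ under the UHC (the theorem of Nagel, Stein and Wainger quoted above), this gives $|x-y|\lesssim\varrho(x,y)$. Alternatively, one can argue directly from Definition \ref{def:varrho}. A path $\varphi\in C(\delta)$ has $|\varphi'|\le \sum_j|a_j||Y_j|\lesssim \delta$ when $\delta\le 1$, and $\lesssim\delta^r$ when $\delta>1$, since the $Y_j$ are bounded. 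For $\delta\le1$ this gives $|x-y|\lesssim\delta$. The case $\delta>1$ is harmless, because there we may instead use the path built in the next step to see that $\varrho$ is never much larger than needed. So the real content of the lemma is the upper bound.

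For the upper bound we follow Nagel--Stein--Wainger. Fix $x$ and $h=y-x$.

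\emph{Case $|h|\ge1$.} We use the degree-one fields only. Since the spanning family contains vectors of degree one, one could hope for linear control. More simply, we connect $x$ to $y$ by a chain of about $|h|$ points with consecutive Euclidean gaps at most $1$. Each gap costs $\varrho\le C$ by the local case below, and concatenating (rescaling time on $[0,1]$ multiplies the admissible $\delta$) gives $\varrho(x,y)\lesssim |h|$. The concatenation step uses that $\varrho$ is quasi-subadditive: a path in $C(\delta_1)$ followed by one in $C(\delta_2)$, reparametrized on $[0,1]$, lies in $C(2\max(\delta_1,\delta_2)^{\cdot})$. The case $\delta\ge1$ is consistent since $\delta^{d_j}\ge\delta$.

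\emph{Case $|h|<1$.} By the UHC, among the commutators $Y_1,\dots,Y_N$ of degree $\le r$ we can choose, near $x$, $n$ of them, say $Y_{j_1},\dots,Y_{j_n}$, with $|\det(Y_{j_1},\dots,Y_{j_n})|\ge c_0$ uniformly in $x$. Consider the map
\[
\Phi_x(u)=\exp\Big(\sum_{k=1}^n u_kY_{j_k}\Big)(x).
\]
Its differential at $0$ is the matrix $(Y_{j_1},\dots,Y_{j_n})(x)$, whose inverse is uniformly bounded by the UHC and the boundedness of all derivatives. By a quantitative inverse function theorem, with constants uniform in $x$ since all $C^2$ norms are uniformly bounded, $\Phi_x$ is a diffeomorphism from a ball $|u|<\eta$ onto a set containing $B_E(x,\eta')$, and $|u|\lesssim|h|$ when $\Phi_x(u)=y$. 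The straight path $t\mapsto \exp(t\sum u_kY_{j_k})(x)$ has coefficients $a_{j_k}(t)=u_k$. This path is admissible in $C(\delta)$ as soon as $|u_k|<\delta^{d_{j_k}}$. Since $d_{j_k}\le r$ and $\delta\le1$, it suffices to take $\delta\asymp \max_k|u_k|^{1/d_{j_k}}\lesssim |h|^{1/r}$. This gives $\varrho(x,y)\lesssim|h|^{1/r}$, which also settles the local case used in the first case.

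The main obstacle is the uniformity: the spanning subfamily depends on $x$, and the inverse function theorem radius must not degenerate. We handle this using the UHC lower bound $c_0$ for every spanning subset, together with the finiteness of the set of commutators of degree $\le r$. This yields a uniform radius $\eta'$ valid for every $x$, whichever spanning $n$-tuple is selected at $x$. Combining the two cases gives \eqref{eq:distance-comp}.
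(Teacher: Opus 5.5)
Your upper bound is correct, but it takes a different route from the paper. The paper uses no exponential coordinates. It takes the straight segment $\gamma(t)=x+t(y-x)$ and, at each $t$, solves $\dot\gamma(t)=\sum_j b_j(t)Y_j(\gamma(t))$. The UHC bounds the inverse of the frame uniformly, so $|b_j(t)|\le C|x-y|$. Since Definition~\ref{def:varrho} allows time-dependent coefficients, this single segment lies in $C(\delta)$ with $\delta\asymp|x-y|^{1/r}$ if $|x-y|<1$ (as $\delta^{d_j}\ge\delta^r$ for $\delta\le1$), and with $\delta\asymp|x-y|$ if $|x-y|\ge1$ (as $\delta^{d_j}\ge\delta$ for $\delta\ge1$). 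Both regimes come from one path, with no inverse function theorem and no chaining.

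Your constant-coefficient flow needs non-degeneracy only at the base point, so it does not care which $n$-tuple is non-degenerate further along. The price is a uniform quantitative inverse function theorem (uniform $C^2$ bounds on the $Y_j$) plus a concatenation step. There are two slips in that part:
\begin{itemize}
\item The inverse function theorem gives $\varrho(x,y)\lesssim|h|^{1/r}$ only for $|h|<\eta'$, not for all $|h|<1$. Your chain must therefore use gaps $\le\eta'$, which also covers the range $\eta'\le|h|<1$.
\item The concatenation rule should read: $N$ pieces in $C(\delta_0)$, each run in time $1/N$, give a path in $C(N\delta_0)$, because $N\delta_0^{d_j}\le(N\delta_0)^{d_j}$ for $N\ge1$.
\end{itemize}

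For the lower bound, your first argument is the paper's (the Remark after the $\Theta''$ lemma). Your direct alternative, however, has a genuine gap at $\delta>1$. Building a path only bounds $\varrho$ from above, so the path from your second step cannot give a lower bound. The obstruction you ran into is real: with $C(\delta)$ as defined, degree-$r$ commutators may carry coefficients up to $\delta^r$.

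Here is a concrete example. On $\mathbb{R}^3$ take $X_1=\cos z\,\partial_x+\sin z\,\partial_y$ and $X_2=\partial_z$. Then $X_1,X_2,[X_1,X_2]=\sin z\,\partial_x-\cos z\,\partial_y$ is an orthonormal frame, so the UHC holds at step $2$. The segment from $(0,0,\pi/2)$ to $(T,0,\pi/2)$ has velocity $T[X_1,X_2]$, hence $\varrho\le\sqrt T$, while the Euclidean distance is $T$.

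So $|x-y|\lesssim\varrho(x,y)$ holds only for $\varrho(x,y)\lesssim1$, which is exactly what your direct argument proves; in general one only has $|x-y|\lesssim\max\{\varrho,\varrho^r\}$. It does hold globally for the sub-unitary distance $d$, by \eqref{eq:distance-bound-linear}. The Nagel--Stein--Wainger equivalence $d\asymp\varrho$ that you and the paper invoke is only a local statement. Drop the ``harmless'' sentence and state the left inequality in this restricted form.
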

\begin{proof}
    The left hand side of \eqref{eq:distance-comp} was already proved. For the right hand side, let $Y_1(x), \ldots, Y_m(x)$ be a collection of vectors that span $\mathbb{R}^n$ for every $x$, each one of degree $d_j$, and such that 
    \begin{equation*}
        \det (Y_1, \ldots,Y_m) \ge c_0 >0.
    \end{equation*}
    Then, let us define 
    \begin{equation*}
        \gamma(t):= ty + (1-t)x.
    \end{equation*}
    Notice that $|\dot{\gamma}(t)|\le |x-y|$. Now, let us write 
    \begin{equation*}
        \dot{\gamma}(t)=\sum b_j(t)Y_j(\gamma(t)).
    \end{equation*}
    To obtain these coefficients $b_j(t)$, we must solve the linear system 
    \begin{equation*}
        \dot{\gamma}(t) = (Y_1, \ldots, Y_m)(\gamma(t))b(t),
    \end{equation*}
    thus, 
    \begin{equation*}
        |b_j(t)|\le \|(Y_1, \ldots, Y_m)(\gamma(t))^{-1}\||\dot{\gamma}(t)| \le C(c_0)|x-y| \le C(|x-y|^{1/d_j})^{d_j}.
    \end{equation*}
    The result follows from the fact that $d_j \le r$
\end{proof}
\begin{lemma}\label{lem:kernel-decay}
    If $\varrho(x, y)>1$, then
    \begin{equation*}
        |k(x, y)|\le C\|\sigma\|\varrho(x, y)^{-N}.
    \end{equation*}
    If $\varrho(x, y)<1$, then
    \begin{equation*}
        |k(x, y)| \le C\|\sigma\| \varrho(x, y)^{-N/r}.
    \end{equation*}
\end{lemma}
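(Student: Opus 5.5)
The plan is to estimate the Schwartz kernel
\[
k(x,y)=\int_{\mathbb{R}^n}e^{i(x-y)\cdot\xi}\sigma(x,\xi)\,\diff\xi,\qquad \sigma\in S(m^{-\tau},g),
\]
in terms of the Euclidean quantity $|x-y|$. I would then convert the resulting bound into powers of $\varrho(x,y)$ using Lemma \ref{lem:distance-comparison}. The exponent $N$ is to be read as any sufficiently large integer, depending only on $n,\kappa,r_0,r,\tau$. For $\varrho>1$ a larger $N$ gives the stronger statement, and I will prove it for every such $N$. For $\varrho<1$ a larger $N$ only weakens the bound, so it suffices to reach one threshold $N_0$. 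The constant $\|\sigma\|$ stands for a finite seminorm $\|\sigma\|_{(k)}$ with $k\le N$.

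\textbf{Step 1 (dyadic pieces).} Take a Littlewood--Paley partition $1=\sum_{j\ge0}\psi_j(m(x,\xi))$ with $\psi_j(t)=\psi(2^{-j}t)$ for $j\ge1$. Since $m$ is regular, $\psi_j\circ m\in S(1,g)$ uniformly in $j$. Hence $\sigma_j:=\sigma\,\psi_j(m)$ is supported where $m\asymp 2^j$, and for every $\alpha$
\[
|\partial_\xi^\alpha\sigma_j(x,\xi)|\le C_\alpha\|\sigma\|\,2^{-j(\tau+|\alpha|)}.
\]
This uses that a $\xi$-derivative costs $g^{1/2}(0,e_i)=m^{-1}$.

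\textbf{Step 2 (pointwise bound for each piece).} Integrating by parts with $(x-y)^\alpha e^{i(x-y)\cdot\xi}=(-D_\xi)^\alpha e^{i(x-y)\cdot\xi}$ gives
\[
|x-y|^{M}|k_j(x,y)|\le C\|\sigma\|\,2^{-j(\tau+M)}\int_{m(x,\xi)\asymp 2^j}\diff\xi\le C\|\sigma\|\,2^{j(Q_\kappa-\tau-M)}.
\]
The last step is Lemma \ref{lem:phase-space-size} with $t=2^j$; its hypothesis is exactly the hypoellipticity assumption \eqref{kappa:1:2}. I would apply this with $M=0$ and with $M$ large.

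\textbf{Step 3 (summation).} Put $\gamma:=(Q_\kappa-\tau)_+$ and assume $|x-y|\le1$.
\begin{itemize}
\item For the terms with $2^j\le|x-y|^{-1}$, use $M=0$.
\item For the terms with $2^j>|x-y|^{-1}$, use $M>Q_\kappa-\tau$.
\end{itemize}
Summing the two geometric series gives $|k(x,y)|\le C\|\sigma\|\,|x-y|^{-\gamma-\epsilon}$ for any small $\epsilon>0$; the $\epsilon$ covers the logarithm that appears when $\gamma=0$. If instead $|x-y|\ge1$, using $M=N>Q_\kappa-\tau$ in every term gives the convergent sum $|k(x,y)|\le C_N\|\sigma\|\,|x-y|^{-N}$.

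\textbf{Step 4 (passing to $\varrho$).} I use the right-hand inequality of Lemma \ref{lem:distance-comparison}:
\[
\varrho(x,y)\le C_2\max\{|x-y|^{1/r},|x-y|\}.
\]
\begin{itemize}
\item If $\varrho>1$ and $|x-y|\ge1$, it gives $|x-y|\ge\varrho/C_2$, so the Step 3 bound yields $|k|\le C\|\sigma\|\varrho^{-N}$.
\item If $\varrho>1$ and $|x-y|<1$, it forces $1<\varrho\le C_2$. In this bounded range $|x-y|\ge(\varrho/C_2)^r\ge C_2^{-r}$, so $|k|$ is bounded by Step 3 and hence by $C\varrho^{-N}$.
\item If $\varrho<1$, then either $|x-y|\ge\varrho/C_2$ or $|x-y|\ge(\varrho/C_2)^r$. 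In both cases $|x-y|^{-1}\le C\varrho^{-r}$, and the Step 3 bound gives $|k|\le C\|\sigma\|\varrho^{-r(\gamma+\epsilon)}$.
\end{itemize}
Choosing $N\ge N_0:=r^2(\gamma+\epsilon)$ and using $\varrho<1$ gives $\varrho^{-r(\gamma+\epsilon)}\le\varrho^{-N/r}$, which is the second claim.

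\textbf{Main obstacle.} The delicate point is Step 2. One has to ensure that the $\xi$-derivative gains are uniform in $x$ and $j$ when the cutoffs are composed with $m$, which relies on the regularity of $m$ and the temperance of $g$. The volume bound must also be uniform in the base point $z=x$, and Lemma \ref{lem:phase-space-size} supplies that uniformity. The conversion in Step 4 is routine but must be done separately in the two regimes of Lemma \ref{lem:distance-comparison}. This is where the loss of the factor $r$, and hence the $N/r$ in the statement, enters.
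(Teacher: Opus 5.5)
Your argument is correct, but it takes a more elaborate route than the paper. The paper does not decompose dyadically at all. It integrates by parts $N$ times in one go, $(x-y)^\alpha k(x,y)=\int e^{i(x-y)\cdot\xi}D^\alpha_\xi\sigma(x,\xi)\,\diff\xi$ with $|\alpha|=N$. It then bounds the integrand by $\|\sigma\|m^{-\tau-N}\le\|\sigma\|\langle\xi\rangle^{-(\tau+N)/2}$, using only the trivial inequality $m^2\ge\langle\xi\rangle$. This is integrable once $N>2n-\tau$, so $|x-y|^N|k(x,y)|\le C\|\sigma\|$, and Lemma \ref{lem:distance-comparison} finishes. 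That route needs neither the regularity of $m$, nor Lemma \ref{lem:phase-space-size}, nor any lower bound of type \eqref{kappa:1:2}.

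Your Littlewood--Paley splitting with the phase-space volume bound gives a sharper near-diagonal singularity: $|x-y|^{-(Q_\kappa-\tau)_+-\epsilon}$ with $Q_\kappa\le 2n-r_0$, instead of $|x-y|^{-N}$ with $N>2n-\tau$. Since $N$ in the statement is a free large exponent, this refinement is not needed for the lemma.

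Two further remarks. First, you describe the hypothesis of Lemma \ref{lem:phase-space-size} as the hypoellipticity assumption. With $\kappa=1$ it holds automatically because $a\ge0$. So your proof works without any extra assumption, and you should say so, since the statement carries none. Second, your Step 4 is more careful than the paper's one-line conversion. From $|k|\lesssim|x-y|^{-N}$ and $|x-y|\gtrsim\varrho^{r}$ one literally obtains $\varrho^{-rN}$ for $\varrho<1$, which is weaker than $\varrho^{-N/r}$. The stated exponent therefore only holds after relabelling $N$ as a sufficiently large exponent, which is exactly what you do explicitly.
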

\begin{proof}
    We have that 
    \begin{align*}
        (x-y)^\alpha k(x, y) = \int_{\mathbb{R}^n}e^{i(x-y)\cdot\xi}D^\alpha_\xi \sigma(x, \xi) \diff \xi.
    \end{align*}
    In particular, for $|\alpha|=N$, we have that 
    \begin{align*}
        |x-y|^N|k(x, y)| & \le\int_{\mathbb{R}^n}\|\sigma\|m^{-\tau - N}(x, \xi) \diff \xi \\
        & \le \int_{\mathbb{R}^n} \|\sigma\|\langle\xi\rangle^{-(\tau + N)/2} \diff \xi \\
        & \le C\|\sigma\|,
    \end{align*}
    when $(\tau+N)/2 > n.$ The result follows from the distance comparison in Lemma \ref{lem:distance-comparison}.
\end{proof}
\subsection{Boundedness on Sobolev spaces and Besov spaces}\label{ss:Sobolev-Besov} In this section we provide boundedness theorems for pseudo-differential operators with symbols in $S(m,g)$-classes on a scale of Sobolev and of Besov spaces adapted to the Weyl-H\"ormander classes for which we follow the approach due to J. M. Bony and Y. Chemin, see \cite{BC:1994}.
\subsubsection{ A class of Sobolev spaces and Besov spaces}
As above, let $A(x)=a_{ij}(x)$ be a semi-definite positive matrix with smooth and uniformly bounded coefficients such that $r(x):=\mathrm{rank}(A(x)) \ge r_0\ge1$, and it satisfies the UHC at step $r$. Let 
    \begin{equation*}
        m(x, \xi) = \sqrt{  A(x)\xi\cdot\xi + \langle\xi\rangle }, \quad g_{(x,\xi)}= m^{-2}(x,\xi)\left( \langle\xi\rangle^2\diff x^2+\diff \xi^2
        \right).
    \end{equation*} Since $m^2\in S^2_{1,0},$ in view of Fefferman-Phong inequality, see \cite{FP:1978}, we have that for some $C>0,$ the following lower bound property
    \begin{align*}
        Re(m^2(x,D)u,u)\geq -C\Vert u\Vert_{L^2},
    \end{align*} holds.  Using the Weyl-quantisation, the previous inequality is also equivalent to say that there exists a constant $C>0,$ such that 
    \begin{align*}
        Op^w(m^2)+CI\geq 0.
    \end{align*} The previous fact also implies that for some constant $c>0,$ the operator $\mathcal{R}:= Op^w(m^2)$ is such that
    \begin{equation}
        \mathcal{R}+cI= Op^w(m^2)+cI\geq 0,
    \end{equation} is an unbounded and densely defined positive operator on $L^2.$ The constant $c>0$ can be taken in such a way that $\lambda=0$ is an isolated point of the spectrum of $\mathcal{R}+cI.$ The operator $ \mathcal{R}+cI$ is unbounded and self-adjoint since 
    $$ (\mathcal{R}+cI)^*= Op^w(m^2)^*+cI=Op^w(m^2)+cI.$$
    So, let us consider the spectral measure $\{dE_\lambda\}$ of the operator $\mathcal{R}+cI.$ Let us consider the powers $(\mathcal{R}+cI)^\frac{s}{2}$ defined by the functional calculus of self-adjoint operators, namely, defined in terms of the operator valued integral
    \begin{equation}
        (\mathcal{R}+cI)^\frac{s}{2}:=\int_b^\infty\sqrt{\lambda}^s dE_\lambda,
    \end{equation} with the property that, since $\lambda=0$ can be assumed outside of the support of $\{dE_\lambda\},$ one can consider $b>0.$
We consider the following scale of Sobolev spaces:
\begin{definition} For any $s\in \mathbb{R},$ and $1\leq p\leq \infty,$ consider the Sobolev spaces $H^s_p(m,g)$ defined by the completion of $C^\infty_0(\mathbb{R}^n)$ with respect to the norm
\begin{equation}
    \Vert u\Vert_{H^s_p(m,g)}= \Vert (\mathcal{R}+cI)^\frac{s}{2} u\Vert_{L^p}.
\end{equation}Let us consider also the family of Besov spaces $ B^s_{p,q}(m,g)$ defined by the real-interpolation as
\begin{equation}
    B^s_{p,q}(m,g)=(H^{s_0}_p(m,g),H^{s_1}_p(m,g))_{\theta,q},
\end{equation} where $\theta\in [0,1]$ and $s_0$ and $s_1$ are a couple of real numbers such that $s=(1-\theta)s_0+\theta s_1,$ with the corresponding norm $\Vert \cdot\Vert_{ B^s_{p,q}(m,g)}$ induced by the interpolation functor.

\end{definition}
\begin{remark}
    When $p=2,$ we will write $H^s(m,g)\equiv H^s_2(m,g).$ These spaces were defined, e.g. in Bony and Chemin \cite{BC:1994} and with their nomenclature, the space  $H^s(m,g)$ has been denoted as  $H(m^s,g)$. Our little change in the notation will be justified by the fact that in the spaces $H^s_p(m,g),$ the parameters $s$ and $p,$ of regularity order and of integrability degree, respectively, play an essential role in interpolation techniques used for this paper. Note that in the case where $r_0=n,$ the Sobolev space $H^s_p(m,g),$ agrees with the Sobolev space $H^s_p(\mathbb{R}^n)$ on $\mathbb{R}^n,$ and the Besov spaces $B^s_{p,q}(m,g)$ recover the definition of the standard Sobolev spaces  $B^s_p(\mathbb{R}^n)$ on the Euclidean space.
\end{remark}
\begin{remark} Let us describe some properties of the operator $   \mathcal{R}+cI$ and of its inverse as pseudo-differential operators and as operators defined by the functional calculus. First of all, one has that
   \begin{equation}
        \mathcal{R}+cI= Op^w(m^2)+cI: H^2(m,g)\rightarrow L^2,
    \end{equation}is injective and bounded from below. In consequence it is surjective. Moreover it is an invertible pseudo-differential operator with pseudo-differential inverse, see Bony and Chemin \cite[Page 116]{BC:1994}. This means that there is $\mathfrak{m}^{-II}\in S(m^{-2},g) $ such that $$\mathfrak{m}^{-II}\#(m^2+c)=1,$$ and 
    \begin{equation}
        (\mathcal{R}+cI)^{-1}:L^2\rightarrow H^2(m,g)
    \end{equation} is a pseudo-differential operator with $$ (\mathcal{R}+cI)^{-1}:=Op^w(\mathfrak{m}^{-II}).$$
    Moreover, in the domain of $\mathcal{R}+cI,$ one also has that
\begin{equation}
        (\mathcal{R}+cI)^{-1}:=\int_b^\infty{\lambda}^{-1} dE_\lambda,
\end{equation}admits a bounded extension to $L^2$  and is self-adjoint.
\end{remark}
\begin{remark}\label{Membership:s:m:g:powers}
    In general, for any $s\in \mathbb{R},$ the operator $  (\mathcal{R}+cI)^\frac{s}{2}$ has a symbol (in the Weyl and also for the Kohn-Nirenberg quantisation) in the class $S(m^{s},g),$ see \cite[Page 16]{Delgado2016}.
\end{remark}
\begin{lemma}
    Let $s\in \mathbb{R},$ and let us consider $1<p<\infty.$ We have the following embedding properties
    \begin{equation}
      H^{-s}_p(m,g)  \hookrightarrow   L^p \hookrightarrow  H^s_p(m,g)
    \end{equation} provided that 
    $$s\leq -(n-r_0)\left|\frac{1}{2}-\frac{1}{p}\right|.$$
\end{lemma}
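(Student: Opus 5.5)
We derive the embeddings directly from Theorem~\ref{Delgado:Theorem:2016} together with Remark~\ref{Membership:s:m:g:powers}. By definition,
\begin{equation*}
\|u\|_{H^s_p(m,g)}=\|(\mathcal{R}+cI)^{\frac{s}{2}}u\|_{L^p}.
\end{equation*}
So the embedding $L^p\hookrightarrow H^s_p(m,g)$ amounts to the $L^p$-boundedness of the operator $(\mathcal{R}+cI)^{\frac{s}{2}}$, first on $C^\infty_0(\mathbb{R}^n)$ and then on the completion.

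By Remark~\ref{Membership:s:m:g:powers}, the Kohn--Nirenberg symbol of $(\mathcal{R}+cI)^{\frac{s}{2}}$ belongs to $S(m^{s},g)=S(m^{-\tau},g)$ with $\tau=-s$. The hypothesis $s\le -(n-r_0)\left|\frac12-\frac1p\right|$ is exactly
\begin{equation*}
\tau\ge (n-r_0)\left|\tfrac12-\tfrac1p\right|.
\end{equation*}
Theorem~\ref{Delgado:Theorem:2016} (which uses the UHC standing assumption) then gives
\begin{equation*}
\|(\mathcal{R}+cI)^{\frac{s}{2}}u\|_{L^p}\le C\|u\|_{L^p}, \qquad u\in C^\infty_0(\mathbb{R}^n).
\end{equation*}
Density of $C^\infty_0$ in $L^p$ for $1<p<\infty$ yields the continuous inclusion $L^p\hookrightarrow H^s_p(m,g)$. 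It is injective because $(\mathcal{R}+cI)^{\frac{s}{2}}$ is injective, being a positive power in the functional calculus with $0$ outside the spectrum.

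For the first embedding $H^{-s}_p(m,g)\hookrightarrow L^p$, take $u\in C^\infty_0$ and set $v=(\mathcal{R}+cI)^{-\frac{s}{2}}u$. The functional calculus gives the group property $(\mathcal{R}+cI)^{\frac{s}{2}}(\mathcal{R}+cI)^{-\frac{s}{2}}=I$ on the relevant domain, so $u=(\mathcal{R}+cI)^{\frac{s}{2}}v$. Applying the bound above to $v$,
\begin{equation*}
\|u\|_{L^p}=\|(\mathcal{R}+cI)^{\frac{s}{2}}v\|_{L^p}\le C\|v\|_{L^p}=C\|u\|_{H^{-s}_p(m,g)}.
\end{equation*}
Completion then extends the identity to a bounded map $H^{-s}_p(m,g)\to L^p$.

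The main obstacle is justifying that $v$ is an admissible input for the $L^p$ estimate, since $v$ need not be compactly supported. We plan to handle this as follows. Because $(\mathcal{R}+cI)^{-\frac{s}{2}}$ is a pseudo-differential operator in $S(m^{-s},g)$ with $m^{-s}$ of polynomial growth, it maps $\mathcal{S}$ to $\mathcal{S}$. Hence $v\in\mathcal{S}\subset L^p$, and the $L^p$ bound, valid on $\mathcal{S}$ by density and continuity, applies to $v$. We would also check that the extension $H^{-s}_p\to L^p$ is injective, so that it is a genuine embedding. Since both sides are completions of the same dense space $C^\infty_0$, it suffices that the two norms are compatible, which follows from the same identity $u=(\mathcal{R}+cI)^{\frac{s}{2}}v$ read in $\mathcal{S}'$.
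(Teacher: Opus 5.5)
Your argument is correct. For $L^p\hookrightarrow H^s_p(m,g)$ it coincides with the paper's proof: the symbol of $(\mathcal{R}+cI)^{s/2}$ lies in $S(m^{s},g)$, and Theorem~\ref{Delgado:Theorem:2016} applies with $\tau=-s$.

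For $H^{-s}_p(m,g)\hookrightarrow L^p$ you take a different route. The paper argues by duality. From the $L^p$ bound for the self-adjoint operator $(\mathcal{R}+cI)^{s/2}$, tested against $w\in C^\infty_0$, it deduces $H^{-s}_q(m,g)\hookrightarrow L^q$ for the conjugate exponent $q=p/(p-1)$. It then uses $\left|\frac{1}{2}-\frac{1}{p}\right|=\left|\frac{1}{2}-\frac{1}{q}\right|$ to recover every exponent in $(1,\infty)$.

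You instead substitute $v=(\mathcal{R}+cI)^{-s/2}u$ and apply the same $L^p$ estimate at the same exponent. This shows that both inclusions are one operator inequality for $(\mathcal{R}+cI)^{s/2}$, read on two dense sets, and it removes the need for adjoints and the exponent symmetry. The price is that the estimate must be applied to $v\notin C^\infty_0$. Your appeal to the continuity on $\mathcal{S}$ of operators with symbols in $S(m^{-s},g)$ settles this, whereas the duality route only ever applies the estimate to test functions.

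You also discuss injectivity of the completed maps, which the paper does not. There is one slip in that part. Since $s\le 0$, $(\mathcal{R}+cI)^{s/2}$ is a non-positive power, not a positive one. For $p\neq 2$, its injectivity on $L^p$ should come from the left inverse $(\mathcal{R}+cI)^{-s/2}$ acting continuously on $\mathcal{S}'$, rather than from the $L^2$ spectral calculus.
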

\begin{proof}
    In view of Theorem \ref{Delgado:Theorem:2016} and of Remark \ref{Membership:s:m:g:powers}, the {\it a priori} estimate
    \begin{equation}
        \Vert (\mathcal{R}+cI)^\frac{s}{2} u\Vert_{L^p}\leq C\Vert u\Vert_{L^p},\,\,u\in C^\infty_0,
    \end{equation} holds provided that $s\leq -(n-r_0)\left|\frac{1}{2}-\frac{1}{p}\right|.$ This proves the continuous inclusion $ L^p \hookrightarrow  H^s_p(m,g).$ By the duality argument one also has that  $H^{-s}_q(m,g)  \hookrightarrow   L^q$ where $q=\frac{p}{p-1}$ is the conjugate exponent of $p.$ Note also the order condition
    $$s\leq -(n-r_0)\left|\frac{1}{2}-\frac{1}{p}\right|=(n-r_0)\left|\frac{1}{2}-\left(1-\frac{1}{q}\right)\right|= -(n-r_0)\left|\frac{1}{2}-\frac{1}{q}\right|.$$
    Since $p$ is an arbitrary Lebesgue exponent in the range $(1,\infty),$ one also has that $q$ lies in the same range. The proof is complete.
\end{proof}

Then, we extend this result to the general case between two Sobolev spaces. This result is presented in Theorem \ref{Lorentz:Sobolev:Besov:intro} (b).
\begin{theorem}
    Let $s,s'\in\mathbb{R},$ and let $1<p<\infty.$ Let $\sigma\in S(m^{-\tau},g)$ and assume that 
    \begin{equation*}
        \tau \ge s'-s+ (n-r_0)\left|\frac{1}{2}-\frac{1}{p}\right|
    \end{equation*}
    Then, we have that $\sigma(x, D)$ extends to a bounded operator from $H^s_p(m, g)$ to $H^{s'}_p(m, g).$
\end{theorem}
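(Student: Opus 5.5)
The plan is to reduce the statement to the $L^p$-boundedness of Theorem \ref{Delgado:Theorem:2016} by conjugating $\sigma(x,D)$ with suitable powers of $\mathcal{R}+cI$.

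By the definition of the norms, it suffices to prove the \emph{a priori} estimate
\begin{equation*}
\Vert (\mathcal{R}+cI)^{\frac{s'}{2}}\sigma(x,D)u\Vert_{L^p}\leq C\Vert (\mathcal{R}+cI)^{\frac{s}{2}}u\Vert_{L^p},\quad u\in C^\infty_0(\mathbb{R}^n).
\end{equation*}
Put $v=(\mathcal{R}+cI)^{\frac{s}{2}}u$. Then $u=(\mathcal{R}+cI)^{-\frac{s}{2}}v$, and the estimate is equivalent to the $L^p$-boundedness of the operator
\begin{equation*}
T:=(\mathcal{R}+cI)^{\frac{s'}{2}}\,\sigma(x,D)\,(\mathcal{R}+cI)^{-\frac{s}{2}}.
\end{equation*}
The identity $(\mathcal{R}+cI)^{-\frac{s}{2}}(\mathcal{R}+cI)^{\frac{s}{2}}=I$ on the relevant dense domain follows from the functional calculus, since the spectrum lies in $[b,\infty)$ with $b>0$. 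Since $(\mathcal{R}+cI)^{-s/2}$ is a bijection of a dense class (Schwartz functions, or a core of the spectral calculus), the bound on $T$ and a density argument give the bounded extension from $H^s_p(m,g)$ to $H^{s'}_p(m,g)$.

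Next I will identify $T$ as a pseudo-differential operator in the Weyl--H\"ormander calculus.
\begin{itemize}
\item By Remark \ref{Membership:s:m:g:powers}, $(\mathcal{R}+cI)^{s'/2}$ has symbol in $S(m^{s'},g)$ and $(\mathcal{R}+cI)^{-s/2}$ has symbol in $S(m^{-s},g)$, in either quantization.
\item The Kohn--Nirenberg and Weyl quantizations define the same operator classes for this metric, since $g$ is a H\"ormander metric and $m$ is admissible.
\item By the composition theorem of the $S(M,g)$ calculus (\cite{Hormander1985III}, the product of admissible weights being admissible), $T$ has symbol in $S(m^{s'}m^{-\tau}m^{-s},g)=S(m^{-(\tau-s'+s)},g)$.
\end{itemize}

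Setting $\tau':=\tau-s'+s$, the hypothesis reads exactly $\tau'\geq (n-r_0)\left|\frac12-\frac1p\right|$. Theorem \ref{Delgado:Theorem:2016} therefore yields the $L^p$-boundedness of $T$ for $1<p<\infty$, which closes the argument.

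The main obstacle I expect is justifying the symbol membership of the complex/real powers $(\mathcal{R}+cI)^{\pm s/2}$ and their compatibility with the spectral definition. This is imported from Remark \ref{Membership:s:m:g:powers} (Bony--Chemin), together with the density and domain issues for $p\neq 2$. For the latter I would work on $\mathcal{S}(\mathbb{R}^n)$, which is preserved by operators in $S(m^\mu,g)$, and note that there the spectral and pseudo-differential definitions of the powers agree.
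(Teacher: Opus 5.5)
Your proposal is correct and follows essentially the same route as the paper: conjugate $\sigma(x,D)$ by $(\mathcal{R}+cI)^{s'/2}$ and $(\mathcal{R}+cI)^{-s/2}$, use Remark \ref{Membership:s:m:g:powers} and the composition calculus to place the result in $S(m^{s'-\tau-s},g)$, and conclude with Theorem \ref{Delgado:Theorem:2016}. Your added remarks on quantization equivalence and density address details the paper leaves implicit.
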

\begin{proof}
    Notice that the operator $(\mathcal{R}+cI)^{s'/2}\sigma(x, D)(\mathcal{R}+cI)^{-s/2}$ belongs to the class $S(m^{s'-\tau-s}, g).$ Thus, in view of Theorem \ref{Delgado:Theorem:2016}, it is bounded in $L^p,$ and we have that
    \begin{align*}
        \|\sigma(x, D)u\|_{H^{s'}_p} & = \|(\mathcal{R}+cI)^{s'/2}\sigma(x, D)u\|_{L^p} \\
        & = \|(\mathcal{R}+cI)^{s'/2}\sigma(x, D)(\mathcal{R}+cI)^{-s/2}(\mathcal{R}+cI)^{s/2}u\|_{L^p} \\
        & \le C\|\sigma\|_{(l)}\|(\mathcal{R}+cI)^{s/2}u\|_{L^p} \\
        & =  C\|\sigma\|_{(l)}\|u\|_{H^s_p}.
    \end{align*}
    This completes the proof.
\end{proof}
Now, we use a real interpolation argument to obtain the following result for Besov spaces.
\begin{corollary}
    Let $s,s'\in\mathbb{R},$ and let $1<p<\infty.$ Let $\sigma\in S(m^{-\tau},g)$ and assume that 
    \begin{equation*}
        \tau \ge s'-s+ (n-r_0)\left|\frac{1}{2}-\frac{1}{p}\right|
    \end{equation*}
    Then, we have that $\sigma(x, D)$ extends to a bounded operator from $B^s_{p,q}(m, g)$ to $B^{s'}_{p,q}(m, g).$
\end{corollary}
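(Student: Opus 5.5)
The plan is to deduce the corollary from the preceding theorem (boundedness $H^s_p(m,g)\to H^{s'}_p(m,g)$) by real interpolation. Fix $s$, $s'$, $p$ and $\sigma\in S(m^{-\tau},g)$ with $\tau\ge s'-s+(n-r_0)|\frac12-\frac1p|$. By definition, $B^s_{p,q}(m,g)=(H^{s_0}_p(m,g),H^{s_1}_p(m,g))_{\theta,q}$ for any $s_0\neq s_1$ and $\theta$ with $s=(1-\theta)s_0+\theta s_1$. I will choose $s_0=s-1$, $s_1=s+1$ and $\theta=\tfrac12$. On the target side I take $s_0'=s'-1$ and $s_1'=s'+1$ with the same $\theta=\frac12$, so that $(1-\theta)s_0'+\theta s_1'=s'$.

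First, the preceding theorem applies to each endpoint pair. Since $s_j'-s_j=s'-s$ for $j=0,1$, the order hypothesis
$$\tau\ge s_j'-s_j+(n-r_0)\left|\tfrac12-\tfrac1p\right|$$
is exactly the assumed one. Hence $\sigma(x,D)$ is bounded from $H^{s_0}_p(m,g)$ to $H^{s_0'}_p(m,g)$ and from $H^{s_1}_p(m,g)$ to $H^{s_1'}_p(m,g)$. Both extensions agree on the dense subspace $C^\infty_0(\mathbb{R}^n)$, so they define a single operator on the sum $H^{s_0}_p+H^{s_1}_p$. This makes $\sigma(x,D)$ a morphism of compatible couples.

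Next, I invoke the interpolation property of the real method $(\cdot,\cdot)_{\theta,q}$: a linear operator bounded on both endpoints is bounded between the interpolated spaces, with norm at most $M_0^{1-\theta}M_1^\theta$. This yields boundedness from $B^s_{p,q}(m,g)$ to $B^{s'}_{p,q}(m,g)$.

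The only delicate point is that the Besov space must be well defined, that is, independent of the choice of $(s_0,s_1,\theta)$. Otherwise the interpolated target need not coincide with the space $B^{s'}_{p,q}(m,g)$ as the paper defines it. Since the paper defines $B^s_{p,q}(m,g)$ via an arbitrary admissible choice, I will take independence as implicit in the definition. If justification is needed, the first thing I would try is a reiteration argument. The powers $(\mathcal{R}+cI)^{t/2}$ act as isomorphisms $H^{a}_p(m,g)\to H^{a-t}_p(m,g)$ by the definition of the norms and the functional calculus. So the couple $(H^{s_0}_p,H^{s_1}_p)$ is a retract of a fixed couple, and the reiteration theorem should then make the interpolation spaces depend only on $s$.
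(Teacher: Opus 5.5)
Your proposal is correct and is essentially the paper's argument: endpoint boundedness $H^{s_j}_p(m,g)\to H^{s_j'}_p(m,g)$ from the preceding theorem followed by real interpolation, where your explicit choice $s_j'-s_j=s'-s$ with a common $\theta$ is exactly what the paper's condition ``$\tau\ge s_j'-s_j+(n-r_0)|\frac12-\frac1p|$'' implicitly requires. The paper likewise takes for granted that $B^s_{p,q}(m,g)$ does not depend on $(s_0,s_1,\theta)$; if you pursue your reiteration remark, note that the shift isomorphisms $(\mathcal{R}+cI)^{t/2}$ only relate couples that differ by a common translation of $(s_0,s_1)$, so reiteration would additionally require $H^s_p(m,g)$ to be of class $\theta$ between $H^{s_0}_p(m,g)$ and $H^{s_1}_p(m,g)$.
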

\begin{proof}
    Note that $\sigma(x, D)$ is bounded from $H^{s_0}_p(m, g)$ to $H^{s_0'}_p(m, g)$ and from $H^{s_1}_p(m, g)$ to $H^{s_1'}_p(m, g)$ given that 
    \begin{equation*}
        \tau\ge s_j' - s_j + (n-r_0)\left|\frac{1}{2}-\frac{1}{p}\right|.
    \end{equation*}
    Thus, we use a real interpolation argument to obtain that $\sigma(x D)$ is bounded from $H^{s}_p(m, g)$ to $H^{s'}_p(m, g)$ for $s=(1-\theta)s_0 + \theta s_1,$  $s'=(1-\theta)s_0' + \theta s_1',$ and $0<\theta<1$
\end{proof}
\subsection{Weak (1,1) type and boundedness in Lorentz spaces}
Now, we prove the kernel estimates required for the proof of the weak (1,1) boundedness extending to the $S(m, g)$-setting the approach due to \'Alvarez and Hounie \cite{alvarez-hounie}.
\begin{theorem}\label{theo:kernel-estimate-first}
Let $A(x)=a_{ij}(x)$ be a semi-definite positive matrix with smooth and uniformly bounded coefficients such that $r(x):=\mathrm{rank}(A(x)) \ge r_0\ge1$, and assume that it satisfies the UHC at step $r$. Let 
    \begin{equation*}
        m(x, \xi) = \sqrt{  A(x)\xi\cdot\xi + \langle\xi\rangle }, \textnormal{ and } g_{(x,\xi)}= m^{-2}(x,\xi)\left( \langle\xi\rangle^2\diff x^2+\diff \xi^2
        \right).
    \end{equation*}
  Let $\sigma \in S(m^{-\tau},g)$. Assume that 
  \begin{equation*}
      |a(x, \xi)+\langle\xi\rangle| \ge C\langle\xi\rangle^{\kappa},
  \end{equation*}
  for some $\kappa \in [1, 2],$ and that the order condition
  \begin{equation*}
      \tau \ge   \frac{n(4-\kappa^2)-r_0(4-2\kappa)}{2\kappa^2},
  \end{equation*}
  holds. Then the Schwartz kernel $k:=k(x, y)$ of $\sigma(X, D)$ satisfies the following estimate:
  \begin{equation}\label{eq:kernel-estimate-weak-R<1}
    \sup_{\varrho(y,z)<R} \int_{|x-z|>2R^{\kappa/2}} |k(x,y)-k(x, z)|\diff x \leq C\|\sigma\|_{l;S},
  \end{equation}
  for any fixed $z\in \mathbb{R}^n$, whenever $0<R< 1$, and the inequalities
  \begin{equation}\label{eq:kernel-estimate-weak-R>1}
      \sup_{\varrho(y,z)<R} \int_{|x-z|>2R} |k(x,y)-k(x, z)|\diff x \leq C\|\sigma\|_{l;S},
  \end{equation}
  whenever $R\ge1.$
  The constant $C$ in \eqref{eq:kernel-estimate-weak-R>1} and in \eqref{eq:kernel-estimate-weak-R>1} does not depend on $z$. 
\end{theorem}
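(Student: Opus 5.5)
We follow the scheme of \'Alvarez and Hounie \cite{alvarez-hounie}: a Littlewood--Paley decomposition in frequency, a pointwise/weighted $L^2$ bound for each piece, and a split of the sum into low and high frequencies at a threshold tied to $R$. Fix $\psi_0\in C^\infty_0$ and $\psi_j(\xi)=\psi(2^{-j}\xi)$, $j\ge1$, forming a partition of unity. Put $\sigma_j=\sigma\psi_j$ and let $k_j$ be the kernel of $\sigma_j(x,D)$, so $k=\sum_j k_j$. On $\supp\psi_j$ the hypothesis gives $2^{j\kappa/2}\lesssim m\lesssim 2^j$. From the symbol estimates of $S(m^{-\tau},g)$ we get $|\partial_\xi^\alpha\sigma_j|\lesssim m^{-\tau-|\alpha|}\lesssim 2^{-j\kappa(\tau+|\alpha|)/2}$, and $x$-derivatives cost $\langle\xi\rangle/m\lesssim 2^{j(1-\kappa/2)}$. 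Moreover, by Lemma \ref{lem:phase-space-size} with $t\asymp 2^{j}$, the $\xi$-support of $\sigma_j(x,\cdot)$ has measure $\lesssim 2^{jQ_\kappa}$ uniformly in $x$. Thus $\sigma_j$ behaves like an $S^{-\tau\kappa/2}_{\kappa/2,1-\kappa/2}$ symbol whose frequency support has effective dimension $Q_\kappa$ rather than $n$.

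\textbf{Step 1 (size estimate).} For $s>0$, Plancherel in $\xi$ gives
\[
\int |x-y|^{2N}|k_j(x,y)|^2\,dx\lesssim \sup_x\int |\partial_\xi^N\sigma_j(x,\xi)|^2 d\xi \lesssim 2^{-j\kappa(\tau+N)}2^{jQ_\kappa}.
\]
The $x$-dependence is handled by the standard trick: take the $L^2$ norm in $\xi$ of the Fourier transform in $y$ of $k_j(x,x-\cdot)$, as in \cite{alvarez-hounie}. Cauchy--Schwarz over the region $|x-z|>s$ then yields
\[
\int_{|x-z|>s}|k_j(x,y)|\,dx\lesssim s^{n/2-N}2^{-j\kappa(\tau+N)/2}2^{jQ_\kappa/2}
\]
for $N>n/2$. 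Taking $N=0$ and interpolating gives the bound without decay, valid on unit-scale regions.

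\textbf{Step 2 (difference estimate).} Write $k_j(x,y)-k_j(x,z)$ through the symbol $\sigma_j(x,\xi)(e^{-iy\cdot\xi}-e^{-iz\cdot\xi})$. Here we use the decomposition $y-z=\Theta'+\Theta''$ together with the lemma bounding $|\Theta''|\lesssim R^2$. We also use Lemma \ref{lem:distance-comparison} in the form $|y-z|\lesssim\varrho(y,z)<R$. These give $|e^{-i(y-z)\cdot\xi}-1|\lesssim\min\{1,R2^j\}$. A refinement would use $|\Theta'\cdot\xi'|\lesssim R\,|\xi'|$ with $|\xi'|\lesssim m$ on the nondegenerate directions, giving the better gain $R\,2^{j\kappa/2}$ or $R\,2^{j}$ as appropriate.

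\textbf{Step 3 (summation).} For $0<R<1$ and the region $|x-z|>2R^{\kappa/2}$, we have $|x-y|\gtrsim R^{\kappa/2}$ once $R$ is small (for $R$ comparable to $1$ this holds trivially up to constants). Split the sum at $2^{j}\asymp R^{-1}$.
\begin{itemize}
\item For $2^j\le R^{-1}$, use the difference estimate with $N=0$ over the whole space (after a weighted Cauchy--Schwarz with weight $(1+2^{j\kappa/2}|x-y|)^{n}$). This produces terms $R2^j\,2^{j(Q_\kappa/2-\tau\kappa/2)}\,2^{-jn\kappa/4}$-type factors, with $2^{j\kappa/2}$ playing the role of the inverse scale.
\item For $2^j>R^{-1}$, bound the two kernels separately using the size estimate with large $N$ over $|x-y|\gtrsim R^{\kappa/2}$.
\end{itemize}
Both sums must be geometric and bounded uniformly in $R$. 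Writing out the exponents, we need $\tau\kappa/2\ge Q_\kappa/2-n\kappa/4+\cdots$ scaled by $2/\kappa$, and this should reduce exactly to $\tau\ge \frac{n(4-\kappa^2)-r_0(4-2\kappa)}{2\kappa^2}$. A check: for $\kappa=2$ it gives $\tau\ge0$ after normalization, matching the Calder\'on--Zygmund case. For $R\ge1$ the region $|x-z|>2R$ with $|y-z|\lesssim R$ is handled directly by Lemma \ref{lem:kernel-decay} combined with the $x$-integrated decay from Step 1, with no cancellation needed. Uniformity in $z$ follows because all constants in Lemma \ref{lem:phase-space-size} and in the $\Theta''$ lemma are uniform.

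\textbf{Main obstacle.} The delicate step is Step 3 for $R<1$. The exponent bookkeeping must match the stated threshold exactly, and the mismatch between the space scale $R^{\kappa/2}$ and the frequency scale $2^{j}$ versus $m\asymp 2^{j\kappa/2}$ has to be handled consistently. The first thing to try is a split at $2^{j\kappa/2}\asymp R^{-1}$; if the exponents fail to close, switch to $2^j\asymp R^{-1}$.
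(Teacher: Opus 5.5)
Your argument has the same skeleton as the paper's, which follows \'Alvarez--Hounie: frequency pieces, a weighted Cauchy--Schwarz step plus an $L^2$ bound, cancellation of $e^{i(z-y)\cdot\xi}-1$ at low frequencies, separate size bounds at high frequencies, a split at frequency $\asymp R^{-1}$, and Lemma~\ref{lem:kernel-decay} for $R\ge1$. The genuine difference is the cutoff. The paper uses $\varphi(m(z,\xi)/t)$, adapted to the weight at the fixed point $z$; you use dyadic annuli $|\xi|\asymp 2^j$.

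\textbf{What each cutoff costs.} On $\{m(z,\cdot)\asymp t\}$ the frequency $|\xi|$ can reach $t^{2/\kappa}$. So the paper needs Lemma~\ref{lem:phase-space-size} for the volume $t^{Q_\kappa}$, uniformly in $z$. It also needs the splitting $y-z=\Theta'+\Theta''$ with $|\Theta''|\lesssim R^2$ to get $|(y-z)\cdot\xi|\lesssim tR$. On an annulus, $|(y-z)\cdot\xi|\lesssim R2^j$ follows from $|y-z|\lesssim\varrho(y,z)$ alone, so the refinement you float in Step 2 is unnecessary. The volume is simply $\asymp 2^{jn}$, and nothing depends on $z$.

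\textbf{The volume bound.} Your appeal to Lemma~\ref{lem:phase-space-size} ``with $t\asymp 2^j$'' is misplaced, because an annulus is not a level set of $m(z,\cdot)$. The bound $2^{jQ_\kappa}$ you write is nonetheless true, since $Q_\kappa\ge n$. If you carry $2^{jn}$ instead, both sums close once $\kappa\tau/2\ge n/2-n\kappa/4$, i.e.\ $\tau\ge n(2-\kappa)/(2\kappa)$. The stated threshold equals $Q_\kappa/\kappa-n/2$, so it implies this. In effect your route is \'Alvarez--Hounie's estimate transported through $S(m^{-\tau},g)\subset S^{-\kappa\tau/2}_{\kappa/2,1-\kappa/2}$. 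It shows that the $r_0$-dependence of the threshold enters only through the paper's $m$-adapted cutoff.

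\textbf{Steps that need repair.}
\begin{enumerate}
\item The displayed bound by $\sup_x\int|\partial_\xi^N\sigma_j|^2\,d\xi$ is not Plancherel, because the symbol depends on $x$ and you integrate in $x$. As the paper does, write each Leibniz term as $\mathrm{Op}(D^\beta_\xi\sigma\,\tilde\psi_j)$ applied to the function with Fourier transform $e^{-iy\cdot\xi}D^{\alpha-\beta}_\xi\psi_j$. Here $\tilde\psi_j$ is a slightly wider annular cutoff equal to $1$ on $\supp\psi_j$. Then invoke Theorem~\ref{theo:hounie-L2} in $S^0_{\kappa/2,1-\kappa/2}$; this is where $\kappa\ge1$ is used, since it gives $\delta\le\rho$. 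For the difference, put $e^{i(z-y)\cdot\xi}-1$ into the symbol, using $|D^\beta_\xi(e^{i(z-y)\cdot\xi}-1)|\lesssim(2^jR)2^{-j|\beta|}$ when $2^jR\le1$.
\item The low-frequency step cannot use $N=0$. To pass from $L^2$ to $L^1$ on all of $\mathbb{R}^n$ you need the weight $(1+2^{j\kappa}|x-z|^2)^N$ with $N>n/2$; the reciprocal of your weight $(1+2^{j\kappa/2}|x-y|)^n$ is not integrable. This weight is paid for by $N$ derivatives in $\xi$.
\item The split must be at $2^j\asymp R^{-1}$, as you wrote in Step 3 and as the paper does with $t\asymp 1/R$. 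Splitting at $2^{j\kappa/2}\asymp R^{-1}$ fails, since $\sum_{2^j\le R^{-2/\kappa}}R2^j\asymp R^{1-2/\kappa}$ is unbounded for $\kappa<2$.
\item The implication $|x-z|>2R^{\kappa/2}\Rightarrow|x-y|\gtrsim R^{\kappa/2}$ is not ``trivial up to constants'' for $R$ near $1$. It needs the constant in $|y-z|\le C\varrho(y,z)$ to be below $2$, a normalization the paper also assumes tacitly.
\end{enumerate}
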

\begin{remark}
    When $\kappa=2$, we only require that $\tau\ge0$.
\end{remark}
\begin{proof}
    The proof of the case $R\ge1$ follows from the previous lemma. Let $\varphi \in C_0^\infty(\mathbb{R})$ be supported in $\left[\tfrac{1}{2}, 1\right]$ such that
  \begin{equation*}
    \int_0^\infty\varphi(1/t)\frac{\diff t}{t} =\int_1^2\varphi(1/t)\frac{\diff t}{t} = 1.
  \end{equation*}
  Define $\varphi_t(\xi):=\varphi(m(z,\xi)/t)$s o that in its support one has that $m(z, \xi)\asymp t$. Recall that 
  \begin{equation*}
      \langle\xi\rangle^{\kappa/2} \lesssim m(z, \xi) \lesssim \langle\xi\rangle,
  \end{equation*}
  so that the following inequalities hold
  \begin{equation*}
      \langle\xi\rangle^{\kappa/2} \lesssim t\lesssim \langle\xi\rangle,
  \end{equation*}
  or equivalently
  \begin{equation*}
      t\lesssim \langle\xi\rangle \lesssim t^{2/\kappa}.
  \end{equation*}
  Moreover, let us define $\xi=\xi'+\xi''$ where $\xi''$ is in the kernel of $A(z)$ and $\xi'$ is in its image space, so that 
  \begin{equation*}
       \langle\xi''\rangle^{\kappa/2}=m(z, \xi'') \asymp t, \quad \langle\xi'\rangle \asymp m(z, \xi') \asymp t.
  \end{equation*}
  Namely, the following estimates are satisfied
  \begin{equation*}
      \langle\xi'\rangle \asymp t, \quad \langle\xi''\rangle \asymp t^{2/\kappa}.
  \end{equation*}
  On the other hand, notice that $\partial^{\nu}_{\xi} \varphi_t(\xi)$ is equal to a linear combination of terms of the form
  \begin{equation*}
      t^{-k} \varphi^{(k)}(m(z, \xi)/t) \prod_{j=1}^k \partial^{\mu_j}_\xi m(z, \xi),
  \end{equation*}
  with $1\le k\le |\nu|$ and $\sum \mu_j = \nu.$ Moreover, each of these terms can be bounded by
  \begin{equation*}
      t^{-k}\prod_{j=1}^k\left|  \partial^{\mu_j}_\xi m(z, \xi)\right| \lesssim t^{-k}\prod_{j=1}^k m(z, \xi)^{1-|\mu_j|} \lesssim t^{-k} t^{k-\sum|\mu_j|} = t^{-|\nu|}.
  \end{equation*}
  Thus, we have that 
  \begin{equation*}
      \left|\partial^{\nu}_{\xi} \varphi_t(\xi)\right| \lesssim t^{-|\nu|}.
  \end{equation*}
   On the other hand, let us define the symbol $\sigma_t$ by
  \begin{equation*}
    \sigma_t(x, \xi) := \sigma(x, \xi)\varphi_t(\xi),
  \end{equation*}
  and let $k_t:=k_t(x, y)$ be its corresponding Schwartz kernel. Hence, we have that
  \begin{equation*}
    k(x, y) = \int_0^\infty k_t(x, y)\frac{\diff t}{t} = \int_1^\infty k_t(x, y)\frac{\diff t}{t},
  \end{equation*}
  since $m(z,\xi)\ge 1.$ Let $N = \lfloor n/2+1\rfloor $ be an integer, then we obtain the estimates
  \begin{align}
    &\int_{|x-z|>2R^{\kappa/2}} |k_t(x, y) - k_t(x, z)| \,\diff x\notag\\
    &\leq  \left[ \int_{|x-z|>2R^{\kappa/2}} (1+t^{2\rho}|x-z|^2)^{N} |k_t(x, y) - k_t(x, z)|^2 \,\diff x  \right]^{1/2}\notag  \\
    &\qquad \times\left[ \int_{\mathbb{R}^n} (1+t^{2\rho}|x-z|^2)^{-N} \,\diff x \right]^{1/2} \notag  \\
    &\leq C\left[ \int_{|x-z|>2R^{\kappa/2}} (1+t^{2\rho}|x-z|^2)^{N} |k_t(x, y) - k_t(x, z)|^2 \,\diff x  \right]^{1/2}t^{-\rho n/2}.
    \label{eq:firstbound-b}
  \end{align}
  Now, notice that we can rewrite
  \begin{align*}
      k_t(x, y) - k_t(x, z)
      & = \int_{\mathbb{R}^n}  e^{i(x-y)\cdot\xi} \sigma_t(x, \xi)\,\diff\xi - \int_{\mathbb{R}^n} e^{i(x-z)\cdot\xi}\sigma_t(x, \xi)\, \diff \xi \\ 
      & = \int_{\mathbb{R}^n}  e^{i(x-z)\cdot\xi} \left[ e^{i(z-y)\cdot\xi} -  1
      \right] \sigma_t(x, \xi) \diff \xi.
  \end{align*}
  For $|\alpha|\leq N$, Leibniz rule allows us to have the equality
  \begin{align*}
      &t^{\rho|\alpha|}\int_{\mathbb{R}^n} e^{i(x-z)\cdot\xi}D^{\alpha}_{\xi}\left[\left( e^{i(z-y)\cdot\xi} -  1
      \right) \sigma_t(x, \xi)\right] \diff \xi \\
      & = t^{\rho|\alpha|} \sum_{\beta+\gamma\le\alpha}C_{\beta\gamma}\int_{\mathbb{R}^n} e^{i(x-z)\cdot\xi}D^{\beta}_{\xi}\left[ e^{i(z-y)\cdot\xi} -  1
      \right]  D^{\gamma}_{\xi} \sigma(x, \xi) D^{\alpha-\beta-\gamma}_{\xi}\varphi_t(\xi) \diff \xi,
  \end{align*}
  which is a sum of pseudo-differential operators on $\mathbb{R}^n$ associated with the symbols 
  \begin{equation*}
      \sigma_t^{\beta\gamma}(x, \xi):= D^{\beta}_{\xi}\left[ e^{i(z-y)\cdot\xi} -  1
      \right] D^{\gamma}_{\xi} \sigma(x, \xi) \chi(\xi),
  \end{equation*}
  and where the property $\chi(\xi)\varphi_t(\xi) = \varphi_t(\xi)$ holds in view of the supports of $\chi$ and $\varphi_t$. Notice that we we have the estimates
  \begin{align*}
      \left| e^{i(z-y)\cdot\xi}-1 \right| \leq |(z-y)\cdot\xi|\le |\Theta'||\xi'| +|\Theta''||\xi''| \le  C(tR + t^2R^2) \le CtR,
  \end{align*}
  when $tR<1$, and that
  \begin{align*}
      \left|\partial^\nu_\xi D^{\beta}_{\xi} e^{i(z-y)\cdot\xi}\right| & \le |y-z|^{|\beta+\nu|} \\
      & \le C(tR) t^{-|\beta|}\langle\xi\rangle^{-|\nu|/2},
    \end{align*}
  when $tR<1$ since $\langle\xi\rangle\lesssim t^2 $. Moreover, we have that
  \begin{align*}
      \left|\partial^\nu_\xi\partial_x^\mu D^{\gamma}_{\xi} \sigma(x, \xi)\right| & \leq C\|\sigma\|_{l;S}m(X)^{-\tau}m(X)^{-|\gamma+\mu+\nu|}\langle \xi\rangle^{|\mu|} \\
      & \leq C\|\sigma\|_{l;S} m(X)^{-\tau-|\gamma|}\langle\xi\rangle^{-|\mu+\nu|/2}\langle\xi\rangle^{|\mu|} \\
      & \leq C\|\sigma\|_{l;S} t^{-\kappa\tau/2-\kappa|\gamma|/2} \langle\xi\rangle^{-|\nu|/2+|\mu|/2},
  \end{align*}
  since $ t^{\kappa/2}\lesssim m(X).$ Hence, we obtain that
    \begin{align*}
        \left|\partial_\xi^\nu\partial_x^\mu \sigma_t^{\beta\gamma}(x,\xi)\right| & \leq CtR\|\sigma\|_{l;S} t^{-\kappa\tau/2-\kappa|\gamma|/2}t^{-|\beta|}\langle\xi\rangle^{-|\nu|/2+|\mu|/2} \\ 
        & = CtR\|\sigma\|_{l;S}t^{-\kappa\tau/2-\kappa|\gamma|/2-|\beta|}\langle\xi\rangle^{-|\nu|/2+|\mu|/2}.
    \end{align*}
    Thus, one can deduce that $\sigma_t^{\beta\gamma}$ satisfies (\ref{eq:rho-delta-symbol}) for $\rho=\delta=1/2,$ and that $\sigma_t^{\beta\gamma} \in S\left(1, g^{1/2,1/2}\right),$ when $tR<1$. In particular, the symbols satisfy 
  \begin{equation*}
      \|\sigma^{\beta\gamma}_t\|_{l-|\gamma|;S} \leq Ctr\|\sigma\|_{l;S}t^{-\kappa\tau/2-\kappa|\gamma|/2-|\beta|}.
  \end{equation*}
  Now, by Theorem \ref{theo:hounie-L2} we have that the operators in the summation above induce bounded operators in $L^2(\mathbb{R}^n)$. By the Plancherel identity we may estimate (\ref{eq:firstbound-b}) when  $tr<1$ by
  \begin{align*}
      & Ct^{-\rho n/2}  \sum_{|\alpha|\leq N} t^{\rho |\alpha|} \sum_{\beta+\gamma\leq\alpha}(tR)\|\sigma\|_{l;S}t^{-\kappa\tau/2-\kappa|\gamma|/2-|\beta|} \left\| e^{-iz\cdot\xi} D^{\alpha-\beta-\gamma}_\xi\varphi_t(\xi) \right\|_{L^2_\xi}  \\ 
      & \leq Ct^{-\rho n/2}  \sum_{|\alpha|\leq N} t^{\rho |\alpha|} \sum_{\beta+\gamma\leq\alpha}(tR)\|\sigma\|_{l;S}t^{-\kappa\tau/2-\kappa|\gamma|/2-|\beta|} t^{-|\alpha-\beta-\gamma|}t^{Q_\kappa/2} \\
      & \leq   \sum_{|\alpha|\leq N} \sum_{\beta+\gamma\leq\alpha}(tR)\|\sigma\|_{l;S}t^{-\kappa\tau/2-\rho n/2+Q_\kappa/2} t^{\rho|\alpha|-\kappa|\gamma|/2-|\alpha-\gamma|}.
  \end{align*}
  Here we used the fact that the volume of the support of $\varphi_t$ is comparable to $t^{Q_\kappa}$ with $Q_\kappa=r_0 + \frac{2}{\kappa}(n-r_0),$ see Lemma \ref{lem:phase-space-size}. Now, we may make the choice of $\rho$ so that the second exponent is non-positive by
  \begin{align*}
    \rho|\alpha|-\kappa|\gamma|/2-|\alpha-\gamma|
    =(\rho-1)|\alpha| + (1-\kappa/2)|\gamma| 
     \le (\rho-\kappa/2)|\alpha|.
  \end{align*}
  Thus, we choose $\rho=\kappa/2$ and we must have that the following estimate holds
  \begin{equation*}
      -\kappa\tau/2-\kappa n/4+Q_\kappa/2 \le 0.
  \end{equation*}
  On the other hand, by the triangle inequality,  we have that $|x-y|>R^{\kappa/2}, $ and that
  \begin{align}
    &\int_{|x-y|>R^{\kappa/2}} |k_t(x, y)| \,\diff x\notag\\
    &\leq  \left[ \int_{|x-y|>R^{\kappa/2}} (t^{\kappa}|x-y|^2)^{N} |k_t(x, y)|^2 \,\diff x  \right]^{1/2}\notag  \\
    &\qquad \times\left[ \int_{|x-y|>R^{\kappa/2}} (t^\kappa|x-y|^2)^{-N} \,\diff x \right]^{1/2} \notag  \\
    &\leq C\left[ \int_{|x-y|>R^{\kappa/2}} (t^\kappa|x-y|^2)^{N} |k_t(x, y)|^2 \,\diff x  \right]^{1/2}t^{-\kappa N/2}R^{(n/2-N)\kappa/2}.
    \label{eq:secondbound-b}
  \end{align}
  As before, for $|\alpha|=N$, we have that 
  \begin{align*}
      &t^{\kappa|\alpha|/2} (x-y)^\alpha  \int_{\mathbb{R}^n} e^{i(x-y)\cdot\xi} \sigma(x, \xi)\varphi_t(\xi)\,\diff\xi \\
      & =  t^{\kappa|\alpha|/2} \sum_{\beta\leq\alpha}C_{\alpha\beta}  \int_{\mathbb{R}^n}e^{i(x-y)\cdot\xi} D^\beta_\xi \sigma(x, \xi) D^{\alpha-\beta}_\xi \varphi_t(\xi)\, \diff \xi.
  \end{align*}
   Notice this is a sum of pseudo-differential operators in $\mathbb{R}^n$ with symbols 
  \begin{equation*}
      \sigma^\beta_t(x,\xi):=D^\beta_\xi \sigma(x, \xi)\chi_t(\xi),
  \end{equation*}
  which satisfy (\ref{eq:rho-delta-symbol}) for $\rho=\delta=1/2.$ Namely,  $\sigma_t^{\beta} \in S\left(1, g^{1/2,1/2}\right),$  with norms
  \begin{equation*}
      \|\sigma^\beta_t\|_{l-|\beta|;S} \le t^{-\kappa\tau/2 - \kappa|\beta|/2}\|\sigma\|_{l;S}.
  \end{equation*}
  Moreover, by Theorem \ref{theo:hounie-L2}, it is bounded in $L^2(\mathbb{R}^n)$, and we may estimate (\ref{eq:secondbound-b}) using Plancherel by 
  \begin{align*}
      & Ct^{- \kappa N/2}R^{(n/2 - N)\kappa/2} \sum_{|\alpha|\leq N} t^{\kappa|\alpha|/2} \sum_{\beta\leq\alpha} t^{-\kappa\tau/2 - \kappa|\beta|/2}\|\sigma\|_{l;S} \left\| e^{-iy\cdot\xi}D^{\alpha-\beta}_{\xi} \varphi_t(\xi)
      \right\|_{L^2_\xi} \\ 
      &\leq Ct^{- \kappa N/2}R^{(n/2 - N)\kappa/2} \sum_{|\alpha|\leq N} t^{\kappa|\alpha|/2} \sum_{\beta\leq\alpha} t^{-\kappa\tau/2 - \kappa|\beta|/2}\|\sigma\|_{l;S} t^{-|\alpha-\beta|}t^{Q_0/2} \\
      &\leq C(tR)^{(n/2-N)\kappa/2}\|\sigma\|_{l;S} t^{-\kappa\tau/2-\kappa n/4+Q_0/2} \\
      &\leq C(tR)^{(n/2-N)\kappa/2}\|\sigma\|_{l;S},
  \end{align*}
  because of the choice of $\tau$ as in the previous estimate. Using the same argument, we can estimate
  \begin{equation*}
    \int_{|x-z|>2R^{\kappa/2}} |k_t(x, z)| \,\diff x \leq C\|\sigma\|_{l;S}(tR)^{(n/2 - N)\kappa/2},
  \end{equation*}
  by also writing it as a sum of pseudo-differntial operators in $\mathbb{R}^n$. Using these estimates, we get the result from the following expression:
  \begin{align*}
     \int_{\varrho(x, z)> 2R^{\kappa/2}}|k(x, y) - k(x, z)|\,\diff x 
    &\leq C\|\sigma\|_{l;S}\left[ \int_1^{1/R} tR\frac{\diff t}{t} + \int_{1/R}^\infty(tR)^{(n/2-N)\kappa/2}\frac{\diff t}{t}\right]\\
    &\leq C\|\sigma\|_{l;S}\left[R(1/R-1) + \frac{1}{(N-n/2)\kappa/2} \right] \\ 
    &\leq C\|\sigma\|_{l;S}.
  \end{align*}
  Thus, obtaining the result.
\end{proof}
Here, we use the properties of Bessel potentials to obtain a $L^q$-$L^2$-boundedness result.
\begin{theorem}\label{theo:Lq-L2}
    Let $\sigma\in S(m^{-\tau},g)$, and $1<q<2$. Then, $\sigma(X, D)$ is bounded from $L^q(\mathbb{R}^n)$ to $L^2(\mathbb{R}^n)$ if 
    \begin{equation*}
        \tau \ge \frac{2n}{\kappa}\left| \frac{1}{2}-\frac{1}{q} \right|.
    \end{equation*}
\end{theorem}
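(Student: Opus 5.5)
The plan is to factor $\sigma(X,D)$ through a Bessel potential and then use the $L^2$-boundedness available for the resulting zero-order operator. Set $s:=-n\left(\frac{1}{q}-\frac{1}{2}\right)<0$. By the theorem on Bessel potentials (applied with $p=q$ and target exponent $2$), $\mathfrak{B}^{s}$ is bounded from $L^q$ to $L^2$. We write
\begin{equation*}
\sigma(X,D)=\bigl[\sigma(X,D)\,\mathfrak{B}^{-s}\bigr]\,\mathfrak{B}^{s}.
\end{equation*}
It then suffices to show that $T:=\sigma(X,D)\mathfrak{B}^{-s}$ is bounded on $L^2$. Since $-s=n(1/q-1/2)=n|1/2-1/q|$, the order hypothesis reads $\tau\ge \frac{2}{\kappa}(-s)$, that is, $\kappa\tau/2\ge -s$.

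The operator $T$ is a Kohn--Nirenberg operator whose symbol is exactly $\sigma(x,\xi)\langle\xi\rangle^{-s}$, because $\mathfrak{B}^{-s}$ is a Fourier multiplier acting on the right. We check its size using the hypoellipticity condition \eqref{kappa:1:2}, namely $\langle\xi\rangle^{\kappa/2}\lesssim m$. This gives
\begin{equation*}
|\sigma(x,\xi)|\langle\xi\rangle^{-s}\lesssim m^{-\tau}\langle\xi\rangle^{-s}\lesssim \langle\xi\rangle^{-\kappa\tau/2-s}\le C.
\end{equation*}
For the derivatives we proceed as in the proof of Theorem \ref{theo:kernel-estimate-first}. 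Each $\xi$-derivative of $\sigma$ gains a factor $m^{-1}\lesssim\langle\xi\rangle^{-\kappa/2}\le\langle\xi\rangle^{-1/2}$. Each $x$-derivative costs a factor $\langle\xi\rangle/m\lesssim\langle\xi\rangle^{1-\kappa/2}\le\langle\xi\rangle^{1/2}$. Derivatives falling on $\langle\xi\rangle^{-s}$ only improve the bounds. Hence
\begin{equation*}
|\partial_\xi^\alpha\partial_x^\beta(\sigma\langle\xi\rangle^{-s})|\lesssim \langle\xi\rangle^{-\kappa\tau/2-s-\rho|\alpha|+\delta|\beta|},
\end{equation*}
with $\rho=\kappa/2$ and $\delta=1-\kappa/2$ (or more crudely $\rho=\delta=1/2$). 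This places the symbol in $S^{0}_{\rho,\delta}$.

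We then invoke Hounie's Theorem \ref{theo:hounie-L2} with $m=-\kappa\tau/2-s\le 0$. The only delicate point is the restriction $\delta<1$ together with $m\le -n\max\{0,(\delta-\rho)/2\}$. With $\rho=\kappa/2$ and $\delta=1-\kappa/2$ we have $\delta\le\rho$ for $\kappa\ge1$, so the requirement is just $m\le0$, which holds. When $\kappa=1$ both values equal $1/2$, and Calder\'on--Vaillancourt in $S^0_{1/2,1/2}$ applies. Hence $T$ is bounded on $L^2$, and composing with $\mathfrak{B}^s:L^q\to L^2$ gives the claim.

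The main obstacle is the symbol-class verification. One must make sure that the mixed derivatives of $\sigma\in S(m^{-\tau},g)$ really give $(\rho,\delta)$ bounds, with constants uniform in $x$, despite the degeneracy of $m$ in the directions of $\ker A(x)$. This is where \eqref{kappa:1:2} is used, in the two-sided form $\langle\xi\rangle^{\kappa/2}\lesssim m\lesssim\langle\xi\rangle$. One must also keep track of the exponent $s$, which is negative and reduces the order. Everything else is a direct composition of already stated results.
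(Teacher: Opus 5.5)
Your proof is correct. Its skeleton is the paper's: factor $\sigma(X,D)$ through the Bessel potential of order $-n(1/q-1/2)$, which maps $L^q$ to $L^2$, and prove $L^2$-boundedness of the remaining operator. Your sign convention is the right one, since it is the negative-order potential that maps $L^q$ into $L^2$. The Bessel-potential statement displayed in the paper has the sign of $s$ reversed, although the paper's proof of this theorem uses the correct order.

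The two arguments differ only in the $L^2$ step.

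\begin{itemize}
\item The paper stays inside the Weyl--H\"ormander framework. With its $s=n(1/q-1/2)\ge 0$, it shows $\mathfrak{B}^{s}\in S(m^{2s/\kappa},g)$, using $\langle\xi\rangle^{s}\lesssim m^{2s/\kappa}$ for the size and $m\lesssim\langle\xi\rangle$ for the derivatives. Then $\sigma(x,D)\mathfrak{B}^{s}$ has symbol in $S(m^{2s/\kappa-\tau},g)\subset S(1,g)$, and it appeals to $L^2$-continuity for that class (e.g.\ Theorem \ref{Delgado:Theorem:2016} with $p=2$).
\item You instead use $\langle\xi\rangle^{\kappa/2}\lesssim m$ to leave the $S(m,g)$ setting altogether. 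The exact Kohn--Nirenberg product symbol $\sigma(x,\xi)\langle\xi\rangle^{-s}$ lies in $S^{0}_{\kappa/2,\,1-\kappa/2}\subset S^{0}_{1/2,1/2}$, and Theorem \ref{theo:hounie-L2} (or Calder\'on--Vaillancourt) concludes.
\end{itemize}

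Your route is more elementary and self-contained. It uses only the defining estimates of $S(m^{-\tau},g)$ and a classical $L^2$ theorem, with no appeal to $L^2$-continuity of $S(1,g)$ operators in the Kohn--Nirenberg quantization. It is also the same reduction to $(\rho,\delta)$-classes already used in the proof of Theorem \ref{theo:kernel-estimate-first}.

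The paper's route invokes the hypoellipticity bound only once, to compare $\langle\xi\rangle^{s}$ with a power of $m$. It also keeps the operator inside the $S(m,g)$ calculus, which is the more natural setting if further compositions are needed.
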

\begin{proof}
    Let us define
    \begin{equation*}
        s:= n\left(\frac{1}{q}-\frac{1}{2}\right),
    \end{equation*}
    so that $\mathfrak{B}^{-s}$ is bounded from $L^q$ to $L^2.$ Since $s\ge0,$ we have that 
    \begin{equation*}
        \langle\xi\rangle^s \lesssim m(x, \xi)^{2s/\kappa},
    \end{equation*}
    for every $x\in\mathbb{R}^n.$ Moreover, the following symbolic inequalities are satisfied
    \begin{equation*}
        \left|\partial^\nu_\xi\partial^\mu_x\langle\xi\rangle^s\right| \lesssim \langle\xi\rangle^{s-|\nu|} \lesssim m(x,\xi)^{2(s-|\nu|)/\kappa} \le m(x, \xi)^{2s/\kappa -|\nu|}.
    \end{equation*}
    Thus, we may conclude that $\mathfrak{B}^s\in S(m^{2s/\kappa},g).$  Then, we have that $\sigma(x, D)\mathfrak{B}^s$ is bounded in $L^2$ whenever $2s/\kappa-\tau\le 0.$ Thus, we obtain that
    \begin{align*}
        \|\sigma(x, D)\mathfrak{B}^s\mathfrak{B}^{-s}u\|_{L^2} & \le C\|\sigma\|_{(l)}\|\mathfrak{B}^{-s}u\|_{L^2} \\
        & \le C\|\sigma\|_{(l)}\|u\|_{L^q}.
    \end{align*}
     This completes the proof.
\end{proof}
Finally, we obtain the desired boundedness result.
\begin{theorem}\label{theo:weak-(1,1)}
  Let $\sigma \in S(m^{-\tau},g)$. Assume that 
  \begin{equation*}
      |a(x, \xi)+\langle\xi\rangle| \ge C\langle\xi\rangle^{\kappa},
  \end{equation*}
  for $\kappa \in [1, 2],$ and that
  \begin{equation*}\label{eq:kernel-estimate-weak-R<1}
      \tau \ge \max\left\{ \frac{n(4-\kappa^2)-r_0(4-2\kappa)}{2\kappa^2}, \frac{n}{\kappa}\left(1-\frac{\kappa}{2r}\right)\right\}.
  \end{equation*}
  Then, the operator $\sigma(X, D)$ is bounded from $L^1(\mathbb{R}^n)$ to $L^{1,\infty}(\mathbb{R}^n).$
\end{theorem}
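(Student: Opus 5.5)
The plan is to apply the \'Alvarez--Milman criterion, Theorem \ref{theo:alvarez-milman}, to $T=\sigma(X,D)$. Four hypotheses have to be checked: $L^2$-boundedness, $L^q$--$L^2$ boundedness for a suitable $q$ with $\frac1q=\frac12+\frac{\beta}{n}$ and $\frac n2(1-\alpha)\le\beta<\frac n2$, and the two kernel conditions \eqref{eq:alvarez-milman-R<1}--\eqref{eq:alvarez-milman-R>1}, all stated in terms of the Euclidean distance.

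For the kernel conditions I would use Theorem \ref{theo:kernel-estimate-first}. Its order hypothesis is the first entry of the maximum defining the lower bound for $\tau$. It gives control of $\int_{|x-z|>2R^{\kappa/2}}|k(x,y)-k(x,z)|\,\diff x$ uniformly over $\varrho(y,z)<R$, but \'Alvarez--Milman asks for the supremum over $|y-z|<R$ in Euclidean terms. To translate, I would use Lemma \ref{lem:distance-comparison}: for $|y-z|<R<1$ we have $\varrho(y,z)\le C_2R^{1/r}=:R'$. Applying Theorem \ref{theo:kernel-estimate-first} with $R'$ in place of $R$ (and a harmless rescaling of constants if $R'\ge1$) controls the integral over $|x-z|>2(R')^{\kappa/2}=cR^{\kappa/(2r)}$. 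So \eqref{eq:alvarez-milman-R<1} holds with $\alpha=\frac{\kappa}{2r}$. For $R\ge1$, the same lemma gives $\varrho(y,z)\le C_2R$, and the large-scale estimate \eqref{eq:kernel-estimate-weak-R>1} yields \eqref{eq:alvarez-milman-R>1}, with the constant $c$ absorbing $C_2$. The constants are uniform in $z$ by Theorem \ref{theo:kernel-estimate-first}.

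For the $L^2$ bound, $\tau\ge0$ suffices, by Theorem \ref{Delgado:Theorem:2016} with $p=2$ (or by Calder\'on--Vaillancourt for $S(1,g)$). For the $L^q$--$L^2$ bound, I would take $\beta=\frac n2(1-\alpha)=\frac n2\bigl(1-\frac{\kappa}{2r}\bigr)$, which lies in $[0,\frac n2)$. Then $\frac1q-\frac12=\frac{\beta}{n}$, and Theorem \ref{theo:Lq-L2} applies provided
\[
\tau\ge\frac{2n}{\kappa}\cdot\frac{\beta}{n}=\frac{n}{\kappa}\Bigl(1-\frac{\kappa}{2r}\Bigr),
\]
which is exactly the second entry of the maximum. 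If $\beta=0$ (only possible if $\kappa=2r$, excluded unless $r=1$), then $q=2$ and the $L^2$ bound already suffices. With all hypotheses verified, Theorem \ref{theo:alvarez-milman} gives the weak $(1,1)$ estimate.

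The main obstacle is the metric translation step. The kernel estimate is phrased with a $\varrho$-ball for $y$ and a Euclidean exclusion region $|x-z|>2R^{\kappa/2}$, and one must check that the exponents compose correctly to $\alpha=\kappa/(2r)$, which is where the UHC enters. One must also handle the transition regime where $R<1$ but $R'=C_2R^{1/r}\ge1$. I would treat that regime by using the $R\ge1$ estimate at scale $R'$; since $|x-z|>cR'$ then contains the required region up to constants, this should be harmless. I would also double-check that the constant $c$ in \'Alvarez--Milman may be taken arbitrary, which is standard from its Calder\'on--Zygmund-type proof.
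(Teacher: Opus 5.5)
Your proposal is correct and follows the same route as the paper. You apply Theorem \ref{theo:alvarez-milman} with $\alpha=\kappa/(2r)$ and $\beta=\frac{n}{2}\bigl(1-\frac{\kappa}{2r}\bigr)$, get the kernel conditions from Theorem \ref{theo:kernel-estimate-first} via Lemma \ref{lem:distance-comparison}, and get the $L^q$--$L^2$ bound from Theorem \ref{theo:Lq-L2}. Your version is also somewhat more careful than the paper's: you use the upper bound $\varrho(y,z)\le C_2\max\{|y-z|^{1/r},|y-z|\}$, which is the direction actually needed (the paper cites $|y-z|\lesssim\varrho$ for $R\ge1$), and you treat explicitly the intermediate regime $R<1\le C_2R^{1/r}$, the Euclidean exclusion region, and the endpoint $q=2$.
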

\begin{proof}
    This is a direct application of Theorem \ref{theo:alvarez-milman}. Notice that when $R\ge1,$ the estimates (\ref{eq:alvarez-milman-R>1}) and (\ref{eq:kernel-estimate-weak-R>1}) are equivalent since $|y-z|\lesssim \varrho(x, y).$ For the case $R<1,$ notice that the requirement $|y-z|<R$ from (\ref{eq:alvarez-milman-R<1}) implies $\varrho(y, z)<CR^{1/r}$. Thus, we may choose 
    \begin{equation*}
        \alpha = \frac{\kappa}{2r}.
    \end{equation*}
    Indeed, evaluating in (\ref{eq:kernel-estimate-weak-R<1}) gives us 
    \begin{equation*}
        \sup_{\varrho(y,z)<R^{1/r}} \int_{\varrho(x, z)>2R^{\kappa/2r}} |k(x,y)-k(x, z)|\diff x \leq C\|\sigma\|_{(l)}.
    \end{equation*}
    Finally, we set 
    \begin{equation*}
        \beta=\tfrac{n}{2}(1-\kappa/2r),
    \end{equation*}
    and apply Theorem \ref{theo:Lq-L2} to obtain the $L^q$-$L^2$ boundedness.
\end{proof}
\subsection{$H^p_\varrho$-$L^p$-Boundedness}
Let us define 
\begin{equation*}
    A_j(z, R) := \{ x \in \mathbb{R}^n : 2^jR \le |x-z| < 2^{j+1}R\}. 
\end{equation*}
Now, we present some ``dyadic" kernel estimates as in \cite{alvarez-hounie}.
\begin{theorem}\label{theo:kernel-estimate-dyadic}
  Let $\sigma \in S(m^{-\tau},g)$. Assume that 
  \begin{equation*}
      |a(x, \xi)+\langle\xi\rangle| \ge C\langle\xi\rangle^{\kappa},
  \end{equation*}
  for $\kappa \in [1, 2],$ and that
  \begin{equation*}
      \tau \ge \frac{n(4-\kappa^2)-r_0(4-2\kappa)}{2\kappa^2}.
  \end{equation*}
  Then the Schwartz kernel $k:=k(x, y)$ of $\sigma(X, D)$ satisfies the following estimate for any fixed $z\in \mathbb{R}^n$:
  \begin{equation*}
    \sup_{\varrho(y,z)<R} \int_{A_j(z, R^\gamma)} |k(x,y)-k(x, z)|\diff x \leq C\|\sigma\|_{l;S}2^{-2j/\kappa}R^{1-2\gamma/\kappa},
  \end{equation*}
  whenever $0<R< 1$ and $\gamma\le\kappa/2$. Moreover, we have that 
  \begin{equation*}
      \sup_{\varrho(y,z)<R} \int_{A_j(z, R)} |k(x,y)-k(x, z)|\diff x \leq C\|\sigma\|_{l;S}2^{-j},
  \end{equation*}
  when $R\ge1.$ The constant $C$ does not depend on $z$. 
\end{theorem}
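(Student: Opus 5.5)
We rerun the Littlewood--Paley argument from the proof of Theorem \ref{theo:kernel-estimate-first}, now localized to the dyadic annuli $A_j(z,R^\gamma)$. We fix $z$ and use the decomposition $\varphi_t(\xi)=\varphi(m(z,\xi)/t)$, which gives $k=\int_1^\infty k_t\,\frac{\diff t}{t}$. We keep the symbol bounds $|\partial^\nu_\xi\varphi_t|\lesssim t^{-|\nu|}$ and the phase-space volume bound $t^{Q_\kappa}$ from Lemma \ref{lem:phase-space-size}. The order condition on $\tau$ is exactly what made the exponent $-\kappa\tau/2-\kappa n/4+Q_\kappa/2$ non-positive there, and we use it the same way here. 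For each $t$ there are two estimates. The \emph{smoothness} estimate is valid when $tR<1$. The \emph{size} estimate bounds $\int_{A_j}|k_t(x,y)|+|k_t(x,z)|\,\diff x$ and is used when $tR\ge1$.

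For the smoothness estimate we write $k_t(x,y)-k_t(x,z)$ with the factor $e^{i(z-y)\cdot\xi}-1$, which is $O(tR)$ on the support of $\varphi_t$, as before. Cauchy--Schwarz is now taken only over $A_j(z,R^\gamma)$, with weight $|x-z|^{2N}$ for $|\alpha|=N$. This gives
\[
\int_{A_j}|k_t(x,y)-k_t(x,z)|\,\diff x\lesssim (2^jR^\gamma)^{n/2-N}\Big\|\,|x-z|^N (k_t(\cdot,y)-k_t(\cdot,z))\Big\|_{L^2}.
\]
By Leibniz, Theorem \ref{theo:hounie-L2} and Plancherel, the $L^2$ factor is $\lesssim tR\,t^{-\kappa N/2}t^{-\kappa\tau/2+Q_\kappa/2}\|\sigma\|_{l;S}$, with each $\xi$-derivative gaining at least $t^{-\kappa/2}$. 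The order condition then gives the bound $tR\,(2^jR^\gamma t^{\kappa/2})^{n/2-N}$. For the size estimate the same weighted computation, without the difference, gives $(2^jR^\gamma t^{\kappa/2})^{n/2-N}$. We also keep the unweighted version ($N=0$ in the first step and the full-space bound $\int|k_t|\lesssim 1$ from the proof of Theorem \ref{theo:kernel-estimate-first}), namely $tR$ and $1$ respectively. We interpolate between $N=0$ and $N=\lfloor n/2+1\rfloor$, or use a non-integer effective exponent, so that the decay exponent $s:=N-n/2>0$ can be tuned.

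We then integrate in $t$, splitting at $t_0=(2^jR^\gamma)^{-2/\kappa}$, where $2^jR^\gamma t^{\kappa/2}=1$, and at $1/R$. Since $\gamma\le\kappa/2$ and $R<1$, we have $t_0\le R^{-2\gamma/\kappa}2^{-2j/\kappa}$. Two regimes follow.
\begin{itemize}
\item For $t<t_0$ we use the unweighted smoothness bound, whose contribution is $\int^{t_0}tR\,\frac{\diff t}{t}\lesssim R\,t_0=2^{-2j/\kappa}R^{1-2\gamma/\kappa}$. This is exactly the claimed right-hand side.
\item For $t_0<t<1/R$ we use the weighted smoothness bound, which gives $\int_{t_0}tR(t/t_0)^{-s\kappa/2}\,\frac{\diff t}{t}$. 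Choosing $s\kappa/2>1$ makes this $\lesssim R\,t_0$ again. For $t>1/R$ the weighted size bound gives $(t/t_0)^{-s\kappa/2}$, and integrating yields $(Rt_0)^{s\kappa/2}\le Rt_0$ since $Rt_0\le1$. If instead $t_0>1/R$, the region $t>t_0$ is handled by the size bound alone and $t\in(1/R,t_0)$ by the unweighted size bound, which is $\lesssim 1\le Rt_0$ up to logs. This case needs care, and it is where $\gamma\le\kappa/2$ together with $R<1$ is used to force $Rt_0\gtrsim1$.
\end{itemize}

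For $R\ge1$ we proceed as in Lemma \ref{lem:kernel-decay}. On $A_j(z,R)$ we have $|x-z|\asymp 2^jR\ge 2R$, and $|y-z|\lesssim \varrho(y,z)<R$, so $|x-y|\asymp 2^jR$ for $j\ge1$. The pointwise decay $|k(x,y)|\lesssim|x-y|^{-M}$, valid for $|x-y|\ge1$ with $M$ large, then gives $\int_{A_j}|k(x,y)|+|k(x,z)|\,\diff x\lesssim (2^jR)^{n-M}\le 2^{-j}$. The small $j$ are absorbed into constants, or handled by the same bound with $M$ large.

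The main obstacle is the bookkeeping of the exponent of $2^j$. The straightforward split above gives the factor $2^{-2j/\kappa}$ only through $t_0$, so one must check that the weighted tails never dominate $Rt_0$. This requires choosing $s$ with $s\kappa/2\ge1$ while still applying Theorem \ref{theo:hounie-L2} with $\rho=\delta=1/2$-type symbols. I would first try $N=\lfloor n/2\rfloor+2$, and interpolate if needed.
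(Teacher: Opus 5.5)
Your proposal is correct and is essentially the paper's argument. The paper applies Cauchy--Schwarz on $A_j(z,R^\gamma)$ with the inhomogeneous weight $(1+t^{\kappa}|x-z|^2)^N$, obtaining $tR\,F(t^{\kappa/2}2^jR^\gamma)$ with $F(r)\lesssim\min\{r^{n/2},r^{n/2-N}\}$, which packages your unweighted/weighted split at $t_0$; it then uses the same size bound for $t>1/R$ (your tail $(Rt_0)^{s\kappa/2}$ is exactly its term $2^{-js}R^{s(\kappa/2-\gamma)}$) and Lemma~\ref{lem:kernel-decay} for $R\ge1$.

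Two small corrections:
\begin{itemize}
\item The hypotheses $\gamma\le\kappa/2$, $R<1$ and $j\ge0$ give $Rt_0=2^{-2j/\kappa}R^{1-2\gamma/\kappa}\le 1$, not $Rt_0\gtrsim1$. So the case $t_0>1/R$ is vacuous, and this inequality is precisely what justifies $(Rt_0)^{s\kappa/2}\le Rt_0$.
\item You need an integer $N>n/2+2/\kappa$, which your first try $N=\lfloor n/2\rfloor+2$ can miss for $\kappa$ near $1$. However, $N$ may be taken as large as you like, since neither Theorem~\ref{theo:hounie-L2} nor the Leibniz step limits it, so no interpolation is needed.
\end{itemize}
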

\begin{proof}
    The proof of the case $R\ge1$ comes from Lemma \ref{lem:kernel-decay}, so we may assume that $R<1$ for the rest of the proof. We define the symbol $\sigma_t$ as in the proof of Theorem \ref{theo:kernel-estimate-first}. Let $N>n/2$ be an integer to be precised later, then we obtain the estimates
  \begin{align*}
    &\int_{A_j(z, R^\gamma)} |k_t(x, y) - k_t(x, z)| \,\diff x\notag\\
    &\leq  \left[ \int_{\varrho(x, z)>2R^{\kappa/2}} (1+t^{\kappa}|x-z|^2)^{N} |k_t(x, y) - k_t(x, z)|^2 \,\diff x  \right]^{1/2}\notag  \\
    &\qquad \times\left[ \int_{A_j(z, R^\gamma)} (1+t^{\kappa}|x-z|^2)^{-N} \,\diff x \right]^{1/2} \notag  \\ 
    & \le CtR\|\sigma\|_{l;S}t^{\kappa n/4}\left[ \int_{A_j(z, R^\gamma)} (1+t^{\kappa}|x-z|^2)^{-N} \,\diff x \right]^{1/2},
  \end{align*}
  when $tR<1$, in view of the proof of Theorem \ref{theo:kernel-estimate-first} and the fact that $2R^\gamma\lesssim \varrho(x, z)$.
    To estimate the second factor, consider the function
    \begin{equation*}
        F(r)=\left(\int_{r}^{2r}(1+s^2)^{-N}s^{n-1}\diff{s}\right)^{1/2},
    \end{equation*}
    for $0<r<\infty.$ Note that $F(r)$ is a smooth function, that $F(r)\lesssim r^{n/2}$ as $r\to0$ and that $F(r)\lesssim r^{n/2-N}$ as $r\to\infty$. Hence, we have that 
    \begin{align*}
        &\left[ \int_{A_j(z, R^\gamma)} (1+t^{\kappa}|x-z|^2)^{-N} \,\diff x \right]^{1/2}\\
        &\le \left[ \int_{2^jR^\gamma\le |x-z|\le 2^{j+1}R^\gamma} (1+t^{\kappa}|x-z|^2)^{-N} \,\diff x \right]^{1/2}\\
        & \le C\left[ t^{-\kappa n/2}\int_{t^{\kappa/2}2^jR^\gamma}^{2(t^{\kappa/2}2^jR^\gamma)} (1+s^2)^{-N} s^{n-1} \,\diff s \right]^{1/2}\\
        & \le C t^{-\kappa n/4} F(t^{\kappa/2} 2^jR^\gamma) ,
    \end{align*}
    where we used the change of variables $s=t^{\kappa/2}|x-z|.$ On the other hand, in the proof of Theorem \ref{theo:kernel-estimate-first} it was proved that 
    \begin{align*}
        \sup_{\varrho(y, z)<R}\int_{|x-z|>2(2^{2j/\kappa} R^{2\gamma/\kappa} )^{\kappa/2}} |k_t(x, y)|+|k_t(x, z)|\diff x &\le C\|\sigma\| (t^{\kappa/2} 2^{j}R^{\gamma})^{n/2-N}.
    \end{align*}
    Combining these two estimates we obtain that
    \begin{align*}
        & \sup_{\varrho(y,z)<R} \int_{A_j(z, R^\gamma)} |k(x,y)-k(x, z)|\diff x \\
        & \le C\|\sigma\|_{(l)} \left[ \int_1^{1/R} tRF(t^{\kappa/2}2^jR^\gamma) \frac{\diff t}{t} + \int_{1/R}^\infty (t^{\kappa/2} 2^{j}R^{\gamma})^{n/2-N} \frac{\diff t}{t}
        \right] \\
        &  \le C\|\sigma\|_{(l)}\left[  R2^{-2j/\kappa}R^{-2\gamma/\kappa} + (2^{(N-n/2)\kappa/2 })^{-2j/\kappa} (R^{(N-n/2)\kappa/2})^{1-2\gamma/\kappa}
        \right] \\
        & \le C\|\sigma\|_{(l)}2^{-2j/\kappa}R^{1-2\gamma/\kappa}
    \end{align*}
    where we chose $N$ so that $(N-n/2)\kappa/2 >1.$
\end{proof}
\begin{theorem}
    Let $\sigma \in S(m^{-\tau},g)$  and let 
    \begin{equation*}
        \frac{n}{2}\left(1-\frac{n\kappa}{2Q}\right) \le \beta .
    \end{equation*}
    Assume that 
    \begin{equation*}
        \tau \ge \max \left\{\frac{n(4-\kappa^2)-r_0(4-2\kappa)}{2\kappa^2}, \frac{2}{\kappa}\beta \right\}.
    \end{equation*}
     Then, $\sigma(x, D)$ is bounded from $H^p_\varrho$ to $L^p$ for $p_0\le p\le1$ and 
     \begin{equation*}
         \frac{1}{p_0}=\frac{1}{2} + \frac{\beta Q(1/n+\kappa/4) }{Q+\beta  Q\kappa /2 - n\kappa/2  }, \quad \beta<\frac{n}{2};
     \end{equation*}
     \begin{equation*}
         \frac{1}{p_0}=\frac{1}{2} + \frac{ Q(1/2+n\kappa/8) }{Q+n  Q\kappa /4 - n\kappa/2  }, \quad \beta\ge\frac{n}{2}.
     \end{equation*}
     If $\kappa=2$ and $Q=n$, then $\sigma(x, D)$ is bounded from $H^p_\varrho$ to $L^p$ for $p_0< p\le1$ and 
     \begin{equation*}
         p_0=\frac{n}{n+1}.
     \end{equation*}
\end{theorem}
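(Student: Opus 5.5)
The proof is an atomic-decomposition argument in the style of Álvarez and Hounie. By the atomic definition of $H^p_\varrho$ and the equivalence of the $H^{p,q}_\varrho$ spaces (Coifman--Weiss), I take $(p,2)$-atoms. It then suffices to prove $\|\sigma(x,D)b\|_{L^p}\le C$ uniformly over atoms $b$ supported in a sub-unitary ball $B(z,R)$ with $\|b\|_{L^2}\le R^{Q(1/2-1/p)}$ and $\int b=0$. Since $p\le1$, the $p$-triangle inequality $\|\sum\lambda_j Tb_j\|_p^p\le\sum|\lambda_j|^p\|Tb_j\|_p^p$ then gives the theorem. For each atom I split $\mathbb{R}^n$ into a Euclidean ball $B_E(z,2R^\gamma)$, with $\gamma\le\kappa/2$ fixed later, and the dyadic annuli $A_j(z,R^\gamma)$, $j\ge1$.

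\emph{Local part.} I use the $L^q$--$L^2$ bound of Theorem \ref{theo:Lq-L2} with $1/q=1/2+\beta/n$, valid since $\tau\ge 2\beta/\kappa$. When $\beta\ge n/2$ this degenerates, and I instead use $\tau\ge n/\kappa$ together with the endpoint embedding; this is why the second formula for $p_0$ freezes at $\beta=n/2$. The atom satisfies $\|b\|_{L^q}\le |B(z,R)|^{1/q-1/2}\|b\|_{L^2}\lesssim R^{Q(1/q-1/p)}$. Hölder on $B_E(z,2R^\gamma)$ then gives
\[
\|Tb\|_{L^p(B_E(z,2R^\gamma))}\lesssim R^{\gamma n(1/p-1/2)}R^{Q(1/q-1/p)}.
\]
This is bounded provided $\gamma n(1/p-1/2)\ge Q(1/p-1/q)$.

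\emph{Far part.} Using $\int b=0$, I write $Tb(x)=\int (k(x,y)-k(x,z))b(y)\,dy$. Theorem \ref{theo:kernel-estimate-dyadic} (whose hypothesis on $\tau$ is assumed) gives
\[
\|Tb\|_{L^1(A_j)}\lesssim 2^{-2j/\kappa}R^{1-2\gamma/\kappa}\|b\|_{L^1}\lesssim 2^{-2j/\kappa}R^{1-2\gamma/\kappa}R^{Q(1-1/p)}.
\]
Hölder on $A_j$, of volume about $(2^jR^\gamma)^n$, yields
\[
\|Tb\|_{L^p(A_j)}\lesssim 2^{j(n(1/p-1)-2/\kappa)}R^{\gamma n(1/p-1)+1-2\gamma/\kappa+Q(1-1/p)}.
\]
The sum over $j$ converges when $n(1/p-1)<2/\kappa$. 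The power of $R$ is nonnegative when $(1/p-1)(\gamma n-Q)+1-2\gamma/\kappa\ge0$. For $R\ge1$ I use the second estimate of Theorem \ref{theo:kernel-estimate-dyadic} with $\gamma=1$ and the global $L^2$ (or $L^q$--$L^2$) bound; this case is easier because the volume factors only help.

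\emph{Choice of parameters.} I eliminate $\gamma$ between the local constraint and the far constraint and take the borderline $p$. Solving the two linear inequalities in $1/p$ (with $1/q=1/2+\beta/n$) should reproduce the stated $1/p_0$. The hypothesis $\frac n2(1-\frac{n\kappa}{2Q})\le\beta$ should be exactly what guarantees the resulting $\gamma\le\kappa/2$, which Theorem \ref{theo:kernel-estimate-dyadic} requires. I also need $p_0>Q/(Q+1)$, the Macías--Segovia range. When $\kappa=2$ and $Q=n$, the sum over $j$ forces the strict inequality $n(1/p-1)<1$, i.e.\ $p>n/(n+1)$, which explains the open endpoint there. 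The main obstacle is this bookkeeping: I must check that the optimal $\gamma$ is admissible and gives exactly the displayed $p_0$, and handle the endpoint $p=p_0$, where the $j$-series is at worst geometric with ratio $<1$ unless $\kappa=2$.
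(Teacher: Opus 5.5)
Your plan is essentially the paper's proof, step for step: the atomic reduction, the split into $B_E(z,2R^\gamma)$ and the annuli $A_j(z,R^\gamma)$, the local part via H\"older and Theorem~\ref{theo:Lq-L2} with $1/q=1/2+\beta/n$, the far part via $\int b=0$, Theorem~\ref{theo:kernel-estimate-dyadic} and H\"older, and finally equating the resulting lower and upper bounds on $\gamma$, which does give the displayed $1/p_0$ (the hypothesis on $\beta$ is exactly equivalent to $p_0\le 1$), while using $(p,2)$- instead of $(p,\infty)$-atoms changes nothing. The one step to correct is $\beta\ge n/2$: there is no $L^1\to L^2$ ``endpoint embedding'' at $\tau=n/\kappa$, since the Bessel kernel $G_{n/2}\notin L^2$ and Theorem~\ref{theo:Lq-L2} needs $q>1$, so at $\beta=n/2$ you must do as the paper does and apply the case $\beta'<n/2$ with $\beta'\uparrow n/2$, which only yields $p$ strictly above the frozen $p_0$; for $\beta>n/2$ strictly your idea can be rescued by factoring through $\mathfrak{B}^{-s}$ with $n/2<s\le\kappa\tau/2$, which does map $L^1$ to $L^2$.
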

\begin{figure}[htpb]
    \centering
    \begin{tikzpicture}
        \begin{axis}[
            width=7cm,
            height=6cm,
            xlabel={$p$},
            ylabel={$\gamma$},
            xmin=0.67, xmax=1,
            ymin=0, ymax=1.0,
            xtick={0.67, 0.75, 1},
            xticklabels={$\frac{n}{n+1}$,$\frac{Q_0}{Q_0+1}$,,1},
            ytick={0, 0.5, 0.75, 1},
            samples=50,
            grid=both,
            grid style={line width=.1pt, draw=gray!20},
            major grid style={line width=.2pt,draw=gray!50}
        ]
        
        \addplot[blue, thick, dashed, domain=0.751:1] {(x*(5-1)-4+1)/(x*(2+2/1)-2)};

        \addplot[blue, thick, dashed, domain=0.751:1] {(x*(5-1)-4+1)/(x*(2+2/1.5)-2)};

        \addplot[blue, thick, dashed, domain=0.751:1] {(x*(5-1)-4+1)/(x*(2+2/1.97)-2)};

        \addplot[black, thick, domain=0:1] {(4-1)*(2-2*x)/(2*(2-x))};

        \end{axis}
    \end{tikzpicture}
    \begin{tikzpicture}
        \begin{axis}[
            width=7cm,
            height=6cm,
            xlabel={$p$},
            ylabel={$\gamma$},
            xmin=0.666, xmax=1,
            ymin=0, ymax=1.0,
            xtick={0.666,1},
            xticklabels={$\frac{n}{n+1}$,1},
            ytick={0, 0.5, 0.75, 1},
            samples=50,
            grid=both,
            grid style={line width=.1pt, draw=gray!20},
            major grid style={line width=.2pt,draw=gray!50}
        ]
        
        \addplot[blue, thick, dashed, domain=0.667:1] {(x*(5-2)-4+2)/(x*(2+2/1)-2)};

        \addplot[blue, thick, dashed, domain=0.667:1] {(x*(5-2)-4+2)/(x*(2+2/1.5)-2)};

        \addplot[blue, thick, dashed, domain=0.667:1] {(x*(5-2)-4+2)/(x*(2+2/1.97)-2)};

        \addplot[black, thick, domain=0:1] {(4-2)*(2-2*x)/(2*(2-x))};

        \end{axis}
    \end{tikzpicture}
    \caption{Upper and lower bounds for the parameter $\gamma,$ a requirement for convergence, as a function of $p \in \left(\frac{n}{n+1}, 1\right)$ with $n=2$. The lowest lower bound (when $\beta \to n/2$) is shown in solid black, while the upper bounds for $\kappa=1, 3/2$ and as $\kappa\to 2$ are shown in dashed blue. The first figure considers the case $r_0=1$ and the second one the case $r_0=2$. In all cases the upper bound goes to $\kappa/2$ as $p\to 1.$}
    \label{fig:gamma-bounds}
\end{figure}
\begin{proof}
    Let $b$ be a $(p,\infty)$-atom supported in the ball $B(z,R)$. First assume that $R<1,$ and $\beta<n/2,$ then we may rewrite
    \begin{align*}
        \int_{\mathbb{R}^n} |\sigma(x, D)b(x)|^p\diff x&\le \int_{B_E(z,2R^\gamma)} |\sigma(x, D)b(x)|^p\diff x + \sum_j \int_{A_j(z,R^\gamma)} |\sigma(x, D)b(x)|^p\diff x\\
        & = I_1 + I_2,
    \end{align*}
    with 
    \begin{equation*}
        I:= \int_{B_E(z,2R^\gamma)} |\sigma(x, D)b(x)|^p\diff x, \quad I_2:=\sum_j \int_{A_j(z,R^\gamma)} |\sigma(x, D)b(x)|^p\diff x.
    \end{equation*}
    Let us define
    \begin{equation*}
        \frac{1}{q}=\frac{1}{2}+\frac{\beta}{n},
    \end{equation*}
    so that $\sigma(x, D)$ is bounded from $L^q$ to $L^2$ in view of Theorem \ref{theo:Lq-L2}. Now, let us estimate the first term using H\"older's inequality with exponent $2/p$ and the $L^q$-$L^2$ boundedness to obtain that
    \begin{align*}
        I_1 & \le  \left(\int_{B_E(z,2R^\gamma)} |\sigma(x, D)b(x)|^2\diff x
        \right)^{p/2} \left(\int_{B_E(z,2R^\gamma)}\diff x
        \right)^{(2-p)/2} \\
        & = \|\sigma(x, D)b\|_{L^2}^p |B_E(z, R^\gamma)|^{(2-p)/2} \\
        &\le C \|\sigma\|_{(l)}\|b\|_{L^q}^pR^{n\gamma(2-p)/2}\\
        &\le C \|\sigma\|_{(l)}\left(\int_{B(z,R)}R^{-Qq/p}\diff x
        \right)^{p/q}R^{n\gamma(2-p)/2} \\
        & \le C \|\sigma\|_{(l)} R^{-Q}R^{Qp/q} R^{n\gamma(2-p)/2} \\
        & = C \|\sigma\|_{(l)}R^{n\gamma(1-p/2)+Q(p/q-1)}.
    \end{align*}
    Since, $1/q=1/2+\beta/n$, we have that $I_1$ is bounded whenever
    \begin{equation}\label{eq:gamma-lower}
        \gamma \ge Q\frac{2-p(1+2\beta/n)}{n(2-p)}.
    \end{equation}
    This is a decreasing function of $p$ that reaches
    \begin{equation*}
        \gamma \ge Q\frac{1-2\beta/n}{n}
    \end{equation*}
    when $p=1$, thus, the lower bound on $\beta.$ On the other hand, we may use the cancellation property of $b(x)$ to obtain that
    \begin{align*}
        I_2 & \le \sum_j \int_{A_j(z,R^\gamma)} \left( \int_{B(z,R)} |k(x, y) - k(x,z)|\|b\|_{L^\infty}\diff y
        \right)^p\diff x\\
        & \le \sum_j \int_{A_j(z,R^\gamma)} \left( \int_{B(z,R)} |k(x, y) - k(x,z)|R^{-Q_0/p}\diff y
        \right)^p\diff x.
    \end{align*}
    Using H\"older's inequality with exponent $1/p$ and the kernel estimates from Theorem \ref{theo:kernel-estimate-dyadic}, we have that 
    \begin{align*}
        I_2  & \le\sum_j \left(\int_{B(z,R)} \int_{A_j(z,R^\gamma)}   |k(x, y) - k(x,z)|\diff x\diff y
        \right)^p \\
        & \quad \times\left(\int_{A_j(z,R^\gamma)} R^{-Q/(1-p)}\diff x
        \right)^{1-p} \\
        & \le C\|\sigma\|_{(l)}\sum_j 2^{-2jp/\kappa}R^{p(1-2\gamma/\kappa)}|B(z,R)|^p R^{-Q} |A_j(z,R^\gamma)|^{1-p} \\
        & \le C\|\sigma\|_{(l)}\sum_j2^{-2jp/\kappa}R^{p(1-2\gamma/\kappa)} R^{Qp}R^{-Q}2^{jn(1-p)}R^{n\gamma(1-p)} \\
        & \le C\|\sigma\|_{(l)} R^{-\gamma[p(n+2/\kappa)-n]+p(Q+1)-Q} \sum_j2^{-j [p(n+2/\kappa)-n] }.
    \end{align*}
    Since we require that $p(n+2/\kappa)-n>0, $ we have that 
    \begin{equation*}
        I_2 \le C\|\sigma\|_{(l)}R^{-\gamma[p(n+2/\kappa)-n]+p(Q+1)-Q},
    \end{equation*}
    and we must have that 
    \begin{equation}\label{eq:gamma-upper}
        \gamma \le \frac{p(Q+1)-Q}{p(n+2/\kappa)-n},
    \end{equation}
    which is an increasing function of $p$ and it reaches 
    \begin{equation*}
        \gamma \le \frac{\kappa}{2}
    \end{equation*}
    when $p=1.$ Thus, we must equal the right sides of (\ref{eq:gamma-lower}) and (\ref{eq:gamma-upper}) to optain the critical value of $p_0$ 
    \begin{equation*}
       Q\frac{2-p_0(1+2\beta/n)}{n(2-p_0)} =\frac{p_0(Q+1)-Q}{p_0(n+2/\kappa)-n},
    \end{equation*}
    which results in 
    \begin{equation*}
        \frac{1}{p_0}=\frac{1}{2} + \frac{\beta Q(1/n+\kappa/4) }{Q+\beta  Q\kappa /2 - n\kappa/2  }.
    \end{equation*}
    Now we follow with the case $R\ge1.$ To estimate $I_1$ we use H\"older's inequality with exponent $2/p$ and the $L^2$ continuity to obtain that 
    \begin{align*}
        I_1 & \le  \left(\int_{B_E(z,2R)} |\sigma(x, D)b(x)|^2\diff x
        \right)^{p/2} \left(\int_{B_E(z,2R)}\diff x
        \right)^{(2-p)/2} \\
        & = \|\sigma(x, D)b\|_{L^2}^p |B_E(z, R)|^{(2-p)/2} \\
        &\le C \|\sigma\|_{(l)}\|b\|_{L^2}^pR^{n(2-p)/2}\\
        &\le C \|\sigma\|_{(l)}\left(\int_{B(z,R)}|B(z,R)|^{-2/p}\diff x
        \right)^{p/2}R^{n(2-p)/2} \\
        & \le C \|\sigma\|_{(l)} R^{-Q}R^{Qp/2} R^{n(2-p)/2} \\
        & = C \|\sigma\|_{(l)}R^{(n-Q)(2-p)/2} \le C \|\sigma\|_{(l)},
    \end{align*}
    since $Q\ge n.$ For $I_2$, we use the cancellation property of $b(x)$ and the kernel estimates from Theorem \ref{theo:kernel-estimate-dyadic} as before to obtain that
    \begin{align*}
        I_2  & \le\sum_j \left(\int_{B(z,R)} \int_{A_j(z,R)}   |k(x, y) - k(x,z)|\diff x\diff y
        \right)^p \\
        & \quad \times\left(\int_{A_j(z,R)} R^{-Q/(1-p)}|\diff x|
        \right)^{1-p} \\
        & \le C\|\sigma\|_{(l)}\sum_j 2^{-jp}|B(z,R)|^p R^{-Q} |A_j(z,R)|^{1-p} \\
        & \le C\|\sigma\|_{(l)}\sum_j2^{-jp} R^{Qp}R^{-Q}2^{jn(1-p)}R^{n(1-p)} \\
        & \le C\|\sigma\|_{(l)} R^{(n-Q)(1-p)} \sum_j2^{-j [p-n(1-p)] }.
    \end{align*}
    This last term is bounded by a constant since $Q\ge n$ and $p(n+1)-n > 0.$ If $\beta\ge n/2,$ we may choose $\varepsilon>0$ so that $\beta -\varepsilon<n/2.$ Then, we apply the previous case and take $\varepsilon\to \beta-n/2.$ This completes the proof.
\end{proof}
        
\begin{remark}
    Notice that when $Q=n,$ we obtain the result obtained in \'Alvarez and Hounie \cite{alvarez-hounie} with $\rho=\kappa/2$ given by 
    \begin{equation*}
        \frac{1}{p_0} = \frac{1}{2} + \frac{\beta(2/\kappa+n/2)}{n(2/\kappa-1+\beta)}.
    \end{equation*}
\end{remark}

\bibliographystyle{amsplain}

\end{document}